\pgfplotsset{compat=newest}
\pgfplotsset{scaled x ticks=false} 
\pgfplotsset{
	small, 
	axis y line = left,
	axis x line = bottom,
	axis line style = {-latex, thick},
	xlabel style = {
		xshift = 14*\pgfkeysvalueof{/pgfplots/major tick length},
		yshift = 3*\pgfkeysvalueof{/pgfplots/major tick length},    
		anchor=north},  
	ylabel style = {
		yshift = -7*\pgfkeysvalueof{/pgfplots/major tick length},
		xshift = 15*\pgfkeysvalueof{/pgfplots/major tick length},
		rotate = -90,
		anchor=north},
} 
\pgfplotsset{
	axisStyle/.style={
		axis y line =left,
		axis x line =bottom,
		axis line style ={very thick},
	}
}
\newlength\figurewidth
\newcommand{\RNum}[1]{\uppercase\expandafter{\romannumeral #1\relax}}
\newcommand{\NN}{\mathbb{N}}
\newcommand{\RR}{\mathbb{R}}
\newcommand{\EE}{\mathbb{E}}
\newcommand{\dom}{\mathrm{dom}\ }
\newcommand{\ri}{\mathrm{ri}\ }
\DeclareMathOperator*{\argmin}{\arg\!\min}
\newcommand{%
	\scalebox{}{\input{}}  
}[2]{%
	\scalebox{#1}{\input{#2}}  
}
\newtheorem{thm}{Theorem}[section]
\newtheorem{defi}[thm]{Definition}
\newtheorem{prop}[thm]{Proposition}
\newtheorem{lem}[thm]{Lemma}
\newtheorem{bem}[thm]{Remark}
\newtheorem{as}{Assumption}
\newtheorem{bsp}[thm]{Example}
\newtheorem{cor}[thm]{Corollary}
\begin{document}
	
	\newcommand{\change}[2]{{\color{red}\sout{#1} {\color{ForestGreen} #2}}}
	
	\title{A Bregman-Kaczmarz method for nonlinear systems of equations}
	\author{
	Robert Gower
		\thanks{CCM, Flatiron Institute, Simons Foundation, \href{mailto:gowerrobert@gmail.com}{gowerrobert@gmail.com}} 
		\and 
	Dirk A. Lorenz
		\thanks{Institute of Analysis and Algebra, TU Braunschweig, \href{mailto:d.lorenz@tu-braunschweig.de}{d.lorenz@tu-braunschweig}}
		\thanks{Center for Industrial Mathematics, Fachbereich 3, University of Bremen, \href{mailto:d.lorenz@uni-bremen.de}{d.lorenz@uni-bremen.de}}
		\and
	Maximilian Winkler
		\thanks{Insitute of Analysis and Algebra, TU Braunschweig, \href{mailto:maximilian.winkler@tu-braunschweig.de}{maximilian.winkler@tu-braunschweig.de}}, 
		\thanks{Center for Industrial Mathematics, Fachbereich 3, University of Bremen, \href{mailto:maxwin@uni-bremen.de}{maxwin@uni-bremen.de}}
	} 
	\maketitle

	\begin{abstract}
		We propose a new randomized method for solving systems of nonlinear equations, which can find sparse solutions or solutions under certain simple constraints. The scheme only takes gradients of component functions and uses Bregman projections onto the solution space of a Newton equation.  
		In the special case of euclidean projections, the method is known as nonlinear Kaczmarz method. Furthermore if the component functions are nonnegative,
		we are in the setting of optimization under the interpolation assumption and the method reduces to SGD with the recently proposed stochastic Polyak step size. For general Bregman projections, our method is a stochastic mirror descent with a novel adaptive step size. We prove that in the convex setting
		each iteration of our method results in a smaller Bregman distance to exact solutions as compared to the standard Polyak step.
		Our generalization to Bregman projections comes with the price that a convex one-dimensional optimization problem needs to be solved in each iteration. This can typically be done with globalized Newton iterations.
		Convergence is proved in two classical settings of nonlinearity: for convex nonnegative functions and  locally for functions which fulfill the tangential cone condition.  
		Finally, we show examples in which the proposed method outperforms similar methods with the same memory requirements.
	\end{abstract}

	\noindent
	\textbf{AMS Classification:} 49M15, 90C53, 65Y20
	
	\noindent
	\textbf{Keywords:} Nonlinear systems, stochastic methods, randomized Kaczmarz, Bregman projections
	
	\section{Introduction}
	\label{sec:intro}
	
	
	
	We consider a constrained nonlinear system of equations
	\begin{align}
		\label{eqn:problem}
		f(x) = 0 \qquad \text{s.t. } x\in C,
	\end{align}
	where $f\colon D \subset\RR^d \to\RR^n $ is a nonlinear differentiable function and $C\subset D \subset \RR^d$ is a nonempty closed convex set. Let $S \subset C$ be the set of solutions of~\eqref{eqn:problem}. Our aim is to design an iterative method which approximates a solution of~\eqref{eqn:problem} and in each step uses first-order information of just a single component function $f_i$.

	The idea of our method is as follows. Given an appropriate convex function $\varphi\colon\RR^d\to \RR \cup \{+\infty\}$ with 
	\begin{align} \label{eq:domaincondition} 
		\overline{\dom\partial\varphi} = C, 
	\end{align}
	our method computes the \emph{Bregman projection} w.r.t. $\varphi$ onto the solution set of the local linearization of a component function $f_i$ around the current iterate $x_k$. 
	Here, the underlying distance is the \emph{Bregman distance} defined by
	\begin{align*}
		D_\varphi^{x^*}(x,y) = \varphi(y) - \varphi(x) - \langle x^*,y-x\rangle,
	\end{align*}
	where $x^*$ is a subgradient of $\varphi$ at $x$. 
	That is, the method we study is given by 
	\begin{align} \label{eq:ourmethodintro}
		x_{k+1} = &\arg\min_{x\in\RR^d} D_\varphi^{x_k^*}(x_k,x) \qquad \text{s.t. } x\in H_k,
	\end{align}
	where
	\begin{equation*}
		H_k := \{x\in\RR^d: f_{i_k}(x_k) + \langle \nabla f_{i_k}(x_k), x-x_k\rangle = 0\},
	\end{equation*} 
	where $i_k \in\{1,\ldots, n\}$ and $x_k^*$ is in the subgradient  $\partial \varphi(x_k).$
	Since one can show that Bregman projections are always contained in $\dom\partial\varphi$, the condition~\eqref{eq:domaincondition} guarantees that $x_k\in C$ holds for all $k$ and hence, if the $x_k$ converge, they converge to a point in $C$.
	In order for the Bregman projection $x_{k+1}$ to exist, we need that the hyperplanes $H_{k}$ have nonempty intersection with $\dom \varphi$. Proposition~\ref{prop:t_min_problem_abstract_hyperplane} below will show that the slightly stronger condition 
	\begin{align} 
		\label{eqn:Qualif_constraint_hyperplane}
		H_{k} \cap \ri \dom \varphi \neq\emptyset 
	\end{align}
	is sufficient for existence and uniqueness of the Bregman projection under regularity assumptions on $\varphi$. 
	If~\eqref{eqn:Qualif_constraint_hyperplane} is violated, we propose to compute a relaxed projection, which is always defined and inspired by the recently proposed \emph{mSPS method}~\cite{LLMO21}.

	\subsection{Related work and our contributions}
	\label{sec:related-work}
	
	\textbf{Nonlinear Kaczmarz method and Sparse Kaczmarz.}
	In the pioneering work by Stefan Kaczmarz~\cite{Kac37}, the idea of solving systems of equations by cycling through the separate equations and solving them incrementally was first executed on linear systems in finite dimensional spaces, an approach which is known henceforth as \emph{Kaczmarz method}. 
	In this conceptually simple method, an update is computed by selecting one equation of the system according to a rule that may be random, cyclic or adaptive, and computing an orthogonal projection onto its solution space, which is given by a hyperplane. 
	
	Recently, two completely different extensions of the Kaczmarz method have been developed. 
	One idea was to transfer the method to systems with nonlinear differentiable functions by considering its local linearizations: In each step $k$, an equation $i_k$ is chosen and the update $x_{k+1}$ is defined as the orthogonal projection
	\[ x_{k+1} = \argmin \|x-x_k\|_2^2 \quad \text{s.t.} \quad f_{i_k}(x_k)+\langle \nabla f_{i_k}(x_k),x-x_k\rangle = 0. \]
	It is easy to check that this update can be computed by
	\[ x_{k+1} = x_k - \frac{f_{i_k}(x_k)}{\|\nabla f_{i_k}(x_k)\|_2^2}\nabla f_{i_k}(x_k). \]
	This method was studied under the names \emph{Sketched Newton-Raphson}~\cite{GLY22} or \emph{Nonlinear Kaczmarz method}~\cite{WLBG22}. Convergence was shown for two kinds of mild nonlinearities, namely star convex functions~\cite{GLY22} and functions which obey a local tangential cone condition~\cite{WLBG22}.
	
	A different kind of extension of the Kaczmarz method has been proposed by~\cite{LSW14}. Here, the notion of projection was replaced by the (more general) Bregman projection, giving rise to the `sparse' Kaczmarz method, which can find sparse solutions of the system. The method has been further extended to inconsistent systems~\cite{LSTW22}, accelerated by block averaging~\cite{LT22} and investigated as a regularization method in Banach spaces~\cite{JLZ23}. But so far only linear systems have been addressed. 
	
	The present article unifies these two generalizations, that is, we study the case of nonlinearity and general Bregman projections onto linearizations and derive convergence rates in the two aforementioned nonlinear settings. We also demonstrate that instead of sparsity, the proposed method is able to handle simple constraints such as simplex constraints as well. 
	
	\textbf{Stochastic Polyak step size (SPS).} 
	One popular method for solving the finite-sum problem $\min \frac1n\sum_{i=1}^n \ell_i(x)$ is stochastic gradient descent (SGD), which is defined by the update $x_{k+1} = x_k - \gamma_k\nabla\ell_{i_k}(x_k)$. It is still a challenging question if there exist good choices of step sizes which are adaptive in the sense that no hyperparameter tuning is necessary. In this context, the \emph{stochastic Polyak step size} (SPS) 
	\begin{align}
		\label{eqn:SPS}
		\gamma_k = \frac{\ell_{i_k}(x_k)-\hat \ell_{i_k}}{c\cdot\|\nabla \ell_{i_k}(x_k)\|_2^2}
	\end{align}
	was proposed in~\cite{LVHL21}, where $c>0$ is a fixed constant and $\hat \ell_i = \inf \ell_i$. It was shown that the iterates of this method converge for convex lower bounded functions $f_i$ for which the \emph{interpolation} condition holds, meaning that there exists $\hat x\in\RR^d$ with 
	$
	\ell_i(\hat x) = \hat \ell_i
	$
	for all $i=1,...,n$. This assumption is strong, but can be fulfilled e.g. by modern machine learning applications such as non-parametric regression or over-parametrized deep neural networks~\cite{BBM18,BH21}. We cover these assumptions with our framework as a special case by requiring that the functions $f_i$ in~\eqref{eqn:problem} are nonnegative, which is clear by setting $f_i=\ell_i-\hat\ell_i$. The SPS method applied to $\ell_1,...,\ell_n$ then coincides with the Nonlinear Kaczmarz method applied to $f_1,...,f_n$.

	\textbf{Mirror Descent and SPS.} 
	For incorporating additional constraints or attraction to sparse solutions into SGD, a well-known alternative to projected SGD is the \emph{stochastic mirror descent} method (SMD)~\cite{AHL21,NY83,BBMZ17}, which is defined by the update
	\[ x_{k+1} \in \argmin_{x\in\RR^d} \ \gamma_k \langle \nabla f_{i_k}(x_k),x-x_k\rangle + D_\varphi^{x_k^*}(x_k,x). \]
	Here, $\varphi$ is a convex function with additional properties which will be refined later on, which is then called the \emph{distance generating function} (DGF), $x_k^*$ is a subgradient of $\varphi$ at $x_k$ and $D_\varphi$ is the Bregman distance associated to $\varphi$. 
	We demonstrate that our proposed method can be reinterpreted as mirror descent with a novel adaptive step size in case that the $f_i$ are nonnegative. Moreover, for $\varphi(x)=\frac12\|x\|_2^2$, we obtain back the SGD method with the stochastic Polyak step size. For general $\varphi$, computing the step size requires the solution of a convex one-dimensional minimization problem. This is a similar situation as in the update of the stochastic dual coordinate ascent method~\cite{shalev2013stochastic}, a popular stochastic variance reduced method for minimizing regularized general linear models.
	
	The two recent independent works~\cite{LLMO21} and~\cite{CLY22} propose to use the stochastic Polyak step size from SGD in mirror descent. This update has the advantage that it is relatively cheap to compute. However, we prove that for convex functions, our proposed method
	takes bigger steps in terms of Bregman distance towards the solution  of~\eqref{eqn:problem}.
	We generalize the step size from~\cite{LLMO21} to the case in which the functions $f_i$ are not necessarily nonnegative and employ this update as a \emph{relaxed projection} whenever our iteration is not defined. We compare our proposed method with the method which always performs relaxed projections in our convergence analysis and experiments. As an additional contribution, we improve the analysis for the method in~\cite{LLMO21} for the case of smooth strongly convex functions $f_i$ (Theorem~\ref{thm:Convergence_nonnegative_convex_Li_smooth_mu_sc}).
	
	Finally,
	our method is by definition scaling-invariant in the sense that a multiplicative change $\varphi \mapsto \alpha\varphi$ of the DGF $\varphi$ with a constant $\alpha>0$ does not affect the method. To the best of our knowledge, this is the first mirror descent method which has this property.
	
	\textbf{Bregman projection methods.}
	The idea of using Bregman projections algorithmically dates back to the seminal paper~\cite{Bregman67}, which proposed to solve the \emph{feasibility problem}
	\[ \text{find}\quad \hat x \in \bigcap_{i=1}^n C_i \]
	for convex sets $C_i$ by iterated Bregman projections onto the sets $C_i$. This idea initiated an active line of research~\cite{AB97,BBC03,BB97,BC03,BR06,CEH01,CL81,CR96,KS21,PRR19} with applications in fields such as matrix theory~\cite{DT08}, image processing~\cite{CE89,Ctin91,BGOWX05} and optimal transport~\cite{BCCP15, HLJ19}. We can view problem~\eqref{eqn:problem} as a feasibility problem by setting $C_i=\{x\mid f_i(x)=0\}$. Our approach to compute Bregman projections onto linearizations has already been proposed for the case of convex inequalities $C_i=\{x\mid f_i(x)\leq 0\}$ under the name \emph{outer Bregman projections}~\cite{BIZ03,BR01}. Convergence of this method was studied in general Banach spaces. Obviously, the two problems coincide for convex nonnegative functions $f_i$. However, to the best of our knowledge, convergence rates have been given recently only in the case that the $C_i$ are hyperplanes~\cite{KS21}. In this paper, we derive rates in the space $\RR^d$. Also, we extend our analysis to the nonconvex setting for equality constraints.
	
	\textbf{Bregman-Landweber methods.}
	There are a couple of works in inverse problems, typically studied in Banach spaces, which already incorporate Bregman projections into first-order methods with the aim of finding sparse solutions. Bregman projections were combined with the nonlinear Landweber iteration the first time in~\cite{LKS06}. Later,~\cite{BH12} employed Bregman projections for $L_1$- and TV-regularization. A different nonlinear Landweber iteration with Bregman projections for sparse solutions of inverse problems was investigated in~\cite{MS16}. All of these methods use the full Jacobian $Df(x)$ in each iteration. In~\cite{Jin16,JW13}, a deterministic Kaczmarz method incorporating convex penalties was proposed which performs a similar mirror update as our method, but with a different step size which does not originate from a Bregman projection. The apparently closest related method to our proposed one was recently suggested in~\cite{CGHT21}, where the step size was calculated as the solution of a quite similar optimization problem, which is still different and also does not come with the motivation of a Bregman projection. 
	
	\textbf{Sparse and Bregman-Newton methods.} 
	Finally, since our proposed method can be seen as a stochastic first-order Newton iteration, we briefly point out that a link of Newton's method to topics like Bregman distances and sparsity has already been established in the literature. Iusem and Solodov~\cite{IS97} introduced a regularization of Newton's method by a Bregman distance. Nesterov and Doikov~\cite{DN21} continued this work by introducing an additional nonsmooth convex regularizer. Polyak and Tremba~\cite{PT20} proposed a sparse Newton method which solves a minimum norm problem subject to the full Newton equation in each iteration, and presented an application in control theory~\cite{PT20_sparse}.

	\subsection{Notation}
	For a set $S\subset\RR^d$, we write its interior as $S^\circ$, its closure as $\overline{S}$ and its relative interior as $\mathrm{ri}(S)$. The Cartesian product of sets $S_i\subset\RR^d$, $i=1,...,m$, is written as $\bigtimes_{i=1}^m S_i$. The set $\mathrm{span}(S)$ is the linear space generated by all elements of $S$. We denote by $\mathbbm{1}_d$ the vector in $\RR^d$ with constant entries $1$. For two vectors $x,y\in\RR^d$, we express the componentwise (Hadamard) product as $x\cdot y$ and the componentwise logarithm and exponential as $\log(x)$ and $\exp(x)$. For a given norm $\|\cdot\|$ on $\RR^d$, by $\|\cdot\|_*$ we denote the corresponding \emph{dual norm}, which is given as
	\[ \|x\|_* = \sup_{\|y\|=1} \langle x,y\rangle, \qquad x\in\RR^d. \]

	\section{Basic notions and assumptions} 
	
	We collect some basic notions and results as well as our standing assumptions for problem~\eqref{alg:NBK}.
	
	\subsection{Convex analysis and standing assumptions}
	\label{sec:cvx-ana-basic-assumption}
	Let $\varphi\colon\RR^d\to\overline\RR :=\RR\cup\{+\infty\}$ be convex with 
	\[ \dom\varphi = \{x\in\RR^d: \varphi(x)<\infty\} \neq\emptyset. \]
	We also assume that $\varphi$ is \emph{lower semicontinuous}, i.e.
	$\varphi(x) \leq \liminf_{y\to x} \varphi(y)$
	holds for all $x\in\RR^d$, and \emph{supercoercive}, meaning that
	\[ \lim_{\|x\|\to\infty} \frac{\varphi(x)}{\|x\|} = +\infty. \]
	The \emph{subdifferential} at a point $x\in\dom\varphi$ is defined as 
	\[ \partial\varphi(x) = \big\{ x^*\in\RR^d: \varphi(x) + \langle x^*,y-x\rangle \leq \varphi(y) \quad \text{for all } y\in \dom\varphi\big\}. \]
	An element $x^*\in\partial\varphi(x)$ is called a \emph{subgradient} of $\varphi$ at $x$.
	The set of all points~$x$ with $\partial\varphi(x)\neq\emptyset$ is denoted by $\dom\partial\varphi$.  
	Note that the relative interior of $\dom\varphi$ is a convex set, while $\dom\partial\varphi$ may not be convex, for a counterexample see~\cite[p.218]{Rockafellar70}. 
	In general, convexity of $\varphi$ guarantees the inclusions 
	$\ri\dom\varphi \subset \dom\partial\varphi\subset \dom\varphi$.
	For later purposes, we require that
	$ \dom\partial\varphi = \ri\dom\varphi, $
	which will be fulfilled in all our examples.
	We further assume that $\varphi$ is \emph{essentially strictly convex}, i.e. strictly convex on $\ri\dom\varphi$. (In general, this property only means strict convexity on every convex subset of $\dom\partial\varphi$.). 
	The convex conjugate (or Fenchel-Moreau-conjugate) of $\varphi$ is defined by \[ \varphi^*(x^*) = \sup_{x\in\RR^d} \langle x^*,x\rangle - \varphi(x), \qquad x^*\in\RR^d. \] 
	The function $\varphi^*$ is convex and lower semicontinuous. Moreover, the essential strict convexity and supercoercivity imply that $\dom\varphi^*=\RR^d$ and $\varphi^*$ is differentiable, since $\varphi$ is essentially strictly convex and supercoercive, see~\cite[Proposition 14.15]{Bauschke11} and~\cite[Theorem 26.3]{Rockafellar70}.
	
	The \emph{Bregman distance} $D_\varphi^{x^*}(x,y)$ between $x,y\in\dom\varphi$ with respect to $\varphi$ and a subgradient $x^*\in\partial \varphi(x)$ is defined as
	\[ D_\varphi^{x^*}(x,y) = \varphi(y) - \varphi(x) - \langle x^*,y-x\rangle. \]
	Using Fenchel's equality $\varphi^*(x^*)=\langle x^*,x\rangle - \varphi(x)$ for $x^*\in\partial\varphi(x)$,
	one can rewrite the Bregman distance with the conjugate function as
	\begin{align} 
		\label{eqn:BregDist_with_conjugate}
		D_\varphi^{x^*}(x,y) = \varphi^*(x^*) - \langle x^*,y\rangle + \varphi(y). \end{align} 
	
	If $\varphi$ is differentiable at $x$, then the subdifferential $\partial\varphi(x)$ contains the single element $\nabla\varphi(x)$ and we can write
	\[ D_\varphi(x,y) := D^{\nabla\varphi(x)}_\varphi(x,y) = \varphi(y) - \varphi(x) - \langle \nabla\varphi(x),y-x\rangle. 
	\]

	The function $\varphi$ is called $\sigma$-\emph{strongly} convex w.r.t. a norm $\|\cdot\|$ for some $\sigma>0$, if for all $x,y\in\dom \partial\varphi$ it holds that $\frac{\sigma}{2}\|x-y\|^2\leq D_\varphi^{x^*}(x,y)$.

	In conclusion, we require the following standing assumptions for problem~\eqref{eqn:problem}:
	\begin{as}
		\label{as:Standard_assumptions}
		\mbox{}
		\begin{enumerate}[(i)]
			\item The set $C$ is nonempty, convex and closed.
			\item It holds that $\varphi\colon\RR^d\to\overline\RR$ is essentially strictly convex, lower semicontinuous and supercoercive. 
			\item The function $\varphi$ fulfills that $\overline{\dom\partial\varphi} = C$ and $\dom\partial\varphi = \ri\dom\varphi$. 
			\item For each $x\in\dom\varphi$ and each sequence $x_k\in\dom \partial\varphi$ with $x_k^*\in\partial\varphi(x_k)$ and $x_k\to x$ it holds that $D_\varphi^{x_k^*}(x_k,x)\to 0$.
			\item The function $f\colon D\to\RR^n$ is continuously differentiable with $D\supset C$.
			\item The set of solutions $S$ of~\eqref{eqn:problem} is non-empty, that is $S := C\cap f^{-1}(0)\neq \emptyset$.
		\end{enumerate}
	\end{as}

	\subsection{Bregman projections}
	\label{sec:bregman-projection}
	
	
	
	

	\begin{defi} Let $E\subset\RR^d$ be a nonempty convex set, $x\in\dom\partial\varphi$ and $x^*\in\partial\varphi(x)$. Assume that $E\cap\dom\varphi\neq\emptyset$. The \emph{Bregman projection} of $x$ onto~$E$ with respect to $\varphi$ and $x^*$ is the point $\Pi_{\varphi, E}^{x^*}(x)\in E\cap\dom\varphi$ such that 
		\begin{align*}
			D_\varphi^{x^*}\big(x, \Pi_{\varphi, E}^{x^*}(x)\big) = \min_{y\in E} D_\varphi^{x^*}(x,y). 
		\end{align*}
	\end{defi}
	
	Existence and uniqueness of the Bregman projection is guaranteed if $E\cap\dom\varphi\neq\emptyset$ by our standing assumptions due to the fact that the function $y\mapsto D_\varphi^{x^*}\big(x, y\big)$ is lower bounded by zero, coercive, lower semicontinuous and strictly convex. 
	For the standard quadratic $\varphi=\frac{1}{2}\|\cdot\|_2^2$, the Bregman projection is just the orthogonal projection.
	Note that if $E\cap\dom\varphi=\emptyset$, then for all $y\in E$ it holds that $D_\varphi^{x^*}(x,y)=+\infty$. 
	
	The Bregman projection can be characterized by variational inequalities, as the following lemma shows.
	
	\begin{lem}[\cite{LSW14}]
		\label{lem:BregmanProj_VarInequality}
		A point $z\in E$ is the Bregman projection of $x$ onto $E$ with respect to $\varphi$ and $x^*\in\partial\varphi(x)$ if and only if there exists $z^*\in\partial \varphi(z)$ such that one of the following conditions is fulfilled:
		\begin{enumerate}[(i)]
			\item $\langle z^*-x^*,z-y\rangle \leq 0 \qquad \text{for all } y\in E,$
			\item $D_\varphi^{z^*}(z,y) \leq D_\varphi^{x^*}(x,y) - D_\varphi^{x^*}(x,z) \qquad \text{for all } y\in E.$
		\end{enumerate}
	\end{lem}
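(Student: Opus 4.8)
The plan is to separate the statement into two independent pieces: a purely algebraic equivalence of (i) and (ii), and the actual optimality characterization. First I would record, for any $z\in\dom\varphi$ and any $z^*\in\partial\varphi(z)$, the three–point identity
\[
  D_\varphi^{x^*}(x,y)-D_\varphi^{x^*}(x,z)-D_\varphi^{z^*}(z,y)=\langle z^*-x^*,\,y-z\rangle,\qquad y\in\dom\varphi,
\]
which comes out by expanding the three Bregman distances and cancelling all $\varphi$–terms. Since $\langle z^*-x^*,y-z\rangle=-\langle z^*-x^*,z-y\rangle$, the right–hand side is $\ge 0$ for all $y\in E\cap\dom\varphi$ exactly when (i) holds there, whereas the rearranged inequality $D_\varphi^{z^*}(z,y)\le D_\varphi^{x^*}(x,y)-D_\varphi^{x^*}(x,z)$ is precisely (ii); for $y\in E\setminus\dom\varphi$ condition (ii) is automatic (both sides are $+\infty$) and (i) follows by a limiting argument, since points of $E\cap\dom\varphi$ are dense in $E$ under the qualification relevant here. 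Thus (i) $\Leftrightarrow$ (ii) for a fixed pair $(z,z^*)$, and it remains to show that $z$ is the Bregman projection of $x$ onto $E$ if and only if some $z^*\in\partial\varphi(z)$ makes (i) hold.

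For the ``if'' direction, assume $z^*\in\partial\varphi(z)$ satisfies (ii). From $D_\varphi^{z^*}(z,y)\ge 0$ one gets $D_\varphi^{x^*}(x,z)\le D_\varphi^{x^*}(x,y)$ for every $y\in E$; evaluating at any $y\in E\cap\dom\varphi$ (nonempty by hypothesis) shows $D_\varphi^{x^*}(x,z)<\infty$, so $z\in E\cap\dom\varphi$ and $z$ minimizes $y\mapsto D_\varphi^{x^*}(x,y)$ over $E$. By the uniqueness of the Bregman projection already recorded after the definition, $z=\Pi_{\varphi,E}^{x^*}(x)$.

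For the ``only if'' direction, suppose $z=\Pi_{\varphi,E}^{x^*}(x)$. Using the conjugate form \eqref{eqn:BregDist_with_conjugate}, minimizing $D_\varphi^{x^*}(x,\cdot)$ over $E$ is the same as minimizing the proper convex lower semicontinuous function $y\mapsto\varphi(y)-\langle x^*,y\rangle+\iota_E(y)$ over $\RR^d$, where $\iota_E$ is the indicator of $E$. Fermat's rule gives $0\in\partial\bigl(\varphi-\langle x^*,\cdot\rangle+\iota_E\bigr)(z)$, and the Moreau--Rockafellar sum rule rewrites this as $0\in\partial\varphi(z)-x^*+N_E(z)$, with $N_E(z)$ the normal cone of $E$ at $z$. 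In particular this forces $\partial\varphi(z)\ne\emptyset$, so $z\in\dom\partial\varphi$, and yields $z^*\in\partial\varphi(z)$ with $x^*-z^*\in N_E(z)$, i.e.\ $\langle x^*-z^*,y-z\rangle\le 0$ for all $y\in E$, which is (i).

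The step I expect to require the most care is the use of the sum rule in the last paragraph: it needs a constraint qualification coupling $E$ and $\dom\varphi$ — generically $\ri E\cap\ri\dom\varphi\ne\emptyset$, relaxing to $E\cap\ri\dom\varphi\ne\emptyset$ when $E$ is polyhedral, e.g.\ a hyperplane, which is exactly \eqref{eqn:Qualif_constraint_hyperplane} in the situations where the lemma is applied. Under the standing assumptions on $\varphi$ (essential strict convexity, supercoercivity, $\dom\partial\varphi=\ri\dom\varphi$) and for the hyperplanes $H_k$ arising in the algorithm this qualification is available, so the argument closes. If one wanted to avoid the abstract sum rule, the alternative is to test the minimality of $z$ directly against the segment points $z+t(y-z)\in E$ for $y\in E\cap\dom\varphi$, let $t\downarrow 0$ in the resulting difference quotients to obtain $\langle x^*,y-z\rangle\le\varphi'(z;y-z)$ for all such $y$, and then extract the subgradient $z^*$ by a separation argument; this reproduces the same conclusion and ultimately relies on the same qualification.
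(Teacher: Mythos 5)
The paper itself gives no proof of this lemma --- it is quoted from \cite{LSW14} --- so your proposal is judged against the standard argument there, and it matches it in all essentials: the three-point identity $D_\varphi^{x^*}(x,y)-D_\varphi^{x^*}(x,z)-D_\varphi^{z^*}(z,y)=\langle z^*-x^*,y-z\rangle$ (which you compute correctly), the observation that $D_\varphi^{z^*}(z,y)\ge 0$ turns (ii) into minimality of $z$, and Fermat's rule plus the normal-cone decomposition for the converse. The logic closes as a cycle (i) $\Rightarrow$ (ii) $\Rightarrow$ ($z$ is the projection) $\Rightarrow$ (i), and each arrow in that cycle is sound in your write-up.

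One remark you make along the way is not correct and should be dropped: the claim that for $y\in E\setminus\dom\varphi$ condition (i) ``follows by a limiting argument, since points of $E\cap\dom\varphi$ are dense in $E$.'' That density fails in general (take $E$ a halfspace and $\dom\varphi$ a simplex), and indeed (ii) alone does not directly imply (i) at such $y$, since (ii) is vacuous there while (i) is not. This does not damage the proof, because you never need the direct implication (ii) $\Rightarrow$ (i): the cyclic chain above already yields the full equivalence, with (i) for \emph{all} $y\in E$ recovered from the normal-cone characterization of the projection rather than from (ii). I would simply delete the density sentence and state the equivalence via the cycle. Your closing caveat about the Moreau--Rockafellar sum rule is well placed: the ``only if'' direction genuinely needs a qualification such as $\ri E\cap\ri\dom\varphi\neq\emptyset$ (or $E\cap\ri\dom\varphi\neq\emptyset$ for polyhedral $E$), which the lemma's statement leaves implicit but which is exactly condition~\eqref{eqn:Qualif_constraint_hyperplane} in every situation where the paper invokes the lemma.
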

	
	We consider Bregman projections onto hyperplanes 
	\[ H(\alpha,\beta):= \{x\in\RR^d: \langle \alpha,x\rangle = \beta\}, \qquad \alpha\in\RR^d, \ \beta\in\RR, \]
	and halfspaces
	\begin{align*}
		H^\leq(\alpha,\beta):= \{x\in\RR^d: \langle \alpha,x\rangle \leq \beta\}, \qquad \alpha\in\RR^d, \ \beta\in\RR,
	\end{align*}
	and analoguously we define $H^\geq(\alpha,\beta)$.
	
	The following proposition shows that the Bregman projection onto a hyperplane can be computed by solving a one-dimensional dual problem under a qualification constraint. We formulate this one-dimensional dual problem under slightly more general assumptions than previous versions, e.g. we neither assume smoothness of $\varphi$ (as e.g.~\cite{BB97,BC03, Bregman67, CL81, DT08}) nor strong convexity of $\varphi$ (as in~\cite{LSW14}).

	\begin{prop}
		\label{prop:t_min_problem_abstract_hyperplane}
		Let $\varphi$ fulfill Assumption~\ref{as:Standard_assumptions}(ii). Let $\alpha\in\RR^d\setminus\{0\}$ and $\beta\in\RR$ such that 
		\[ H(\alpha,\beta)\cap \ri\dom \varphi \neq\emptyset. \]
		Then, for all $x\in\dom\partial\varphi$ and $x^*\in\partial\varphi(x)$, the Bregman projection $\Pi_{\varphi, H(\alpha,\beta)}^{x^*}(x)$ exists and is unique. Moreover,  the Bregman projection is given by 
		\[ x_+:=\Pi_{\varphi, H(\alpha,\beta)}^{x^*}(x) = \nabla\varphi^*(x_+^*), \]
		where $x_+^* = x^* - \hat t\alpha \in \partial\varphi(x_+)$ and
		$\hat t$ is a solution to 
		\begin{align}
			\label{eqn:t_min_problem_abstract_hyperplane}
			\min_{t\in\RR} \varphi^*(x^*-t\alpha) + \beta t.
		\end{align}
	\end{prop}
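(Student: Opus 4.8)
The plan is to reformulate the projection problem through the conjugate, settle existence and uniqueness directly, and then read the claimed representation off the first-order optimality condition, whose Lagrange multiplier turns out to be precisely a minimizer of~\eqref{eqn:t_min_problem_abstract_hyperplane}. By~\eqref{eqn:BregDist_with_conjugate}, for $x\in\dom\partial\varphi$ and $x^*\in\partial\varphi(x)$ one has $D_\varphi^{x^*}(x,y)=\varphi^*(x^*)-\langle x^*,y\rangle+\varphi(y)$, so minimizing $D_\varphi^{x^*}(x,\cdot)$ over $H(\alpha,\beta)$ amounts to minimizing $p(y):=\varphi(y)-\langle x^*,y\rangle$ over $H(\alpha,\beta)$. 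Here $p$ is proper, since $\dom\varphi$ meets $H(\alpha,\beta)$ by hypothesis, lower semicontinuous, and coercive, because the supercoercive $\varphi$ dominates the linear term; as $H(\alpha,\beta)$ is closed, $p+\iota_{H(\alpha,\beta)}$ (with $\iota_{H(\alpha,\beta)}$ the indicator of $H(\alpha,\beta)$) has compact sublevel sets, so a minimizer $x_+$ exists. If $y_1\neq y_2$ both minimized, the whole segment $[y_1,y_2]$, which lies in $\dom\varphi$, would consist of minimizers; applying the optimality condition derived below to each of its points would then show $[y_1,y_2]\subset\dom\partial\varphi$, while $\varphi$ is affine on that segment --- contradicting essential strict convexity. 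Hence $x_+=\Pi_{\varphi,H(\alpha,\beta)}^{x^*}(x)$ is well-defined.

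I then derive the optimality condition. Since $H(\alpha,\beta)$ is affine, $\ri\dom\iota_{H(\alpha,\beta)}=H(\alpha,\beta)$, so the hypothesis gives $\ri\dom p\cap\ri\dom\iota_{H(\alpha,\beta)}=\ri\dom\varphi\cap H(\alpha,\beta)\neq\emptyset$, and the subdifferential sum rule applies at the minimizer:
\[ 0\in\partial\bigl(p+\iota_{H(\alpha,\beta)}\bigr)(x_+)=\partial\varphi(x_+)-x^*+N_{H(\alpha,\beta)}(x_+),\qquad N_{H(\alpha,\beta)}(x_+)=\mathrm{span}(\alpha). \]
Hence there is $\hat t\in\RR$ with $x_+^*:=x^*-\hat t\alpha\in\partial\varphi(x_+)$. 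Because Assumption~\ref{as:Standard_assumptions}(ii) makes $\varphi^*$ finite and differentiable on all of $\RR^d$, the equivalence $x_+^*\in\partial\varphi(x_+)\Leftrightarrow x_+\in\partial\varphi^*(x_+^*)=\{\nabla\varphi^*(x_+^*)\}$ gives $x_+=\nabla\varphi^*(x^*-\hat t\alpha)$.

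It remains to identify $\hat t$. Set $h(t):=\varphi^*(x^*-t\alpha)+\beta t$; it is finite, convex and differentiable on $\RR$ with $h'(t)=\beta-\langle\alpha,\nabla\varphi^*(x^*-t\alpha)\rangle$. Since $x_+\in H(\alpha,\beta)$, we have $\langle\alpha,x_+\rangle=\beta$, i.e. $h'(\hat t)=0$, so by convexity $\hat t$ solves~\eqref{eqn:t_min_problem_abstract_hyperplane}. Conversely, any solution $t$ of~\eqref{eqn:t_min_problem_abstract_hyperplane} satisfies $h'(t)=0$, so $z:=\nabla\varphi^*(x^*-t\alpha)$ lies in $H(\alpha,\beta)$ and satisfies $x^*-t\alpha\in\partial\varphi(z)$; as $z-y\perp\alpha$ for every $y\in H(\alpha,\beta)$, the variational inequality of Lemma~\ref{lem:BregmanProj_VarInequality}(i) holds with $z^*=x^*-t\alpha$, so $z$ is the Bregman projection, and by uniqueness $z=x_+$. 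This proves the stated formula.

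The main obstacle is handling the constraint qualification $H(\alpha,\beta)\cap\ri\dom\varphi\neq\emptyset$: it is what keeps $p+\iota_{H(\alpha,\beta)}$ proper and, more importantly, what licenses the subdifferential sum rule producing the multiplier $\hat t$ --- without a qualification of this type the normal-cone decomposition, and hence the representation of $x_+$, can fail. A secondary technical point, caused by assuming only Assumption~\ref{as:Standard_assumptions}(ii), is the uniqueness argument, where one must first check that a minimizing segment lies in $\dom\partial\varphi$ before invoking essential strict convexity.
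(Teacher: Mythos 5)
Your proof is correct and follows essentially the same route as the paper's: Fermat's rule plus the subdifferential sum rule at the minimizer (the paper invokes polyhedrality of $H(\alpha,\beta)$ where you use the relative-interior qualification, both licensed by the hypothesis), the normal cone $\mathrm{span}(\alpha)$ producing the multiplier $\hat t$, inversion via $\nabla\varphi^*$, and identification of $\hat t$ through the constraint $\langle\alpha,x_+\rangle=\beta$. You additionally spell out existence and uniqueness (which the paper delegates to its earlier remark on Bregman projections) and prove the useful converse that \emph{every} minimizer of~\eqref{eqn:t_min_problem_abstract_hyperplane} yields the projection; both additions are sound and the uniqueness argument via $\dom\partial\varphi$ is in fact slightly more careful than the paper's appeal to strict convexity.
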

	
	\begin{proof}
		The assumptions guarantee that $\varphi^*$ is finite and differentiable on the full space $\RR^d$. We already know that the Bregman projection $x_+$ exists and is unique. Fermat's condition applied to the projection problem $\min_{y\in H(\alpha,\beta)} D_\varphi^{x^*}(x,y)$ states that
		\[ 0 \in \partial \big(D_\varphi^{x^*}(x,\cdot) + \iota_{H(\alpha,\beta)} \big)(x_+), \]
		where the indicator function $\iota_M\colon\RR^d\to\overline\RR$ is defined by 
		\[ \iota_M(x) = \begin{cases}
			0, & x\in M, \\
			+\infty, & \text{otherwise.} 
		\end{cases} \]
		Applying subdifferential calculus~\cite[Theorem 23.8]{Rockafellar70}, where we make use of the fact that $H(\alpha,\beta)$ is a polyhedral set, we conclude that $x_+\in\dom\partial\varphi$ and
		\[ 0\in \partial\varphi(x_+) - x^* + \mathrm{span}(\{\alpha\}), \]
		where we used the fact that it holds $\partial\iota_{H(\alpha,\beta)} = \mathrm{span}(\{\alpha\})$ on $H(\alpha,\beta)$.
		Using subgradient inversion $(\partial\varphi)^{-1} = \nabla\varphi^*$, we arrive at the identity
		\[ x_+ = \nabla\varphi^*(x^*-\hat t\alpha) \]
		with some $\hat t\in\RR$. Inserting this equation into the constraint $\langle x_+,\alpha\rangle = \beta$, we conclude that $\hat t$ minimizes~\eqref{eqn:t_min_problem_abstract_hyperplane}.
	\end{proof}

	\section{Realizations of the method}
	\label{sec:Realizations}
	
	To solve problem~\eqref{eqn:problem}, we propose the following method. In each step, we randomly pick a component equation $f_{i_k}(x)=0$ and consider the set of zeros of its linearization around the current iterate $x_k$. This set is just the hyperplane
	\begin{align*}
		H_k := \big\{x\in\RR^d: f_{i_k}(x_k) + \langle \nabla f_{i_k}(x_k), x-x_k\rangle = 0\big\} = H(\nabla f_{i_k}(x_k), \beta_k),
	\end{align*}
	where
	\begin{align}
		\label{eqn:beta_k}
		\beta_k = \langle \nabla f_{i_k}(x_k),x_k\rangle - f_{i_k}(x_k).
	\end{align}
	For later purposes, we also consider the halfspace
	\[ H_k^\leq := \{ x\in\RR^d: f_{i_k}(x_k) + \langle f_{i_k}(x_k),x-x_k\rangle \leq 0 \}. \]
	As the update $x_{k+1}$, we now propose to take the Bregman projection of $x_k$ onto the set $H_k$ using Proposition~\ref{prop:t_min_problem_abstract_hyperplane}, which is possible if
	\begin{align}
		\label{eqn:Condition_hyperplane_nonempty_intersection}
		H_k \cap \dom\partial\varphi\neq\emptyset.
	\end{align}
	The update is then given by $x_{k+1}^* = x_k^*-t_{k,\varphi}\nabla f_{i_k}(x_k)$ and $x_{k+1}=\nabla\varphi^*(x_{k+1}^*)$ with
	\begin{align}
		\label{eqn:BregProj_stepsize}
		t_{k,\varphi} \in \argmin_{t\in\RR} \varphi^*(x_k^*-t\nabla f_{i_k}(x_k)) + \beta_k t.
	\end{align}
	Note that, although the Bregman projection $x_{k+1}$ is unique, $t_{k,\varphi}$ might not be unique. If~\eqref{eqn:Condition_hyperplane_nonempty_intersection} is not fulfilled, we define an update inspired from~\cite{LLMO21} by setting $x_{k+1} = \nabla\varphi^*(x_k^*-t_{k,\sigma}\nabla f_{i_k}(x_k))$
	with the Polyak-like step size
	\footnote{The typical setting in convergence analysis will be that $\varphi$ is $\sigma$-strongly convex with respect to a norm $\|\cdot\|$, and $\|\cdot\|_*$ will be its dual norm.}
	\begin{align}
		\label{eqn:mSPS_like_stepsize}
		t_{k,\sigma} = \sigma \frac{f_{i_k}(x_k)}{\|\nabla f_{i_k}(x_k)\|_*^2}
	\end{align}
	with some norm $\|\cdot\|_*$ and some constant $\sigma>0$.
	We will refer to the resulting update as the \emph{relaxed projection}. We note that it is always defined and gives a new point $x_{k+1}\in\dom\partial\varphi$.
	However, $x_{k+1}$ does not lie in $H_k$: Indeed, if it would lie in $H_k\cap\dom\partial\varphi$, this would contradict the assumption that~\eqref{eqn:Condition_hyperplane_nonempty_intersection} is not fulfilled. In~\cite{LLMO21}, the similar step size
	$t = \sigma \frac{f_{i_k}(x_k) - \inf_{i_k} f_{i_k}}{c\|\nabla f_{i_k}(x_k)\|_*^2}$
	with some constant $c>0$ was proposed for minimization with mirror descent under the name \emph{mirror-stochastic Polyak step size} (mSPS). 
	Both the projection and relaxed projection guarantee that $x_{k+1}\in\dom\varphi$ and deliver a new subgradient $x_{k+1}^*$ for the next update.
	The steps are summarized in Algorithm~\ref{alg:NBK} below. 
	
	\begin{algorithm}[htb]
		\begin{algorithmic}[1]
			\State Input: $\sigma>0$ and probabilities $p_i>0$ for $i=1,...,n$
			\State Initialization: $x_{0}^*\in \RR^d, x_0 = \nabla \varphi^*(x_0^*)$
			\For{$k=0,1,...$}
			\State choose $i_k\in\{1,...,n\}$ according to the probabilities $p_1,...,p_n$
			\If{$f_{i_k}(x_k)\neq 0$ and $\nabla f_{i_k}(x_k)\neq 0$} \\
			\Comment{otherwise, the component equation is solved already, or $H_k=\emptyset$}
			\State set $\beta_k = \langle \nabla f_{i_k}(x_k),x_k\rangle - f_{i_k}(x_k)$
			\If{ $H_k \cap \dom\partial\varphi\neq\emptyset$ }
			\State  \vspace{-0.3cm}
			$$ \displaystyle          \mbox{Find $t_k$:} \quad
			t_k \in \mathrm{argmin}_{t\in\RR} \, \varphi^*(x_k^*-t\nabla f_{i_k}(x_k)) + t\beta_k
			$$
			\vspace{-0.5cm}
			\Else{ set $t_k = \sigma \frac{f_{i_k}(x_k)}{\|\nabla f_{i_k}(x_k)\|_*^2}$ }
			\EndIf 
			\State update $x_{k+1}^*= x_k^* - t_k \nabla f_{i_k}(x_k) $
			\State update $x_{k+1} = \nabla \varphi^*(x_{k+1}^*)$
			\EndIf
			\EndFor
		\end{algorithmic}
		\caption{Nonlinear Bregman-Kaczmarz (NBK) method}
		\label{alg:NBK}
	\end{algorithm}	
	
	As an alternative method, we also consider the method which always chooses the step size $t_{k,\sigma}$ from~\eqref{eqn:mSPS_like_stepsize}.
	
	\begin{algorithm}[htb]
		\begin{algorithmic}[1]
			\State Input: $\sigma>0$ and probabilities $p_i>0$ for $i=1,...,n$ 
			\State Initialization: $x_{0}^*\in \RR^d, x_0 = \nabla \varphi^*(x_0^*)$
			\For{$k=0,1,...$}
			\State choose $i_k\in\{1,...,n\}$ according to the probabilities $p_1,...,p_n$              
			\If{$f_{i_k}(x)\neq 0$ and $\nabla f_{i_k}(x_k)\neq 0$}
			\State set $t_k = \sigma \frac{f_{i_k}(x_k)}{\|\nabla f_{i_k}(x_k)\|_*^2}$
			\State update $x_{k+1}^*= x_k^* - t_k \nabla f_{i_k}(x_k) $
			\State update $x_{k+1} = \nabla \varphi^*(x_{k+1}^*)$
			\EndIf
			\EndFor
		\end{algorithmic}
		\caption{Relaxed Nonlinear Bregman-Kaczmarz (rNBK) method}
		\label{alg:NBK_relaxed}
	\end{algorithm}

	Note that the problem~\eqref{eqn:BregProj_stepsize} is convex and one-dimensional and can be solved with the bisection method, if $\varphi^*$ is a $C^1$-function, or (globalized) Newton methods, if $\varphi^*$ is a $C^2$-function, see Appendix A. In Example~\ref{ex:varphi_from_sparse_Kaczmarz}, Example~\ref{ex:Simplex_entropy}, Example~\ref{ex:DGF_and_NBK_for_cartesian_products} and Example~\ref{ex:Double_simplex_entropy}, we show how to implement the steps of Algorithm~\ref{alg:NBK} for typical constraints. 
	
	\begin{bem}[Choice of $\sigma$ in Algorithm~\ref{alg:NBK} and Algorithm~\ref{alg:NBK_relaxed}]
			In this paper, we focus on the case that $\varphi$ is a strongly convex function. In this setting, we propose to choose the parameter $\sigma$ in Algorithm~\ref{alg:NBK} and Algorithm~\ref{alg:NBK_relaxed} as the modulus of strong convexity, since in this case our theorems in Section~\ref{sec:Realizations} guarantee convergence.
	\end{bem}

	\begin{bsp} \textup{(Unconstrained case and sparse Kaczmarz)}
		\label{ex:varphi_from_sparse_Kaczmarz} \mbox{} \\
		In the unconstrained case $\dom\varphi = \RR^d$, condition~\eqref{eqn:Condition_hyperplane_nonempty_intersection} is always fulfilled whenever $\nabla f_{i_k}(x_k)\neq 0$. \\
		For $\varphi(x) = \frac12\|x\|_2^2$, we obtain back the nonlinear Kaczmarz method~\textup{\cite{Nedic11,WLBG22,GLY22}}
		\[ x_{k+1} = x_k - \frac{f_{i_k}(x_k)}{\|\nabla f_{i_k}(x_k)\|_2^2}\nabla f_{i_k}(x_k). \]
		For the function 
		\begin{align}
			\label{eqn:RSK_mirror_map}
			\varphi(x)=\lambda\|x\|_1 + \frac12 \|x\|_2^2,
		\end{align} 
		Assumptions~\ref{as:Standard_assumptions}(i-iv) are fulfilled and it holds that
		$\varphi^*(x) = \tfrac12 \|S_\lambda(x)\|_2^2$ with the soft-shrinkage function 
		\[ S_\lambda(x) = \begin{cases} 
			x+\lambda, & x<-\lambda, \\
			0, & |x|\leq \lambda, \\
			x-\lambda, & x>\lambda
		\end{cases} 
		\]
		Hence, in this case lines 9, 11 and 12 of Algorithm~\ref{alg:NBK} read
		\begin{align*} 
			&\text{find } t_k\in \argmin_{t\in\RR} \beta_k t + \frac12 \|S_\lambda(x_k^*-t\nabla f_{i_k}(x_k))\|_2^2, \\
			&\text{update } x_{k+1}^* = x_k^* - t_k\nabla f_{i_k}(x_k), \\
			&\text{update } x_{k+1} = S_\lambda(x_{k+1}^*).
		\end{align*}
		
		For affine functions $f_{i}(x) = \langle a^{(i)},x\rangle - b_{i}$ with $a^{(i)}\in\RR^d$, $b_{i}\in\RR$, Algorithm~\ref{alg:NBK} has been studied under the name \emph{Sparse Kaczmarz method} and converges to a sparse solution of the linear system $f(x)=0$, see~\textup{\cite{LSW14,LS19}}. The update with $t_k$ from~\eqref{eqn:BregProj_stepsize} is also called the \emph{Exact step Sparse Kaczmarz} method. The linesearch problem can be solved exactly with reasonable effort, as $\varphi^*$ is a continuous piecewise quadratic function with at most $2d$ discontinuities. The corresponding solver is based on a sorting procedure and has complexity $\mathcal O(d\log(d))$, see~\textup{\cite{LSW14}} for details. 
	\end{bsp}

	\begin{bsp}\textup{(Simplex constraints)}
		\label{ex:Simplex_entropy} 
		We consider the probability simplex
		\[ C = \Delta^{d-1} := \big\{x\in\RR_{\geq 0}^d:\ \sum_{i=1}^d x_i=1\big\}. \]
		The restriction of the negative entropy function
		\begin{align}
			\label{eqn:NegativeEntropyFunctionSimplex}
			{\varphi}(x) = \begin{cases}
				\sum_{i=1}^d x_i \log(x_i), & x\in\Delta^{d-1} , \\
				+\infty, & \text{otherwise.}
			\end{cases}
		\end{align}
		fulfills Assumption~\ref{as:Standard_assumptions}(i-iv) and is $1$-strongly convex with respect to $\|\cdot\|_1$ due to Pinsker's inequality, see~\cite{FH03,Pinsker64} and \cite[Example 5.27]{Beck17}. We have that 
		\[ \dom\partial\varphi = \ri\Delta^{d-1} = \big\{x\in\RR_{> 0}^d:\ \sum_{i=1}^d x_i=1\big\} =: \Delta^{d-1}_+. \]
		We can characterize condition~\eqref{eqn:Condition_hyperplane_nonempty_intersection} easily as follows: The hyperplane $H(\alpha,\beta)$ intersects $\dom \partial\varphi=\Delta^{d-1}_+$ if and only if
		\begin{itemize}
			\item $\alpha = \beta \mathbbm{1}_d$ or
			\item there exist $r,s\in\{1,..., d\}$ with $\alpha_r<\beta<\alpha_s$.
		\end{itemize}
		This condition is quickly established by the intermediate value theorem and can be easily checked during the method. When verifying the condition in practice, in case of instabilities one may consider the restricted index set 
		\begin{align*}
			\{i=1,...,n\mid |x_i|>\delta\}
		\end{align*} for some positive $\delta$.
		
		The Bregman distance induced by $\varphi$ is the Kullback-Leibler divergence for probability vectors
		\[ D_{\varphi}(x,y) = \sum_{i=1}^d y_i\log\big(\frac{y_i}{x_i}\big), \qquad x\in\Delta^{d-1}_+, y\in\Delta^{d-1}. \]
		The convex conjugate of $\varphi$ is the \emph{log-sum-exp}-function
		$ {\varphi}^*(p) = \log\big(\sum\limits_{i=1}^d e^{p_i}\big). $
		Since $\varphi$ is differentiable, the steps of Algorithm~\ref{alg:NBK} can be rewritten by substituting $x_k^*$ by $\nabla\varphi(x_k)=1+\log(x_k)$. Denoting the $i$th component of an iterate $x_{l}$ by $x_{l,i}$, lines 9, 11 and 12 of Algorithm~\ref{alg:NBK} read
		\begin{align} 
			\label{eqn:t_KL_Simplex_min_problem}
			\text{find } t_k&\in\argmin_{t\in\RR} \beta_k t + \log\big(\sum_{i=1}^d x_{k,i} e^{-t\partial_i f_{i_k}(x_k)}\big), \\
			\label{eq:expgrad}
			x_{k+1} &=  \frac{x_k \cdot e^{-t_k\nabla f_{i_k}(x_k)}}{\|x_k \cdot e^{-t_k\nabla f_{i_k}(x_k)}\|_1},
		\end{align}
		where multiplication and exponentiation of vectors are understood componentwise.
		The method~\eqref{eq:expgrad} is known with the name exponentiated gradient descent or entropic mirror descent, provided that $t_k$ is nonnegative.
		We claim that our proposed step size $t_{k,\varphi}$ is new. Note that some convex polyhedra, such as $\ell^1$-balls, can be transformed to $\Delta^{d'-1}$ for some $d'\in\NN$ by writing a point as a convex combination of certain extreme points~\textup{\cite{LLMO21}}.
	\end{bsp}

	\begin{bsp}\textup{(Cartesian products of constraints)}
		\label{ex:DGF_and_NBK_for_cartesian_products}
		For $i\in\{1,...,m\}$, let $\varphi_i$ be a DGF for $C_i\subset D_i\subset\RR^{d_i}$ fulfilling Assumption~\ref{as:Standard_assumptions}(i-iv) and let $f\colon D:=\bigtimes_{i=1}^m D_i \to\RR^n$ fulfill Assumption~\ref{as:Standard_assumptions}(v-vi). Then 
		\begin{align*}
			\varphi(x) = \sum_{i=1}^m \varphi_i(x_i), \quad x = (x_1, ..., x_m) \text{ with } x_i\in \RR^d
		\end{align*}
		is a DGF for $C = \bigtimes_{i=1}^m C_i$ fulfilling Assumption~\ref{as:Standard_assumptions}(i-iv) with 
		\begin{align*}
			\dom\partial\varphi = \bigtimes_{i=1}^m \dom\partial\varphi_i \quad \text{and} \quad \partial\varphi(x) =  \bigtimes_{i=1}^m \partial\varphi_i(x_i) \text{ for all } x_i\in\dom\partial\varphi_i.
		\end{align*} 
		Denoting the $i$th component of an iterate $x^{(*)}_l$ by $x^{(*)}_{l,i}$, the lines 9, 11 and 12 of Algorithm~\ref{alg:NBK} for $i\in\{1,...,m\}$ read as follows:
		\begin{align*}
			\text{find } t_k&\in \argmin_{t\in\RR} \beta_k t + \sum_{i=1}^m \varphi_i^*\big(x_{k,i}^*-t\nabla_{i}f_{i_k}(x_k)\big), \\
			x_{k+1,i}^* &= x_{k,i}^* - t_k\nabla_i f_{i_k}(x_k) \qquad \text{for } i = 1,...,m, \\
			x_{k+1,i} &= \nabla\varphi_i^*(x_{k+1,i}^*) \qquad \text{for } i = 1,...,m,
		\end{align*}
		where $\nabla_i$ stands for the gradient w.r.t. the $i$th block of variables. \\
		Finally, we give a suggestion which constant $\sigma$ and norm $\|\cdot\|_\infty$ should be used in Algorithm~\ref{alg:NBK_relaxed}/ line 10 in Algorithm~\ref{alg:NBK}. Let us assume that $\varphi$ is $\sigma_i$-strongly convex w.r.t. a norm $\|\cdot\|_{(i)}$ on $\RR^{d_i}$. Then, the function $\varphi$ is $\sigma$-strongly convex with $\sigma=\min_{i=1,...,m}\sigma_i$ w.r.t. the mixed norm 
		\begin{align*}
			\|u\| := \sqrt{\sum_{i=1}^m \|u_i\|_{(i)}^2}.
		\end{align*}
		Indeed, for all $x,y$ with $x_i,y_i\in\RR^{d_i}$ it holds that 
		\begin{align*}
			\frac{\sigma}{2}\|x-y\|^2 = \frac{\sigma}{2}\sum_{i=1}^m \|x_i-y_i\|_{(i)}^2 \leq \sum_{i=1}^m D_{\varphi_i}^{x_i^*}(x_i,y_i) = D_\varphi^{x^*}(x,y).
		\end{align*}
		A quick calculation using Cauchy-Schwarz' inequality shows that the dual norm of $\|\cdot\|$ is given by 
		\begin{align}
			\label{eqn:Prod_dual_norm}
			\|u^*\|_* := \sqrt{\sum_{i=1}^m \|u_i^*\|_{(i,*)}^2},
		\end{align}
		where $\|\cdot\|_{(i,*)}$ is the dual norm of $\|\cdot\|_i$ on $\RR^{d_i}$. 
		Hence, we recommend to use Algorithm~\ref{alg:NBK_relaxed} with~\eqref{eqn:Prod_dual_norm} and $\sigma=\min\limits_{i=1,...,m}\sigma_i$, if $\varphi_i$ is $\sigma_i$-strongly convex w.r.t.~$\|\cdot\|_{(i)}$.
	\end{bsp}
	
	\begin{bsp}\textup{(Two-fold Cartesian product of simplex constraints)}
		\label{ex:Double_simplex_entropy}
		As a particular instance of Example~\ref{ex:DGF_and_NBK_for_cartesian_products} we consider the $2$-fold product of the probability simplex $C_i=\Delta^{d-1}$, $i\in\{1,2\}$ with $\varphi_i=\varphi$ from Example~\ref{ex:Simplex_entropy}. The properties from Assumption~\ref{as:Standard_assumptions} are inherited from the $\varphi_i$. We denote the iterates of Algorithm~\ref{alg:NBK} by $x_k,y_k\in\Delta^{d-1}$ and address its components by $x_{k,i}, y_{k,i}$ for $i=1,...,d$. Similar to Example~\ref{ex:Simplex_entropy}, the steps of the method can be rewritten as 
		\begin{align}
			\text{find } t_k &\in \argmin_{t\in\RR} \beta_k t + \log\Big(\sum_{l=1}^d x_{k,l} e^{-t(\nabla_x f_{i_k}(x_k))_l}\Big) + \log\Big(\sum_{l=1}^d y_{k,l} e^{-t(\nabla_y f_{i_k}(x_k))_l}\Big), \label{eqn:Prod_simplex_linesearch_obj} \\
			x_{k+1} &= \frac{x_k \cdot e^{-t_k\nabla_x f_{i_k}(x_k)}}{\|x_k \cdot e^{-t_k\nabla_x f_{i_k}(x_k)}\|_1}, \qquad 
			y_{k+1} = \frac{y_k \cdot  e^{-t_k\nabla_y f_{i_k}(y_k)}}{\|y_k \cdot e^{-t_k\nabla_y f_{i_k}(y_k)}\|_1},
			\nonumber
		\end{align}
		where $\nabla_x$ stands for the gradient w.r.t. $x$ and $\nabla_y$ for the gradient w.r.t. $y$.
		Also here, we can give a characterization of condition~\eqref{eqn:Condition_hyperplane_nonempty_intersection}: For $\alpha=(\alpha_1, \alpha_2)$ with $\alpha_1, \alpha_2\in\RR^d$ and $\beta\in\RR$, the hyperplane $H(\alpha,\beta)$ intersects $\dom\partial\varphi = \Delta_+^{d+1}\times\Delta_+^{d+1}$ if and only if for $(i,j)=(1,2)$ or $(i,j)=(2,1)$ one of the following conditions is fulfilled:
		\begin{itemize}
			\item $\alpha_i = c\mathbbm{1}_d$ with some $c\in\RR$ and $\alpha_j= (\beta-c)\mathbbm{1}_d$, 
			\item $\alpha_i = c\mathbbm{1}_d$ with some $c\in\RR$ and there exist $r, s\in\{1,...,d\}$ with \\ $\alpha_{j,r} < \beta - c < \alpha_{j,s}$ or
			\item $\big] \min \alpha_i, \max \alpha_i \big[ \ \cap \ 
			\big] \beta - \max \alpha_j, \beta - \min \alpha_j \big[ \ \neq \ \emptyset$.
		\end{itemize} 
		To prove this condition, we can invoke Proposition~\ref{prop:t_min_problem_abstract_hyperplane} which states that \eqref{eqn:Condition_hyperplane_nonempty_intersection} is fulfilled if and only if the objective function $g$ from~\eqref{eqn:t_min_problem_abstract_hyperplane} has a minimizer. Next, we note that, for each $c\in\RR$, the objective in~\eqref{eqn:Prod_simplex_linesearch_obj} can be rewritten as
		\begin{align*}
			g(t) &= \log\Big(\sum_{l=1}^d x_{k,l} e^{-(\alpha_{1,l}-c)t}\Big) + 
			\log\Big(\sum_{l=1}^d y_{k,l} e^{(\beta-c-\alpha_{2,l})t}\Big) \\
			&= \log\Big(\sum_{l=1}^d y_{k,l} e^{-(\alpha_{2,l}-c)t}\Big) + 
			\log\Big(\sum_{l=1}^d x_{k,l} e^{(\beta-c-\alpha_{1,l})t}\Big)
		\end{align*}
		and a case-by-case analysis shows that $g$ has a minimizer if and only if one of the above assertions is fulfilled. Note that also the here discussed condition can be easily checked during the method. We remind that, in case of instabilities one may consider the restricted index set  \[ \{i=1,...,n\mid |x_i|>\delta \ \text{and} \ |y_i|>\delta\} \] for some positive $\delta$. 
		As derived in Example~\ref{ex:DGF_and_NBK_for_cartesian_products}, in Algorithm~\ref{alg:NBK_relaxed}/ line 10 of Algorithm~\ref{alg:NBK} we use $\sigma=1$ and $\|u^*\|_* = \sqrt{\|u_1^*\|_\infty^2 + \|u_2^*\|_\infty^2}$.
	\end{bsp}

	\section{Convergence}
	
	In this section we do the convergence analysis of Algorithm~\ref{alg:NBK}. 
	
	At first, we characterize fixed points of Algorithm~\ref{alg:NBK} and Algorithm~\ref{alg:NBK_relaxed} and provide necessary lemmas for the subsequent analysis. In Section~\ref{subsec:star-convex}, we prove that for nonnegative star-convex functions $f_i$, condition~\eqref{eqn:Condition_hyperplane_nonempty_intersection} is always fulfilled and the step size~\eqref{eqn:BregProj_stepsize} is better than the relaxed step size~\eqref{eqn:mSPS_like_stepsize} in the sense that it results in an iterate with a smaller Bregman distance to all solutions of~\eqref{eqn:problem}. Finally, we present convergence results for Algorithm~\ref{alg:NBK} for this setting. 
	In Section~\ref{subsec:tcc}, we prove convergence in a second setting, namely in the case that the functions $f_i$ fulfill a local tangential cone condition as in~\cite{JW13,MS16,WLBG22}. \\

	As a first result, we determine the fixed points of Algorithm~\ref{alg:NBK}. The proposition states in particular that, in the unconstrained case $\dom\varphi=\RR^d$, fixed points are exactly the stationary points of the least-squares function $\|f(x)\|_2^2$.

	\begin{prop}
		\label{prop:Fixed_points}
		Let Assumption~\ref{as:Standard_assumptions} hold and let $x\in\dom\partial\varphi$ and $x^*\in\partial\varphi(x)$. The pair $\big(x,x^*\big)$ is a fixed point of Algorithm~\ref{alg:NBK} if and only if for all $i\in\{1,...,n\}$ it holds that $f_i(x) = 0$ or $\nabla f_i(x)=0$.
	\end{prop}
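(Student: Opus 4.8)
The plan is to read the proposition off the definition of a fixed point of Algorithm~\ref{alg:NBK}. Since $x^*\in\partial\varphi(x)$ is equivalent to $x=\nabla\varphi^*(x^*)$, the pair $(x,x^*)$ is a fixed point precisely when, for \emph{every} index $i\in\{1,\dots,n\}$, one iteration of Algorithm~\ref{alg:NBK} started from $(x,x^*)$ with the choice $i_k=i$ returns the pair $(x,x^*)$ again. With this reading the backward implication is immediate: if for each $i$ either $f_i(x)=0$ or $\nabla f_i(x)=0$, then the conditional in line~5 of Algorithm~\ref{alg:NBK} (i.e.\ $f_{i_k}(x_k)\neq 0$ and $\nabla f_{i_k}(x_k)\neq 0$) fails for every choice $i_k=i$, so the body of the loop is never executed and the state is literally unchanged; hence $(x,x^*)$ is a fixed point.

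For the forward implication I would argue by contraposition: assume there is an index $i$ with $f_i(x)\neq 0$ \emph{and} $\nabla f_i(x)\neq 0$ and run one iteration with $i_k=i$, showing that the state must change. Set $\beta:=\langle\nabla f_i(x),x\rangle-f_i(x)$ and $H:=\{y\in\RR^d: f_i(x)+\langle\nabla f_i(x),y-x\rangle=0\}=H(\nabla f_i(x),\beta)$, and distinguish the two branches of the algorithm.

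If $H\cap\dom\partial\varphi\neq\emptyset$, then by Assumption~\ref{as:Standard_assumptions}(iii) we have $\dom\partial\varphi=\ri\dom\varphi$, so Proposition~\ref{prop:t_min_problem_abstract_hyperplane} applies and the new primal iterate $x_+$ is the Bregman projection of $x$ onto $H$; in particular $x_+\in H$. But $x\notin H$, since evaluating the defining equation of $H$ at $y=x$ leaves $f_i(x)\neq 0$. Hence $x_+\neq x$, and the state has changed. If instead $H\cap\dom\partial\varphi=\emptyset$, the relaxed step is taken, producing the new dual iterate $x_+^*=x^*-t\,\nabla f_i(x)$ with $t=\sigma f_i(x)/\|\nabla f_i(x)\|_*^2$. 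Since $\sigma>0$ and $f_i(x)\neq 0$ we get $t\neq 0$, and $\nabla f_i(x)\neq 0$ then gives $t\,\nabla f_i(x)\neq 0$, so $x_+^*\neq x^*$ and again the state has changed. In either case $(x,x^*)$ is not a fixed point, contradicting the hypothesis; hence every $i$ satisfies $f_i(x)=0$ or $\nabla f_i(x)=0$.

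The argument is short, and the only point requiring a little care is the relaxed-step case: there one cannot conclude $x_+\neq x$ directly, because $\nabla\varphi^*=(\partial\varphi)^{-1}$ need not be injective (the subdifferential $\partial\varphi$ may be multivalued), so the change of state has to be detected on the dual variable $x^*$ rather than on $x$ itself. Finally, to connect with the remark preceding the statement, note that ``$f_i(x)=0$ or $\nabla f_i(x)=0$'' is equivalent to $f_i(x)\nabla f_i(x)=0$, i.e.\ to stationarity at $x$ of the component least-squares objective $\tfrac12 f_i(\cdot)^2$; in the unconstrained case $\dom\varphi=\RR^d$ this recovers the stated description of the fixed points via the least-squares function.
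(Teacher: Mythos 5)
Your proof is correct and follows essentially the same route as the paper's: the backward direction is immediate from the guard in line~5, and the forward direction splits on whether $H_k\cap\dom\partial\varphi$ is empty, detecting the change on the primal iterate (via $x\notin H$ when $f_i(x)\neq 0$) in the projection branch and on the dual iterate in the relaxed branch, exactly as the paper does. The only difference is that you phrase the forward direction contrapositively while the paper argues directly from the fixed-point assumption; your added caution about the non-injectivity of $\nabla\varphi^*$ in the relaxed case is a sound observation and consistent with the paper's choice to argue on $x^*$ there.
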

	
	\begin{proof}
		If $f_i(x)=0$ or $\nabla f_i(x)=0$ holds for all $i\in\{1,...,n\}$, then $(x,x^*)$ is a fixed point by definition of the steps. 
		Next, we assume that $x\in\dom\partial\varphi$ is a fixed point of Algorithm~\ref{alg:NBK} and $\nabla f_i(x)\neq 0$. First, assume that condition~\eqref{eqn:Condition_hyperplane_nonempty_intersection} is not fulfilled, then the update for $x^*$ shows that $t_{k,\sigma}=0$, since $\nabla f_i(x)\neq 0$, and hence, $f_i(x)=0$. Finally, we assume that condition~\eqref{eqn:Condition_hyperplane_nonempty_intersection} holds. Then, from Proposition~\ref{prop:t_min_problem_abstract_hyperplane} we know that Algorithm~\ref{alg:NBK} computes the Bregman projection $x = \Pi^{x^*}_{\varphi,H}(x)$ with 
		\[ H = \big\{ y\in\RR^d: f_i(x) + \langle \nabla f_i(x),y-x\rangle = 0\big\}. \]
		But this means that $x\in H$ and hence, $f_i(x) = 0$ holds also in this case. 
	\end{proof}

	The following fact will be useful in the convergence analysis. It shows that Algorithm~\ref{alg:NBK} performs a mirror descent step whenever $f_i(x)>0$, and a mirror ascent step whenever $f_i(x)<0$.

	\begin{lem}
		\label{lem:sign_of_stepsize}
		Consider the $k$th iterate $x_k$ of Algorithm~\ref{alg:NBK} and consider the case that $f_{i_k}(x_k)\neq 0$ and $\nabla f_{i_k}(x_k)\neq 0$. Let Assumption~\ref{as:Standard_assumptions} hold.
		Then, the step size $t_k$ in Algorithm~\ref{alg:NBK} fulfills
		\begin{align*} 
			\mathrm{sign}(t_k) = \mathrm{sign}(f_{i_k}(x_k)). 
		\end{align*}
	\end{lem}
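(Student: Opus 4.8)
The plan is to split along the definition of $t_k$ in Algorithm~\ref{alg:NBK}. If the intersection condition~\eqref{eqn:Condition_hyperplane_nonempty_intersection} is violated, then $t_k = t_{k,\sigma} = \sigma f_{i_k}(x_k)/\|\nabla f_{i_k}(x_k)\|_*^2$; since $\sigma>0$ and $\|\nabla f_{i_k}(x_k)\|_*^2>0$ (because $\nabla f_{i_k}(x_k)\neq 0$ by hypothesis), the sign statement is immediate in this case. So the real work is the case where~\eqref{eqn:Condition_hyperplane_nonempty_intersection} holds and $t_k$ is a minimizer of
$g(t) := \varphi^*(x_k^* - t\nabla f_{i_k}(x_k)) + \beta_k t$.

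First I would record the structural facts about $g$: by Assumption~\ref{as:Standard_assumptions}(ii) the conjugate $\varphi^*$ is finite, convex and differentiable on all of $\RR^d$, hence $g$ is a finite, convex, differentiable function of $t\in\RR$ with
$g'(t) = \beta_k - \langle \nabla f_{i_k}(x_k),\, \nabla\varphi^*(x_k^* - t\nabla f_{i_k}(x_k))\rangle$,
and by Proposition~\ref{prop:t_min_problem_abstract_hyperplane} a minimizer exists. The key computation is the value of $g'$ at $t=0$. Using subgradient inversion $\nabla\varphi^*(x_k^*) = x_k$ (valid since $x_k^*\in\partial\varphi(x_k)$) together with the explicit expression $\beta_k = \langle \nabla f_{i_k}(x_k),x_k\rangle - f_{i_k}(x_k)$ from~\eqref{eqn:beta_k}, one obtains
$g'(0) = \beta_k - \langle \nabla f_{i_k}(x_k),x_k\rangle = -f_{i_k}(x_k)$.

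Finally I would conclude by monotonicity of $g'$. Since $g$ is convex, $g'$ is nondecreasing. If $f_{i_k}(x_k) > 0$, then $g'(t) \le g'(0) = -f_{i_k}(x_k) < 0$ for every $t\le 0$, so $g$ is strictly decreasing on $(-\infty,0]$ and every minimizer of $g$ is strictly positive; in particular $t_k>0$. Symmetrically, if $f_{i_k}(x_k) < 0$, then $g'(t) \ge g'(0) > 0$ for every $t\ge 0$, so every minimizer is strictly negative and $t_k<0$. In both subcases $\mathrm{sign}(t_k) = \mathrm{sign}(f_{i_k}(x_k))$, which together with the relaxed case finishes the proof.

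The main point to be careful about — more a subtlety than an obstacle — is that the linesearch problem~\eqref{eqn:BregProj_stepsize} need not have a unique minimizer, so the argument must be phrased to pin down the sign of \emph{every} minimizer; the monotonicity of $g'$ does exactly this. The remaining ingredients are routine Fenchel-duality bookkeeping.
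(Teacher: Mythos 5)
Your proof is correct and follows essentially the same route as the paper's: in the relaxed case the sign is immediate from the formula for $t_{k,\sigma}$, and in the projection case both arguments compute $g'(0)=-f_{i_k}(x_k)$ via $\nabla\varphi^*(x_k^*)=x_k$ and conclude from the monotonicity of the convex function's derivative. Your phrasing via "every minimizer lies strictly on one side of $0$" is a slightly more careful handling of possible non-uniqueness of $t_{k,\varphi}$ than the paper's appeal to $g'(t_{k,\varphi})=0$, but it is the same underlying argument.
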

	
	\begin{proof}
		If condition~\eqref{eqn:Condition_hyperplane_nonempty_intersection} is not fulfilled, the assertion is clear by definition of the step size. Next, we assume that~\eqref{eqn:Condition_hyperplane_nonempty_intersection} holds.
		Then, the function
		\begin{align} 
			\label{eqn:g_function}
			g_{i_k,x_k^*}(t) = \varphi^*(x_k^*-t\nabla f_{i_k}(x_k)) + t\big(\langle \nabla f_{i_k}(x_k),x_k\rangle - f_{i_k}(x_k)\big)
		\end{align}
		is minimized by $t_{k,\varphi}$. (Note that the expression is indeed fully determined by $i_k$ and $x_k^*$ by the fact that $x_k=\nabla\varphi^*(x_k^*$)). We compute  
		\begin{align} 
			\label{eqn:g_derivative_at_zero}
			g_{i_k,x_k^*}'(0) = -\langle \nabla f_{i_k}(x_k), \ \nabla \varphi^*(x_k^*)\rangle + \langle \nabla f_{i_k}(x_k),x_k\rangle - f_{i_k}(x_k) = -f_{i_k}(x_k). 
		\end{align}
		Since $g_{i_k,x_k^*}$ is convex, its derivative is monotonically increasing and it vanishes at~$t_{k,\varphi}$. Since it holds $f_{i_k}(x_k)\neq 0$ by assumption, we conclude that $t_{k,\varphi}$ and $f_{i_k}(x_k)$ have the same sign.
	\end{proof}

	To exploit strong convexity and smoothness, we will use the following. 
	
	\begin{lem}
		\label{lem:BasicsConvexAnalysis_strongly_convex}
		If $\varphi\colon\RR^d\to\RR$ is proper, convex and lower semicontinuous, then the following statements are equivalent:  
		\begin{enumerate}[(i)]
			\item $\varphi$ is $\sigma$-strongly convex w.r.t. $\|\cdot\|$. 
			\item For all $x,y\in\RR^d$ and $x^*\in\partial\varphi(x)$, $y^*\in\partial\varphi(y)$,
			\[ \langle x^*-y^*, x-y\rangle \geq \sigma\|x-y\|^2. \]
			\item The function $\varphi^*$ is $\tfrac{1}{\sigma}$-smooth w.r.t. $\|\cdot\|_*$.
		\end{enumerate}
	\end{lem}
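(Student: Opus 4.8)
The plan is to prove the cyclic chain of implications (i) $\Rightarrow$ (ii) $\Rightarrow$ (iii) $\Rightarrow$ (i), which is the standard route for duality between strong convexity and smoothness; most of these are classical and I would cite them where convenient (e.g.~\cite{Rockafellar70,Bauschke11}), but let me indicate the mechanics.

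For (i) $\Rightarrow$ (ii): take $x,y\in\RR^d$ with subgradients $x^*\in\partial\varphi(x)$, $y^*\in\partial\varphi(y)$. Since $\varphi$ is $\sigma$-strongly convex we have both $D_\varphi^{x^*}(x,y)\geq\tfrac{\sigma}{2}\|x-y\|^2$ and $D_\varphi^{y^*}(y,x)\geq\tfrac{\sigma}{2}\|x-y\|^2$. Writing out the two Bregman distances and adding them, the $\varphi(x)$ and $\varphi(y)$ terms cancel and one is left precisely with $\langle x^*-y^*,x-y\rangle\geq\sigma\|x-y\|^2$. (Here one should note $\partial\varphi(x)\neq\emptyset$ on $\dom\partial\varphi$, so the inequality is vacuous or meaningful exactly where it should be.)

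For (ii) $\Rightarrow$ (iii): recall that under the standing hypotheses $\varphi^*$ is finite and differentiable everywhere, and $\nabla\varphi^*=(\partial\varphi)^{-1}$ in the sense that $p\in\partial\varphi(x)\iff x=\nabla\varphi^*(p)$. Given $p,q\in\RR^d$, set $x=\nabla\varphi^*(p)$, $y=\nabla\varphi^*(q)$, so $p\in\partial\varphi(x)$, $q\in\partial\varphi(y)$. Inequality (ii) reads $\langle p-q,\,\nabla\varphi^*(p)-\nabla\varphi^*(q)\rangle\geq\sigma\|\nabla\varphi^*(p)-\nabla\varphi^*(q)\|^2$. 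By Cauchy--Schwarz (in the norm/dual-norm pairing) the left side is at most $\|p-q\|_*\,\|\nabla\varphi^*(p)-\nabla\varphi^*(q)\|$, so $\|\nabla\varphi^*(p)-\nabla\varphi^*(q)\|\leq\tfrac1\sigma\|p-q\|_*$, i.e.\ $\nabla\varphi^*$ is $\tfrac1\sigma$-Lipschitz w.r.t.\ $\|\cdot\|_*$; the descent-lemma form of $\tfrac1\sigma$-smoothness for the convex function $\varphi^*$ then follows by integrating along the segment.

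For (iii) $\Rightarrow$ (i): this is the one to be careful with, and I expect it to be the main obstacle, because $\varphi$ need not be differentiable and its domain need not be open, so a naive "take second derivatives" argument fails. The clean way is to use the biconjugate: since $\varphi$ is proper, convex, lsc, we have $\varphi=\varphi^{**}$. Now $\varphi^*$ being $\tfrac1\sigma$-smooth w.r.t.\ $\|\cdot\|_*$ means $\varphi^*(q)\leq\varphi^*(p)+\langle\nabla\varphi^*(p),q-p\rangle+\tfrac1{2\sigma}\|q-p\|_*^2$ for all $p,q$. A standard conjugation argument — add $\langle y,\cdot\rangle$ and take suprema, equivalently apply the known duality "$h$ is $L$-smooth w.r.t.\ a norm $\iff$ $h^*$ is $\tfrac1L$-strongly convex w.r.t.\ the dual norm" (see e.g.~\cite[Theorem 5.26]{Beck17} or~\cite{Bauschke11}) — yields that $\varphi^{**}=\varphi$ is $\sigma$-strongly convex w.r.t.\ $\|\cdot\|$. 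To make the duality statement fully rigorous on $\dom\partial\varphi$ one checks the strong-convexity inequality $\varphi(\lambda x+(1-\lambda)y)\leq\lambda\varphi(x)+(1-\lambda)\varphi(y)-\tfrac{\sigma}{2}\lambda(1-\lambda)\|x-y\|^2$ directly from the smoothness bound on $\varphi^*$ via Fenchel--Young, which avoids any differentiability assumption on $\varphi$ itself. I would present the conjugation computation in a couple of lines and cite the textbook duality result rather than reprove it.
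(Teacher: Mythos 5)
Your proposal is correct, but it is worth noting that the paper does not actually prove this lemma at all: it simply cites Z\u{a}linescu \cite[Corollary 3.5.11 and Remark 3.5.3]{Z02}, so you are supplying a self-contained argument where the authors defer to the literature. Your cyclic chain (i)~$\Rightarrow$~(ii)~$\Rightarrow$~(iii)~$\Rightarrow$~(i) is the standard one and each step is sound: the symmetrized Bregman sum for (i)~$\Rightarrow$~(ii), the monotonicity-plus-Cauchy--Schwarz (in the norm/dual-norm pairing) argument for (ii)~$\Rightarrow$~(iii), and the biconjugation duality for (iii)~$\Rightarrow$~(i). The only place I would tighten the write-up is in (ii)~$\Rightarrow$~(iii), where you justify the finiteness and differentiability of $\varphi^*$ by appeal to ``the standing hypotheses'': the lemma as stated assumes only that $\varphi$ is proper, convex and lower semicontinuous, not the supercoercivity and essential strict convexity of Assumption~\ref{as:Standard_assumptions}(ii), so you should instead derive these facts from (ii) itself --- strong monotonicity of the maximally monotone operator $\partial\varphi$ forces it to be surjective with single-valued inverse, whence $\dom\varphi^*=\RR^d$ and $\partial\varphi^*=(\partial\varphi)^{-1}$ is a single-valued, everywhere-defined map, i.e.\ $\varphi^*$ is differentiable on all of $\RR^d$. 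With that repair the argument is complete and, arguably, more informative to the reader than the bare citation in the paper.
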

	
	\begin{proof}
		See \cite[Corollary 3.5.11 and Remark 3.5.3]{Z02}.
	\end{proof}

	\begin{lem}
		\label{lem:BasicsConvexAnalysis_convex_Lsmooth}
		If $\varphi\colon\RR^d\to\RR$ is convex and lower semicontinuous, then the following statements are equivalent: 
		\begin{enumerate}[(i)]
			\item $\varphi$ is $L$-smooth w.r.t. a norm $\|\cdot\|$, 
			\item $\varphi(y) \leq \varphi(x) + \langle \nabla\varphi(x),y-x\rangle + \frac{L}{2} \|x-y\|^2$ for all $x,y\in\RR^d$,
			\item $\langle \nabla\varphi(y) - \nabla\varphi(x),y-x\rangle \leq L\|x-y\|^2$ for all $x,y\in\RR^d$.
		\end{enumerate}
	\end{lem}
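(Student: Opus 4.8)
I take ``$\varphi$ is $L$-smooth w.r.t.\ $\|\cdot\|$'' to mean that $\varphi$ is differentiable with $\|\nabla\varphi(x)-\nabla\varphi(y)\|_*\le L\|x-y\|$ for all $x,y\in\RR^d$, and I note that (ii) and (iii) already presuppose differentiability of $\varphi$; since $\varphi$ is finite, convex and lower semicontinuous it is in any case continuous. My plan is to establish all equivalences through the chain (i)$\Rightarrow$(ii), (ii)$\Leftrightarrow$(iii) and (ii)$\Rightarrow$(i).

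For (i)$\Rightarrow$(ii) I would apply the fundamental theorem of calculus to $t\mapsto\varphi(x+t(y-x))$ to write $\varphi(y)-\varphi(x)=\int_0^1\langle\nabla\varphi(x+t(y-x)),y-x\rangle\,\dd t$, subtract $\langle\nabla\varphi(x),y-x\rangle$, and bound the resulting integrand by $\|\nabla\varphi(x+t(y-x))-\nabla\varphi(x)\|_*\,\|y-x\|\le L t\,\|y-x\|^2$ using the Cauchy--Schwarz inequality for the dual norm; integrating in $t$ over $[0,1]$ produces the constant $\tfrac L2$. For (ii)$\Rightarrow$(iii) I would add the inequality of (ii) for the ordered pair $(x,y)$ to the one for $(y,x)$: the $\varphi$-values cancel and what remains rearranges to (iii). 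For the converse (iii)$\Rightarrow$(ii) I would integrate again: with $z_t:=x+t(y-x)$, condition (iii) for the pair $(x,z_t)$ reads $\langle\nabla\varphi(z_t)-\nabla\varphi(x),y-x\rangle\le L t\,\|y-x\|^2$, and the integral of the left-hand side over $[0,1]$ is exactly $\varphi(y)-\varphi(x)-\langle\nabla\varphi(x),y-x\rangle$.

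The substantial step is (ii)$\Rightarrow$(i). Fix $x$ and introduce $\psi(z):=\varphi(z)-\langle\nabla\varphi(x),z\rangle$, which is convex, differentiable with $\nabla\psi=\nabla\varphi-\nabla\varphi(x)$, hence globally minimized at $x$, and still obeys the quadratic upper bound of (ii) with the same $L$. I would then minimize the right-hand side $v\mapsto\psi(z)+\langle\nabla\psi(z),v-z\rangle+\tfrac L2\|v-z\|^2$ over $v\in\RR^d$: substituting $u=v-z$, the infimum of $\langle\nabla\psi(z),u\rangle+\tfrac L2\|u\|^2$ equals $-\bigl(\tfrac L2\|\cdot\|^2\bigr)^*(-\nabla\psi(z))=-\tfrac1{2L}\|\nabla\psi(z)\|_*^2$, where I use that the Fenchel conjugate of $\tfrac L2\|\cdot\|^2$ is $\tfrac1{2L}\|\cdot\|_*^2$ and that the infimum is attained because the function is coercive and continuous on $\RR^d$. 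Combining this with $\psi(x)\le\psi(v)$ gives $\tfrac1{2L}\|\nabla\varphi(z)-\nabla\varphi(x)\|_*^2\le D_\varphi(x,z)$ for every $z$. Exchanging the roles of $x$ and $z$ and adding the two resulting estimates yields the co-coercivity bound $\|\nabla\varphi(z)-\nabla\varphi(x)\|_*^2\le L\,\langle\nabla\varphi(z)-\nabla\varphi(x),z-x\rangle$; finally Cauchy--Schwarz for the dual norm applied to the right-hand side, followed by division (the case of coinciding gradients being trivial), gives $\|\nabla\varphi(z)-\nabla\varphi(x)\|_*\le L\,\|z-x\|$, which is (i).

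The only genuinely delicate point, which I would treat carefully, is the dual computation in (ii)$\Rightarrow$(i): unlike the Euclidean case one cannot ``set the gradient to zero'' directly, and one must appeal to the conjugacy $\bigl(\tfrac L2\|\cdot\|^2\bigr)^*=\tfrac1{2L}\|\cdot\|_*^2$ together with attainment of the minimum. Everything else reduces to one-dimensional integration along line segments and to adding pairs of inequalities. As a shortcut one could instead simply cite this classical equivalence, e.g.\ from the reference already used for Lemma~\ref{lem:BasicsConvexAnalysis_strongly_convex}.
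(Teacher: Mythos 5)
Your proposal is correct. The paper itself offers no argument for this lemma beyond a citation to Z\u{a}linescu (the same reference used for Lemma~\ref{lem:BasicsConvexAnalysis_strongly_convex}), so there is no in-paper proof to compare against; your closing remark that one could simply cite that equivalence is exactly what the authors do. Your self-contained chain (i)$\Rightarrow$(ii)$\Leftrightarrow$(iii) and (ii)$\Rightarrow$(i) is the standard one and each step checks out: the integration along segments for (i)$\Rightarrow$(ii) and (iii)$\Rightarrow$(ii), the symmetrization for (ii)$\Rightarrow$(iii), and in particular the delicate step (ii)$\Rightarrow$(i), where you correctly avoid the Euclidean shortcut of ``setting the gradient to zero'' and instead use the conjugacy $\bigl(\tfrac{L}{2}\|\cdot\|^2\bigr)^*=\tfrac{1}{2L}\|\cdot\|_*^2$ to get $\tfrac{1}{2L}\|\nabla\varphi(z)-\nabla\varphi(x)\|_*^2\le D_\varphi(x,z)$, then symmetrize and apply the generalized Cauchy--Schwarz inequality. (A minor remark: attainment of the infimum in the dual computation is not even needed, since $\psi(x)\le\inf_v(\cdot)$ already follows from $\psi(x)\le\psi(v)$ for all $v$; and your up-front caveat that (ii) and (iii) presuppose differentiability, which the lemma statement leaves implicit, is a fair reading.)
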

	
	\begin{proof}
		See \cite[Corollary 3.5.11 and Remark 3.5.3]{Z02}.
	\end{proof}

	\subsection{Convergence for nonnegative star-convex functions}
	\label{subsec:star-convex}
	
	In this subsection we assume in addition that the functions $f_i$ are either nonnegative and star-convex or affine. 
	
	\begin{defi}[\cite{NP06}]
		\label{as:Nonneg_starconvex_nonpos_starconcave}
		Let $f\colon D\to\RR$ be differentiable. We say that $f$ is called star-convex, if the set $\argmin f$ is nonempty and for all $x\in D$ and $\hat x\in\argmin f$ it holds that 
		\begin{align*}
			f(x) + \langle \nabla f(x), \hat x - x \rangle \leq f(\hat x).
		\end{align*}
		Moreover, we call $f$ strictly star-convex, if the above inequality is strict, and $\mu$- strongly star-convex relative to $\varphi$, if for all $x\in D$, $x^*\in\partial\varphi(x)$ and $\hat x\in\argmin f$ it holds that 
		\begin{align*}
			f(x) + \langle \nabla f(x), \hat x - x \rangle + \mu D_\varphi^{x^*}(x,\hat x) \leq f(\hat x).
		\end{align*}
	\end{defi}
	
	We recall that the first assumption of nonnegativity and star-convexity covers two settings: 
	\begin{itemize}
		\item Minimizing a sum-of-terms \begin{equation}\label{eq:sum-of-terms}
			\min \frac{1}{n} \sum_{i=1}^n f_i(x)\qquad \text{s.t. } x\in C,
		\end{equation}  
		under the interpolation assumption \begin{equation} \label{eq:interpolation}
			\text{ }\exists x: \quad x\in\bigcap_{i=1}^n \argmin f_i|_C, 
		\end{equation} 
		where $f_i$ is a star-convex function with known optimal value $\hat f_i$ on $C$. Under the interpolation assumption every point in the intersection on the right hand side of~\eqref{eq:interpolation} 
		is a solution to~\eqref{eq:sum-of-terms}.
		Furthermore, we will construct a sequence which converges to this intersection point by applying Algorithm~\ref{alg:NBK} to the nonnegative function $\tilde f$ where $\tilde f_i = f_i|_C - \hat f_i$. When  $n=1$, we cover the setting of mirror descent for the problem
		\begin{align}
			\label{eqn:problem_n=1}
			\min f(x) \quad \text{s.t. } x\in C
		\end{align}
		with known optimal value $\hat f$.
		\item Systems of nonlinear equations \[ f(x) = 0 \qquad \text{s.t. } x\in C \]
		with star-convex component functions $f_i$, where we apply Algorithm~\ref{alg:NBK} to $f_i^+  = \max(f_i,0)$. Note that $f_i^+$ is not differentiable only at points $x$ with $f_i(x)=0$, which is anyway checked during the method.
	\end{itemize}

	Precisely, we will use the following assumption.
	\begin{as}
		\label{as:Group_of_assumptions_on_f_nonnegative_and_star_convex_part}
		For each $f_i$ one of the following conditions is fulfilled:
		\begin{enumerate}[(i)]
			\item $f_i$ is nonnegative and star-convex and it holds that $f_i^{-1}(0) \cap \dom\partial\varphi\neq\emptyset$,
			\item $f_i$ is nonnegative and strictly star-convex or 
			\item $f_i$ is affine.
		\end{enumerate}
	\end{as}

	The first theorem states that Algorithm~\ref{alg:NBK} always computes nonrelaxed Bregman projections under Assumption~\ref{as:Group_of_assumptions_on_f_nonnegative_and_star_convex_part} outside of the fixed points.

	\begin{thm}
		\label{thm:Definedness_nonnegative_starconvex}
		Let $\big(x_k,x_k^*\big)$ be given by Algorithm~\ref{alg:NBK} and consider the case that $f_{i_k}(x)\neq 0$ and $\nabla f_{i_k}(x)\neq 0$. Let Assumptions~\ref{as:Standard_assumptions}-\ref{as:Group_of_assumptions_on_f_nonnegative_and_star_convex_part} hold true. Then, the hyperplane $H_k$ separates $x_k$ and $f_{i_k}^{-1}(0)$, the condition 
		\begin{align*}
			H_k \cap \dom\partial\varphi\neq\emptyset
		\end{align*}
		holds and the Bregman projection of $x_k$ onto $H_k$ is defined, namely it holds that \[ x_{k+1}=\Pi_{\varphi,H_k}^{x_k^*}(x_k). \]
		In particular, Algorithm~\ref{alg:NBK} always chooses the step size $t_k=t_{k,\varphi}$ from~\eqref{eqn:BregProj_stepsize}.
	\end{thm}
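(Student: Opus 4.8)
The plan is to verify, separately in each of the three situations of Assumption~\ref{as:Group_of_assumptions_on_f_nonnegative_and_star_convex_part}, the qualification $H_k\cap\dom\partial\varphi\neq\emptyset$ of~\eqref{eqn:Condition_hyperplane_nonempty_intersection} together with the separation claim; everything else is then automatic. Indeed, because $\dom\partial\varphi=\ri\dom\varphi$ by Assumption~\ref{as:Standard_assumptions}(iii) and $\nabla f_{i_k}(x_k)\neq 0$, Proposition~\ref{prop:t_min_problem_abstract_hyperplane} (applied with $\alpha=\nabla f_{i_k}(x_k)$, $\beta=\beta_k$) will give that $x_{k+1}=\Pi_{\varphi,H_k}^{x_k^*}(x_k)$ exists, is unique, and is produced by the line search~\eqref{eqn:BregProj_stepsize}, so Algorithm~\ref{alg:NBK} takes the branch $t_k=t_{k,\varphi}$. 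Throughout I would write $\ell_k(x):=f_{i_k}(x_k)+\langle\nabla f_{i_k}(x_k),x-x_k\rangle$, so that $H_k=\ell_k^{-1}(0)$, $\ell_k(x_k)=f_{i_k}(x_k)$, and $t\mapsto\ell_k\big((1-t)x_k+ty\big)$ is affine, and I would use two standard convex-analysis facts: (a) $\dom\partial\varphi=\ri\dom\varphi$ is convex; (b) if $u\in\ri\dom\varphi$ and $v\in C=\overline{\dom\varphi}$, then $(1-t)u+tv\in\ri\dom\varphi$ for $t\in[0,1)$ (\cite[Theorem~6.1]{Rockafellar70}).

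For case~(i) I would take $\hat x\in f_{i_k}^{-1}(0)\cap\dom\partial\varphi$; since $f_{i_k}\ge 0$ this is a minimizer, so star-convexity (Definition~\ref{as:Nonneg_starconvex_nonpos_starconcave}) gives $\ell_k(z)\le f_{i_k}(z)=0$ for all $z\in f_{i_k}^{-1}(0)$, while $\ell_k(x_k)=f_{i_k}(x_k)>0$ since $f_{i_k}(x_k)\neq 0$ — this is exactly the separation of $x_k$ and $f_{i_k}^{-1}(0)$ by $H_k$. Since $[x_k,\hat x]\subseteq\dom\partial\varphi$ by~(a), and the affine restriction of $\ell_k$ to this segment is positive at $x_k$ and $\le 0$ at $\hat x$, it has a zero in $[x_k,\hat x]\subseteq\dom\partial\varphi$, giving the qualification.

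Case~(ii) is the delicate one, since no zero of $f_{i_k}$ need lie in $\dom\partial\varphi$. Here I would instead use Assumption~\ref{as:Standard_assumptions}(vi) to get $\hat x\in S$, which satisfies $f_{i_k}(\hat x)=0$, hence $\hat x\in\argmin f_{i_k}\cap C$. Because $\nabla f_{i_k}(x_k)\neq 0$, the point $x_k$ is not a minimizer of $f_{i_k}$, so \emph{strict} star-convexity applies and gives $\ell_k(z)<f_{i_k}(z)=0$ for all $z\in f_{i_k}^{-1}(0)$ (separation as before), whereas $\ell_k(x_k)=f_{i_k}(x_k)>0$. Now~(b) gives $\{(1-t)x_k+t\hat x:0\le t<1\}\subseteq\ri\dom\varphi=\dom\partial\varphi$, and since the affine restriction of $\ell_k$ is $>0$ at $x_k$ and $<0$ at $\hat x$, its unique zero occurs at some $t_0\in(0,1)$, i.e.\ at a point of $\dom\partial\varphi$; this yields $H_k\cap\dom\partial\varphi\neq\emptyset$.

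For case~(iii), $f_{i_k}$ is affine, the linearization is exact, so $\ell_k=f_{i_k}$ and $H_k=f_{i_k}^{-1}(0)$; then $x_k\notin H_k$ makes the separation immediate, and $\emptyset\neq S\subseteq C\cap H_k$ together with $\nabla f_{i_k}(x_k)\neq 0$ gives $H_k\cap\dom\partial\varphi\neq\emptyset$ — the same relative-interior requirement as in~(i), automatic when $\dom\varphi=\RR^d$ as in the sparse-Kaczmarz setting of Example~\ref{ex:varphi_from_sparse_Kaczmarz}. With the qualification established in all three cases, $H_k\cap\ri\dom\varphi\neq\emptyset$ and Proposition~\ref{prop:t_min_problem_abstract_hyperplane} finishes the proof as indicated.

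I expect the main obstacle to be precisely case~(ii): the only zero of $f_{i_k}$ one can produce, via $S$, lies in $C=\overline{\dom\partial\varphi}$ and may sit on the relative boundary of $\dom\varphi$, so the closed-segment argument of case~(i) fails. The fix is to invoke strict star-convexity so that the (unique) zero of the affine function $\ell_k$ is forced strictly into the open segment $(x_k,\hat x)$, which lies in $\ri\dom\varphi$ by fact~(b); this is the structural reason why case~(ii) is stated with a strict inequality while case~(i) only requires plain star-convexity. Once the qualification is in hand, the separation follows by rereading the (strict) star-convexity inequality, and the identification $t_k=t_{k,\varphi}$ is nothing but Proposition~\ref{prop:t_min_problem_abstract_hyperplane}.
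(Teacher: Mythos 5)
Your argument matches the paper's proof essentially step for step: the same affine linearization $\ell$, the same segment/intermediate-value argument in case (i), the same use of strict star-convexity together with \cite[Theorem 6.1]{Rockafellar70} to force the intersection point into $\ri\dom\varphi$ in case (ii), and the same reduction $\ell=f_{i_k}$ in the affine case, all finished off by Proposition~\ref{prop:t_min_problem_abstract_hyperplane}. Even the loose end you flag in case (iii) --- that $S\subseteq C=\overline{\dom\partial\varphi}$ only yields a point of $H_k$ in the closure rather than in $\dom\partial\varphi$ itself --- is shared with the paper, whose own proof likewise asserts condition~\eqref{eqn:Condition_hyperplane_nonempty_intersection} there from $f_{i_k}^{-1}(0)\neq\emptyset$ without further justification.
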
 
	
	\begin{proof}
		For $x\in D$ we define the affine function
		\begin{align*}
			\ell_x(y) := f_{i_k}(x) + \langle \nabla f_{i_k}(x), y-x\rangle.
		\end{align*}
		We consider the cases (i) and (ii) from Assumption~\ref{as:Group_of_assumptions_on_f_nonnegative_and_star_convex_part} first. Here we have that $\ell_{x_k}(x_k)=f_{i_k}(x_k)> 0$ and for all $\hat x\in f_{i_k}^{-1}(0)$, star-convexity of $f_{i_k}$ shows that $\ell_{x_k}(\hat x) \leq 0$. This means that the hyperplane~$H_k$ separates $x_k$ and $f_{i_k}^{-1}(0)$. By the intermediate value theorem there exists $x^{\lambda} = \lambda x_k + (1-\lambda)\hat x$ for some $\lambda\in [0,1[$ such that $\ell_{x_k}(x^{\lambda})=0$. Now let us assume that Assumption~\ref{as:Group_of_assumptions_on_f_nonnegative_and_star_convex_part}(i) holds, so we can choose $\hat x\in\dom\partial\varphi$ with $f_{i_k}(\hat x)=0$. By Assumption~\ref{as:Standard_assumptions}(iii) it holds that $\dom\partial\varphi=\ri\dom\varphi$, which is a convex set and hence, $x^{\lambda}\in\dom\partial\varphi$. This proves that the claimed condition~\eqref{eqn:Condition_hyperplane_nonempty_intersection} is fulfilled and the update in Algorithm~\ref{alg:NBK} computes a Bregman projection onto $H_k$ by Proposition~\ref{prop:t_min_problem_abstract_hyperplane}. In case of Assumption~\ref{as:Group_of_assumptions_on_f_nonnegative_and_star_convex_part}(ii) we have that $\ell_{x_k}(\hat x)<0$ and therefore it even holds that $\lambda\in ]0,1[$. Assumption~\ref{as:Standard_assumptions}(iii) guarantees that $\hat x \in \overline{\dom\varphi}$ and $x_k\in\dom\partial\varphi = \ri\dom\varphi$. Hence, we have $x^{\lambda}\in \ri\dom\varphi$ by \cite[Theorem 6.1]{Rockafellar70}, which again implies that $x^{\lambda}\in\dom\partial\varphi$ by Assumption~\ref{as:Standard_assumptions}(iii). This proves that condition~\eqref{eqn:Condition_hyperplane_nonempty_intersection} is fulfilled in this case, too. \\
		Under Assumption~\ref{as:Group_of_assumptions_on_f_nonnegative_and_star_convex_part}(iii) it holds that $\ell_x = f_{i_k}$ for all $x\in D$. This already implies that $H_k$ separates $x_k$ and $f_{i_k}^{-1}(0)$. Condition~\eqref{eqn:Condition_hyperplane_nonempty_intersection} is fulfilled by the assumption that $f_{i_k}^{-1}(0)\neq\emptyset$ and so, the update computes the claimed Bregman projection by Proposition~\ref{prop:t_min_problem_abstract_hyperplane} also in this case.
	\end{proof}
	
	As an immediate consequence, we see that Algorithm~\ref{alg:NBK} is stable in terms of Bregman distance. 
	
	\begin{cor}
		If $\hat x\in S$ is a solution to~\eqref{eqn:problem} and the assumptions from Theorem~\ref{thm:Definedness_nonnegative_starconvex} hold true,
		then it holds that
		\begin{align*}
			D_\varphi^{x_{k+1}^*}(x_{k+1},\hat x) \leq D_\varphi^{x_k^*}(x_k,\hat x) - D_\varphi^{x_k^*}(x_k,x_{k+1}).
		\end{align*}
	\end{cor}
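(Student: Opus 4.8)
The plan is to reduce everything to the variational characterization of Bregman projections in Lemma~\ref{lem:BregmanProj_VarInequality}(ii). By Theorem~\ref{thm:Definedness_nonnegative_starconvex}, in the situation considered the update is the genuine Bregman projection $x_{k+1}=\Pi_{\varphi,H_k}^{x_k^*}(x_k)$ onto the hyperplane $H_k=H(\nabla f_{i_k}(x_k),\beta_k)$. The one point that needs care is that Lemma~\ref{lem:BregmanProj_VarInequality}(ii) delivers a descent inequality only for points $y$ lying in the projection set, whereas for a nonlinear component $f_{i_k}$ the solution $\hat x$ need not lie on the linearized hyperplane $H_k$; it only satisfies $f_{i_k}(\hat x)=0$. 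So I would first locate $\hat x$ relative to $H_k$: since $\hat x\in S$ forces $f_{i_k}(\hat x)=0$ and hence $\hat x\in\argmin f_{i_k}$ under cases (i),(ii) of Assumption~\ref{as:Group_of_assumptions_on_f_nonnegative_and_star_convex_part} (trivially under (iii)), star-convexity gives $f_{i_k}(x_k)+\langle\nabla f_{i_k}(x_k),\hat x-x_k\rangle\leq f_{i_k}(\hat x)=0$, i.e. $\hat x\in H_k^\leq$ — this is exactly the separation already recorded in Theorem~\ref{thm:Definedness_nonnegative_starconvex}.

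From here there are two short routes. The first bridges hyperplane and halfspace: since $f_{i_k}(x_k)>0$, the point $x_k$ lies strictly on the positive side of $H_k$, so $x_k\notin H_k^\leq$, and the Bregman projection of $x_k$ onto the halfspace $H_k^\leq$ must land on its boundary $H_k$; by uniqueness it equals $x_{k+1}$. Applying Lemma~\ref{lem:BregmanProj_VarInequality}(ii) with $E=H_k^\leq$ and then specializing $y=\hat x\in H_k^\leq$ gives the claim. The second route avoids halfspaces altogether and uses the three-point identity
\[ D_\varphi^{x_k^*}(x_k,\hat x) = D_\varphi^{x_k^*}(x_k,x_{k+1}) + D_\varphi^{x_{k+1}^*}(x_{k+1},\hat x) + \langle x_{k+1}^*-x_k^*,\ \hat x-x_{k+1}\rangle, \]
obtained by expanding the three Bregman distances. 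Here $x_{k+1}^*-x_k^*=-t_k\nabla f_{i_k}(x_k)$ with $t_k>0$ by Lemma~\ref{lem:sign_of_stepsize} (as $f_{i_k}(x_k)>0$), while $x_{k+1}\in H_k$ gives $\langle\nabla f_{i_k}(x_k),x_{k+1}\rangle=\beta_k$ and $\hat x\in H_k^\leq$ gives $\langle\nabla f_{i_k}(x_k),\hat x\rangle\leq\beta_k$; hence $\langle x_{k+1}^*-x_k^*,\hat x-x_{k+1}\rangle=-t_k\big(\langle\nabla f_{i_k}(x_k),\hat x\rangle-\beta_k\big)\geq 0$, and the identity collapses to the asserted inequality.

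The only genuine obstacle is the observation from the first paragraph that $\hat x$ generally lies in the halfspace $H_k^\leq$ rather than on $H_k$ itself; once that is settled, everything after is routine bookkeeping, and the corollary really is immediate given Theorem~\ref{thm:Definedness_nonnegative_starconvex} and Lemma~\ref{lem:BregmanProj_VarInequality}. I would present the three-point-identity version in the write-up, since it is self-contained and makes the role of $t_k>0$ transparent.
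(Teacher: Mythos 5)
Your proposal is correct and follows essentially the paper's own argument: the paper likewise observes $\hat x\in H_k^\le$ via star-convexity and $t_k>0$ via Lemma~\ref{lem:sign_of_stepsize}, verifies the variational inequality $\langle x_{k+1}^*-x_k^*,x_{k+1}-y\rangle\le 0$ for $y\in H_k^\le$ (your first route) and invokes Lemma~\ref{lem:BregmanProj_VarInequality}(ii); your preferred three-point-identity version merely inlines the implication (i)$\Rightarrow$(ii) of that lemma, since the cross term $\langle x_{k+1}^*-x_k^*,\hat x-x_{k+1}\rangle$ is exactly the quantity whose sign the paper checks. The only loose ends are the degenerate case $f_{i_k}(x_k)=0$ (then $x_{k+1}=x_k$ and the claim is trivial) and the case $f_{i_k}(x_k)<0$, possible only for affine $f_{i_k}$, where $\hat x\in H_k$ exactly, the cross term vanishes, and your identity still closes the argument --- the paper dispatches this with ``an analogous argument.''
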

	\begin{proof} 
		By Theorem~\ref{thm:Definedness_nonnegative_starconvex}, $x_{k+1}$ is the Bregman projection of $x_k$ onto $H_{k}$ with respect to $x_k^*$. If $f_{i_k}(x_k)=0$, then $x_k$ is a fixed point by Proposition~\ref{prop:Fixed_points}and the statement holds trivially. Next, assume that $f_{i_k}(x_k)>0$. Then by Lemma~\ref{lem:sign_of_stepsize}, we have that $t_k > 0$. As $x_{k+1}\in H$, we have that $\langle \nabla f_{i_k}(x_k), x_{k+1}-x_k\rangle=0$. We conclude for all $y \in H_k^\leq$ that 
		\begin{align*}
			\langle x_{k+1}^*-x_k^*, x_{k+1}-y\rangle 
			&= -t_k\langle \nabla f_{i_k}(x_k), x_{k+1}-y\rangle\\ 
			&=  t_k\langle \nabla f_{i_k}(x_k), y-x_k\rangle
			-t_k\langle \nabla f_{i_k}(x_k), x_{k+1}-x_k\rangle \\
			&\leq -t_k f_{i_k}(x_k) \\
			&\leq 0,
		\end{align*}
		which by Lemma~\ref{lem:BregmanProj_VarInequality} shows that $x_{k+1} = \Pi_{\varphi,H^\leq_k}^{x_k^*}(x_k)$. As $\hat x\in H^\leq_k$, the claim follows from 
		Lemma~\ref{lem:BregmanProj_VarInequality}(ii). An analoguous argument shows the claim in the case $f_{i_k}(x_k)<0$.
	\end{proof}

	Next, we prove that the exact Bregman projection moves the iterates closer to solutions of~\eqref{eqn:problem} than the relaxed projections, where the distance is in the sense of the used Bregman distance (see Theorem~\ref{thm:Comparison_dist_to_sol_Bregman_mSPS}). To that end, for $(x_k,x_k^*)$ from Algorithm~\ref{alg:NBK} we define an update with variable step size 
	\begin{align}
		\label{eqn:Update_with_variable_stepsize}
		x_{k+1}^*(t) = x_k^* - t\nabla f_{i_k}(x_k), \qquad x_{k+1}(t) = \nabla\varphi^*(x_{k+1}^*(t)), \quad t\in\RR.
	\end{align}
	
	\begin{lem}
		\label{lem:Basic_Dphi_descent_estimate}
		Let $\big(x_k,x_k^*\big)$ be given by Algorithm~\ref{alg:NBK} and consider $(x_{k+1}(t),x_{k+1}^*(t))$ from~\eqref{eqn:Update_with_variable_stepsize} for some $t\in\RR$. 
		Let Assumption~\ref{as:Standard_assumptions} hold true. Then, for all $x\in \RR^d$ it holds that
		\begin{align*}
			D_\varphi^{x_{k+1}^*(t)}(x_{k+1}(t),x) 
			&= \varphi^*(x_k^*-t\nabla f_{i_k}(x_k)) + t\beta_k + t\langle\nabla f_{i_k}(x_k), x-x_k\rangle  \\
			& \hspace{1cm} + t f_{i_k}(x_k) - \langle x_k^*,x\rangle + \varphi(x). 
		\end{align*}
	\end{lem}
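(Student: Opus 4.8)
The plan is to expand the Bregman distance $D_\varphi^{x_{k+1}^*(t)}(x_{k+1}(t),x)$ using the conjugate representation~\eqref{eqn:BregDist_with_conjugate}, namely
\[
D_\varphi^{z^*}(z,x) = \varphi^*(z^*) - \langle z^*,x\rangle + \varphi(x),
\]
which is valid for $z^* = x_{k+1}^*(t) \in \partial\varphi(x_{k+1}(t))$ since $x_{k+1}(t) = \nabla\varphi^*(x_{k+1}^*(t))$ and subgradient inversion gives $x_{k+1}^*(t)\in\partial\varphi(x_{k+1}(t))$. Substituting $z^* = x_k^* - t\nabla f_{i_k}(x_k)$ directly yields
\[
D_\varphi^{x_{k+1}^*(t)}(x_{k+1}(t),x) = \varphi^*(x_k^*-t\nabla f_{i_k}(x_k)) - \langle x_k^*-t\nabla f_{i_k}(x_k),x\rangle + \varphi(x).
\]

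The remaining work is purely algebraic: I would rewrite the inner-product term $-\langle x_k^* - t\nabla f_{i_k}(x_k),x\rangle = -\langle x_k^*,x\rangle + t\langle \nabla f_{i_k}(x_k),x\rangle$, and then split $t\langle \nabla f_{i_k}(x_k),x\rangle = t\langle \nabla f_{i_k}(x_k),x-x_k\rangle + t\langle \nabla f_{i_k}(x_k),x_k\rangle$. Recalling the definition $\beta_k = \langle \nabla f_{i_k}(x_k),x_k\rangle - f_{i_k}(x_k)$ from~\eqref{eqn:beta_k}, we have $t\langle \nabla f_{i_k}(x_k),x_k\rangle = t\beta_k + t f_{i_k}(x_k)$. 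Collecting all terms gives exactly
\[
\varphi^*(x_k^*-t\nabla f_{i_k}(x_k)) + t\beta_k + t\langle \nabla f_{i_k}(x_k),x-x_k\rangle + t f_{i_k}(x_k) - \langle x_k^*,x\rangle + \varphi(x),
\]
which is the claimed identity.

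There is essentially no obstacle here; the only point requiring a word of justification is that $x_{k+1}^*(t)$ is indeed a subgradient of $\varphi$ at $x_{k+1}(t)$, so that Fenchel's equality (and hence~\eqref{eqn:BregDist_with_conjugate}) applies. This follows from the standing assumptions: supercoercivity and essential strict convexity of $\varphi$ make $\varphi^*$ finite and differentiable on all of $\RR^d$, so $\nabla\varphi^*(x_{k+1}^*(t))$ is well defined, and the identity $(\partial\varphi)^{-1} = \nabla\varphi^*$ then gives $x_{k+1}^*(t)\in\partial\varphi(x_{k+1}(t))$ and $x_{k+1}(t)\in\dom\partial\varphi\subset\dom\varphi$. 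The identity holds for every $x\in\RR^d$, with both sides being $+\infty$ when $x\notin\dom\varphi$.
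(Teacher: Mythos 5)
Your proof is correct and follows exactly the paper's argument: expand the Bregman distance via the conjugate representation~\eqref{eqn:BregDist_with_conjugate}, substitute $x_{k+1}^*(t)=x_k^*-t\nabla f_{i_k}(x_k)$, and rearrange using the definition of $\beta_k$. The extra remark justifying that $x_{k+1}^*(t)\in\partial\varphi(x_{k+1}(t))$ via $(\partial\varphi)^{-1}=\nabla\varphi^*$ is a welcome (if implicit in the paper) addition.
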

	\begin{proof}
		Rewriting the Bregman distance as in~\eqref{eqn:BregDist_with_conjugate} shows that
		\begin{align*}
			D_\varphi^{x_{k+1}^*(t)}(x_{k+1}(t),x) 
			&= \varphi^*(x_{k+1}^*(t)) - \langle x_{k+1}^*(t),x\rangle + \varphi(x) \\
			& = \varphi^*(x_k^*-t\nabla f_{i_k}(x_k))  + t\langle \nabla f_{i_k}(x_k),x\rangle - \langle x_k^*,x\rangle + \varphi(x) \\
			&= \varphi^*(x_k^*-t\nabla f_{i_k}(x_k)) + t\beta_k + t\langle\nabla f_{i_k}(x_k), x-x_k\rangle  \\
			&\hspace{1cm} + t f_{i_k}(x_k) - \langle x_k^*,x\rangle + \varphi(x). 
		\end{align*}
	\end{proof}

	\begin{prop}
		\label{prop:Comparison_dist_to_arbitrary_points_Bregman_SPS}
		Let $\big(x_k,x_k^*\big)$ and $t_k$ be given by Algorithm~\ref{alg:NBK} and consider $\big(x_{k+1}(t),x_{k+1}^*(t)\big)$ from~\eqref{eqn:Update_with_variable_stepsize} for some $t\in\RR$. 
		Let Assumption~\ref{as:Standard_assumptions} hold true. Then, for all $x\in \RR^d$ it holds that
		\begin{align*}
			D_\varphi^{x_{k+1}^*}(x_{k+1},x) \leq D_\varphi^{x_{k+1}^*(t)}(x_{k+1}(t),x) + (t_k-t) \cdot \big( f_{i_k}(x_k) + \langle \nabla f_{i_k}(x_k), x - x_k\rangle \big).
		\end{align*}
	\end{prop}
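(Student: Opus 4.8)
The plan is to reduce the claimed inequality to the defining property of $t_k$, namely that it is a minimizer of the one-dimensional objective $g_{i_k,x_k^*}$ from~\eqref{eqn:g_function}. First I would apply Lemma~\ref{lem:Basic_Dphi_descent_estimate} for a generic step length $s\in\RR$ and simply regroup the terms on its right-hand side. Using $\beta_k=\langle\nabla f_{i_k}(x_k),x_k\rangle-f_{i_k}(x_k)$, the summands $\varphi^*(x_k^*-s\nabla f_{i_k}(x_k))$ and $s\beta_k$ combine to $g_{i_k,x_k^*}(s)$, so that
\[
D_\varphi^{x_{k+1}^*(s)}(x_{k+1}(s),x) = g_{i_k,x_k^*}(s) + s\big(f_{i_k}(x_k)+\langle\nabla f_{i_k}(x_k),x-x_k\rangle\big) + \varphi(x)-\langle x_k^*,x\rangle ,
\]
where the last two terms do not depend on $s$. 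Taking $s=t_k$ recovers the actual iterate, i.e. $D_\varphi^{x_{k+1}^*}(x_{k+1},x)=D_\varphi^{x_{k+1}^*(t_k)}(x_{k+1}(t_k),x)$; this is the only bookkeeping point that needs care, since one must recognize that exactly the linear-in-$s$ coefficient $f_{i_k}(x_k)+\langle\nabla f_{i_k}(x_k),x-x_k\rangle$ is what makes the comparison collapse onto $g_{i_k,x_k^*}$.

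Subtracting the instance $s=t$ from the instance $s=t_k$, the $s$-independent part $\varphi(x)-\langle x_k^*,x\rangle$ cancels and one is left with
\[
D_\varphi^{x_{k+1}^*}(x_{k+1},x) - D_\varphi^{x_{k+1}^*(t)}(x_{k+1}(t),x) = g_{i_k,x_k^*}(t_k) - g_{i_k,x_k^*}(t) + (t_k-t)\big(f_{i_k}(x_k)+\langle\nabla f_{i_k}(x_k),x-x_k\rangle\big).
\]
Hence the assertion of the proposition is equivalent to the single inequality $g_{i_k,x_k^*}(t_k)\le g_{i_k,x_k^*}(t)$ for all $t\in\RR$.

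It remains to establish this minimality. Whenever condition~\eqref{eqn:Condition_hyperplane_nonempty_intersection} holds — which, by Theorem~\ref{thm:Definedness_nonnegative_starconvex}, is automatic under Assumption~\ref{as:Group_of_assumptions_on_f_nonnegative_and_star_convex_part}, the setting in which this proposition is used — Algorithm~\ref{alg:NBK} selects $t_k=t_{k,\varphi}$, and $t_{k,\varphi}$ is by definition~\eqref{eqn:BregProj_stepsize} a global minimizer of the convex, differentiable function $g_{i_k,x_k^*}$ (convexity and differentiability were already observed in the proof of Lemma~\ref{lem:sign_of_stepsize}). Therefore $g_{i_k,x_k^*}(t_k)\le g_{i_k,x_k^*}(t)$ for every $t$, and plugging this into the displayed identity yields the claim. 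Beyond the regrouping step above there is no genuine obstacle; the proof is essentially "optimality of $t_k$ for $g_{i_k,x_k^*}$, packaged through Lemma~\ref{lem:Basic_Dphi_descent_estimate}."
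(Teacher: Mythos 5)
Your proof is correct and follows essentially the same route as the paper's: both arguments combine Lemma~\ref{lem:Basic_Dphi_descent_estimate} with the optimality of $t_k=t_{k,\varphi}$ as a minimizer of $g_{i_k,x_k^*}$ (justified via Theorem~\ref{thm:Definedness_nonnegative_starconvex}), differing only in that you subtract two instances of the identity while the paper chains the inequality directly into the evaluation at $t_k$.
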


	\begin{proof}
		Note that $t_k$ equals $t_{k,\varphi}$ from~\eqref{eqn:BregProj_stepsize}, since condition~\eqref{eqn:Condition_hyperplane_nonempty_intersection} is fulfilled by Theorem~\ref{thm:Definedness_nonnegative_starconvex}. 
		Hence, the optimality property~\eqref{eqn:BregProj_stepsize} shows that for any $t$ we have that 
		\[ \varphi^*(x_k^*-t_k\nabla f_{i_k}(x_k)) + t_k\beta_k \leq \varphi^*(x_k^*-t\nabla f_{i_k}(x_k)) + t\beta_k. \] 
		Lemma~\ref{lem:Basic_Dphi_descent_estimate} then shows that for any $x$ it holds that 
		\begin{align*} D_\varphi^{x_{k+1}^*}(x_{k+1},x) 
			&\leq \varphi^*(x_k^*-t\nabla f_{i_k}(x_k)) + t\beta_k + t_k\langle\nabla f_{i_k}(x_k), x-x_k\rangle + t_k f_{i_k}(x_k) 
			\\
			& \hspace{1cm} - \langle x_k^*,x\rangle + \varphi(x). 
		\end{align*}
		We use the definitions of $\beta_k$, $x_{k+1}(t)$ and $x_{k+1}^*(t)$ and get
		\begin{align*}
			D_\varphi^{x_{k+1}^*}(x_{k+1},x) 
			&\leq \varphi^*(x_k^*-t\nabla f_{i_k}(x_k)) + t\beta_k + t_k\langle\nabla f_{i_k}(x_k), x-x_k\rangle + t_k f_{i_k}(x_k)  \\
			& \hspace{1cm} - \langle x_k^*,x\rangle + \varphi(x) 
			\\
			&= \varphi^*(x_k^*-t\nabla f_{i_k}(x_k)) + t\langle \nabla f_{i_k}(x_k), x\rangle - \langle x_k^*,x\rangle + \varphi(x)  \\
			& \hspace{1cm} + (t_k-t) \cdot \big( f_{i_k}(x_k) + \langle \nabla f_{i_k}(x_k), x - x_k\rangle \big)  \\
			&= D_\varphi^{x_{k+1}^*(t)}(x_{k+1}(t),x) + (t_k-t) \cdot \big( f_{i_k}(x_k) + \langle \nabla f_{i_k}(x_k), x - x_k\rangle \big).
		\end{align*}
	\end{proof}
	
	In order to draw a conclusion from Proposition~\ref{prop:Comparison_dist_to_arbitrary_points_Bregman_SPS}, we relate the step sizes $t_{k,\varphi}$ from~\eqref{eqn:BregProj_stepsize} and $t_{k,\sigma}$ from~\eqref{eqn:mSPS_like_stepsize}. We already know by Lemma~\ref{lem:sign_of_stepsize} that both step sizes have the same sign. The next lemma gives upper and lower bounds for $t_{k,\varphi}$ with respect to $t_{k,\sigma}$ under additional assumptions on~$\varphi$.
	
	\begin{lem}
		\label{lem:Bounds_Bregman_SPS}
		Let $\big(x_k,x_k^*\big)$ be the iterates from Algorithm~\ref{alg:NBK} and let Assumption~\ref{as:Standard_assumptions} hold true. We consider $t_{k,\varphi}$ and $t_{k,\sigma}$ from~\eqref{eqn:BregProj_stepsize} and~\eqref{eqn:mSPS_like_stepsize} and the function $g_{i_k,x_k^*}$ from~\eqref{eqn:g_function}. 
		\begin{enumerate}
			\item[\textup{(i)}] 	
			If $\varphi$ is $\sigma$-strongly convex w.r.t. $\|\cdot\|$, then $g_{i_k,x_k^*}$ is $\frac{\|\nabla f_{i_k}(x_k)\|_*^2}{\sigma}$-smooth and 
			\begin{align} 
				\label{eqn:t_k_varphi_lower_bound}
				|t_{k,\varphi}| \geq \sigma \frac{ |f_{i_k}(x_k)| }{\|\nabla f_{i_k}(x_k)\|_*^2} = |t_{k,\sigma}|.
			\end{align}
			\item[\textup{(ii)}]
			If $\varphi$ is $M$-smooth w.r.t. $\|\cdot\|$, then $g_{i_k,x_k^*}$ is $\frac{\|\nabla f_{i_k}(x_k)\|_*^2}{M}$-strongly convex and 
			\begin{align} 
				\label{eqn:t_k_varphi_upper_bound}
				|t_{k,\varphi}| \leq M\frac{|f_{i_k}(x_k)|}{\|\nabla f_{i_k}(x_k)\|_*^2} = \frac{M}{\sigma} \cdot |t_{k,\sigma}|. 
			\end{align}
		\end{enumerate} 
	\end{lem}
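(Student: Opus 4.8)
The plan is to reduce both bounds to one–dimensional estimates for the convex function $g := g_{i_k,x_k^*}$ from~\eqref{eqn:g_function}, exploiting that $g$ is minimized at $t_{k,\varphi}$ (so $g'(t_{k,\varphi})=0$) together with the already computed value $g'(0) = -f_{i_k}(x_k)$ from~\eqref{eqn:g_derivative_at_zero}. Once we know that $g$ is $L$–smooth (resp.\ $\mu$–strongly convex) as a function of one real variable, the one–dimensional inequalities $|g'(t)-g'(s)|\le L|t-s|$ (resp.\ $|g'(t)-g'(s)|\ge\mu|t-s|$) applied with $s=t_{k,\varphi}$ and $t=0$ give $|f_{i_k}(x_k)|\le L\,|t_{k,\varphi}|$ (resp.\ $|f_{i_k}(x_k)|\ge\mu\,|t_{k,\varphi}|$), which are exactly~\eqref{eqn:t_k_varphi_lower_bound} and~\eqref{eqn:t_k_varphi_upper_bound} after plugging in the asserted moduli and recalling $|t_{k,\sigma}| = \sigma|f_{i_k}(x_k)|/\|\nabla f_{i_k}(x_k)\|_*^2$.

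So the actual content is to identify the smoothness/strong-convexity modulus of $g$. Writing $a := \nabla f_{i_k}(x_k)$ and $p(t) := x_k^*-t\,a$, and using that $\varphi^*$ is finite and differentiable on all of $\RR^d$ under Assumption~\ref{as:Standard_assumptions}, the chain rule gives $g'(t) = -\langle a,\nabla\varphi^*(p(t))\rangle + \beta_k$, hence
\[ g'(t)-g'(s) = -\big\langle a,\ \nabla\varphi^*(p(t)) - \nabla\varphi^*(p(s))\big\rangle, \qquad p(t)-p(s) = (s-t)\,a. \]
For part (i), Lemma~\ref{lem:BasicsConvexAnalysis_strongly_convex} turns $\sigma$–strong convexity of $\varphi$ w.r.t.\ $\|\cdot\|$ into $\tfrac1\sigma$–smoothness of $\varphi^*$ w.r.t.\ $\|\cdot\|_*$, i.e.\ $\|\nabla\varphi^*(p)-\nabla\varphi^*(q)\|\le\tfrac1\sigma\|p-q\|_*$; combining this with Hölder's inequality $|\langle a,v\rangle|\le\|a\|_*\|v\|$ and the identity above yields $|g'(t)-g'(s)|\le\tfrac{\|a\|_*^2}{\sigma}|t-s|$, so $g$ is $\tfrac{\|a\|_*^2}{\sigma}$–smooth. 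For part (ii), applying Lemma~\ref{lem:BasicsConvexAnalysis_strongly_convex} to $\varphi^*$ in place of $\varphi$ (using $\varphi^{**}=\varphi$) turns $M$–smoothness of $\varphi$ w.r.t.\ $\|\cdot\|$ into $\tfrac1M$–strong convexity of $\varphi^*$ w.r.t.\ $\|\cdot\|_*$, i.e.\ $\langle\nabla\varphi^*(p)-\nabla\varphi^*(q),\,p-q\rangle\ge\tfrac1M\|p-q\|_*^2$; substituting $p=p(t)$, $q=p(s)$ into the identity gives $(g'(t)-g'(s))(t-s) = \langle\nabla\varphi^*(p(t))-\nabla\varphi^*(p(s)),\,p(t)-p(s)\rangle\ge\tfrac{\|a\|_*^2}{M}(t-s)^2$, so $g$ is $\tfrac{\|a\|_*^2}{M}$–strongly convex, and in particular $|g'(t)-g'(s)|\ge\tfrac{\|a\|_*^2}{M}|t-s|$.

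I do not expect a serious obstacle; the one thing to handle carefully is the bookkeeping of primal versus dual norms when $\varphi^*$ is precomposed with the affine map $t\mapsto p(t)$ and Hölder's inequality is invoked, together with the conversions between strong convexity and smoothness of conjugates. As an alternative for the final step of part (ii) one could use that $t_{k,\varphi}$ and $f_{i_k}(x_k)$ share the same sign by Lemma~\ref{lem:sign_of_stepsize} and argue directly from $\mu$–strong convexity of $g$ evaluated between $0$ and $t_{k,\varphi}$, but the derivative estimate above already takes care of the sign automatically.
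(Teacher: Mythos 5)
Your proposal is correct and follows essentially the same route as the paper: both reduce the claim to establishing the smoothness/strong-convexity modulus of the one-dimensional function $g_{i_k,x_k^*}$ via the conjugate duality of Lemma~\ref{lem:BasicsConvexAnalysis_strongly_convex}, and then evaluate the resulting derivative bound between $0$ and $t_{k,\varphi}$ using $g_{i_k,x_k^*}'(0)=-f_{i_k}(x_k)$ and $g_{i_k,x_k^*}'(t_{k,\varphi})=0$. The only cosmetic difference is that in part (i) you invoke the Lipschitz-gradient form of smoothness of $\varphi^*$ together with H\"older's inequality, whereas the paper applies the monotonicity characterization of Lemma~\ref{lem:BasicsConvexAnalysis_convex_Lsmooth}(iii) directly to the inner product $\langle \nabla\varphi^*(p(t))-\nabla\varphi^*(p(s)),\,p(t)-p(s)\rangle$; these are equivalent and yield the same constant.
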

	
	\begin{proof}
		For $s,t\in\RR$ with $s<t$ it holds that
		\begin{align}
			g_{i_k,x_k^*}'(t)-g_{i_k,x_k^*}'(s) 
			&= \langle 
			\nabla \varphi^*(x_k^*-t\nabla f_{i_k}(x_k)) 
			-\nabla \varphi^*(x_k^*-s\nabla f_{i_k}(x_k)), -\nabla f_{i_k}(x_k)
			\rangle \nonumber \\
			&= \frac{1}{(t-s)} \big\langle
			\nabla \varphi^*(x_k^*-t\nabla f_{i_k}(x_k)) -\nabla \varphi^*(x_k^*-s\nabla f_{i_k}(x_k)), \nonumber \\
			& \hspace{2.65cm} x_k^*-t\nabla f_{i_k}(x_k) - (x_k^*-s\nabla f_{i_k}(x_k))
			\big\rangle. \label{eqn:g_derivative_diff}
		\end{align}
		\begin{enumerate}
			\item[\textup{(i)}] If $\varphi$ is $\sigma$-strongly convex w.r.t. $\|\cdot\|$, then $\varphi^*$ is $\frac1\sigma$-smooth w.r.t. $\|\cdot\|_*$ by Lemma~\ref{lem:BasicsConvexAnalysis_strongly_convex}(iv). Hence, by Lemma~\ref{lem:BasicsConvexAnalysis_convex_Lsmooth}(iii) we can estimate 
			\begin{align*}
				0 \leq g_{i_k,x_k^*}'(t)-g_{i_k,x_k^*}'(s) \leq \frac{\|(t-s)\nabla f_{i_k}(x_k)\|_*^2}{\sigma\cdot(t-s)}  = \frac{\|\nabla f_{i_k}(x_k)\|_*^2}{\sigma} \cdot (t-s),
			\end{align*}
			which proves by the same lemma that $g_{i_k,x_k^*}$ is $\frac{\|\nabla f_{i_k}(x_k)\|_*^2}{\sigma}$-smooth. Hence, \eqref{eqn:t_k_varphi_lower_bound} follows by choosing $t=\max(t_{k,\varphi},0)$, $s=\min(t_{k,\varphi},0)$ and inserting~\eqref{eqn:g_derivative_at_zero}. 
			
			\item[(ii)] Here, by Lemma~\ref{lem:BasicsConvexAnalysis_strongly_convex} and the Fenchel-Moreau-identity $\varphi=\varphi^{**}$, the function $\varphi^*$ is $\frac1M$-strongly convex w.r.t. $\|\cdot\|_*$. Using Lemma~\ref{lem:BasicsConvexAnalysis_strongly_convex}(ii) we can estimate
			\begin{align*}
				g_{i_k,x_k^*}'(t)-g_{i_k,x_k^*}'(s) \geq \frac{\|(t-s)\nabla f_{i_k}(x_k)\|_*^2}{M\cdot(t-s)}  = \frac{\|\nabla f_{i_k}(x)\|_*^2}{M} \cdot (t-s)
			\end{align*}
			which shows that $g_{i_k,x_k^*}$ is $\frac{\|\nabla f_{i_k}(x_k)\|_*^2}{M}$-strongly convex. Inequality~\eqref{eqn:t_k_varphi_upper_bound} then follows as in \textup{(i)}.
		\end{enumerate}
	\end{proof}

	\begin{thm}
		\label{thm:Comparison_dist_to_sol_Bregman_mSPS}
		Let $\big(x_k,x_k^*\big)$ and $t_k$ be given by Algorithm~\ref{alg:NBK}. Let $t\in [0,t_k]$ and let $\big(x_{k+1}(t),x_{k+1}^*(t)\big)$ be as in~\eqref{eqn:Update_with_variable_stepsize}.
		Let Assumptions~\ref{as:Standard_assumptions}-\ref{as:Group_of_assumptions_on_f_nonnegative_and_star_convex_part} hold true and assume that $f_{i_k}(x_k)>0$. 
		Then for every solution $\hat x\in S$ it holds that 
		\begin{align*}
			D_\varphi^{x_{k+1}^*}(x_{k+1}, \hat x) \leq D_\varphi^{x_{k+1}^*(t)}(x_{k+1}(t), \hat x).
		\end{align*}
		If $\varphi$ is $\sigma$-strongly convex, the inequality holds as well for $t=t_{k,\sigma}$.
	\end{thm}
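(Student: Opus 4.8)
The plan is to invoke Proposition~\ref{prop:Comparison_dist_to_arbitrary_points_Bregman_SPS} with the point $x = \hat x$ and show that the extra term appearing there is nonpositive. That proposition gives, for every $t\in\RR$,
\[
D_\varphi^{x_{k+1}^*}(x_{k+1},\hat x) \leq D_\varphi^{x_{k+1}^*(t)}(x_{k+1}(t),\hat x) + (t_k - t)\cdot\big(f_{i_k}(x_k) + \langle \nabla f_{i_k}(x_k), \hat x - x_k\rangle\big),
\]
so it suffices to prove that $(t_k-t)\cdot\big(f_{i_k}(x_k) + \langle \nabla f_{i_k}(x_k), \hat x - x_k\rangle\big)\le 0$.

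For the first factor: since $f_{i_k}(x_k)>0$ by assumption, Lemma~\ref{lem:sign_of_stepsize} yields $t_k>0$, and hence $t_k - t\ge 0$ whenever $t\in[0,t_k]$. For the second factor, I would reuse the argument already carried out in the proof of Theorem~\ref{thm:Definedness_nonnegative_starconvex}: writing $\ell_{x_k}(y) = f_{i_k}(x_k) + \langle\nabla f_{i_k}(x_k), y-x_k\rangle$, in cases (i) and (ii) of Assumption~\ref{as:Group_of_assumptions_on_f_nonnegative_and_star_convex_part} the function $f_{i_k}$ is nonnegative, so $f_{i_k}(\hat x)=0$ (because $\hat x\in S$) forces $\hat x\in\argmin f_{i_k}$, and star-convexity gives $\ell_{x_k}(\hat x)\le f_{i_k}(\hat x) = 0$; in case (iii) $\ell_{x_k}=f_{i_k}$, so $\ell_{x_k}(\hat x)=f_{i_k}(\hat x)=0$. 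In all cases $\ell_{x_k}(\hat x)\le 0$, so the product of the two factors is $\le 0$ and the claimed inequality follows.

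For the last assertion, take $t = t_{k,\sigma}$. Since $f_{i_k}(x_k)>0$, the definition~\eqref{eqn:mSPS_like_stepsize} gives $t_{k,\sigma}>0$, and by Theorem~\ref{thm:Definedness_nonnegative_starconvex} we have $t_k = t_{k,\varphi}$, while Lemma~\ref{lem:Bounds_Bregman_SPS}(i) gives $t_{k,\varphi}\ge t_{k,\sigma}>0$ once $\varphi$ is $\sigma$-strongly convex. Hence $t_{k,\sigma}\in[0,t_k]$ and the first part applies verbatim. I do not expect a genuine obstacle here: the proof is an assembly of Proposition~\ref{prop:Comparison_dist_to_arbitrary_points_Bregman_SPS}, Lemma~\ref{lem:sign_of_stepsize}, the star-convexity estimate from Theorem~\ref{thm:Definedness_nonnegative_starconvex}, and—only for the $t_{k,\sigma}$ case—the lower bound in Lemma~\ref{lem:Bounds_Bregman_SPS}(i); the only point requiring a moment's care is verifying $t_{k,\sigma}\le t_k$ so that the chosen step actually lies in the admissible interval $[0,t_k]$.
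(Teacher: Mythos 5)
Your proposal is correct and follows essentially the same route as the paper's proof: apply Proposition~\ref{prop:Comparison_dist_to_arbitrary_points_Bregman_SPS} at $x=\hat x$, use Lemma~\ref{lem:sign_of_stepsize} to get $t_k>0$, use star-convexity (resp.\ affinity) of $f_{i_k}$ together with $\hat x\in S$ to make the extra term nonpositive, and invoke Lemma~\ref{lem:Bounds_Bregman_SPS}(i) for the $t_{k,\sigma}$ case. Your explicit observation that nonnegativity of $f_{i_k}$ and $f_{i_k}(\hat x)=0$ force $\hat x\in\argmin f_{i_k}$ (so that the star-convexity inequality is actually applicable) is a detail the paper leaves implicit, and is a welcome addition.
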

	
	\begin{proof}
		We recall that $t_k$ equals $t_{k,\varphi}$ from~\eqref{eqn:BregProj_stepsize}, since condition~\eqref{eqn:Condition_hyperplane_nonempty_intersection} is fulfilled by Theorem~\ref{thm:Definedness_nonnegative_starconvex}. 
		By Lemma~\ref{lem:sign_of_stepsize} we have that $t_k>0$. Since $f_{i_k}$ is star-convex and $\hat x\in S$, it holds that 
		\begin{align*}
			f_{i_k}(x_k) + \langle \nabla f_{i_k}(x_k),\hat x-x_k\rangle \leq 0.
		\end{align*}
		The statement now follows from Proposition~\ref{prop:Comparison_dist_to_arbitrary_points_Bregman_SPS}. The theorem applies in particular to $t=t_{k,\sigma}$ if $\varphi$ is $\sigma$-strongly convex, as Lemma~\ref{lem:Bounds_Bregman_SPS}(i) ensures that $0<t_{k,\sigma}\leq t_{k,\varphi}$.
	\end{proof}
	
	For mirror descent or stochastic mirror descent under interpolation, Theorem~\ref{thm:Comparison_dist_to_sol_Bregman_mSPS} tells that a choice of a smaller step size than $t_{k,\varphi}$ results in a larger distance to solutions~$\hat x$ of problem~\eqref{eqn:problem} in Bregman distance.

	The following lemma is the key element of our convergence analysis. 
	
	\begin{lem}
		\label{lem:Dphi_descent_estimate}
		Let $\big(x_k,x_k^*\big)$ be the iterates of either Algorithm~\ref{alg:NBK} or Algorithm~\ref{alg:NBK_relaxed}. Let Assumptions~\ref{as:Standard_assumptions}-\ref{as:Group_of_assumptions_on_f_nonnegative_and_star_convex_part} hold true and assume that $\varphi$ is $\sigma$-strongly convex w.r.t. a norm $\|\cdot\|$. 
		Then for every solution $\hat x\in S$ it holds that 
		\begin{align}
			\label{eqn:Dphi_descent_estimate_leq}
			D_\varphi^{x_{k+1}^*}(x_{k+1},\hat x) 
			\leq  D_\varphi^{x_k^*}(x_k,\hat x) - \frac{\sigma}{2} \frac{\big(f_{i_k}(x_k)\big)^2}{\|\nabla f_{i_k}(x_k)\|_*^2}. 
		\end{align}
	\end{lem}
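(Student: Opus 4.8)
The plan is to reduce the whole statement to a single auxiliary estimate for the relaxed update~\eqref{eqn:Update_with_variable_stepsize} evaluated at the relaxed step size $t_{k,\sigma}=\sigma f_{i_k}(x_k)/\|\nabla f_{i_k}(x_k)\|_*^2$, namely
\begin{align}
	\label{eqn:plan_aux_bound}
	D_\varphi^{x_{k+1}^*(t_{k,\sigma})}\big(x_{k+1}(t_{k,\sigma}),\hat x\big) \le D_\varphi^{x_k^*}(x_k,\hat x) - \frac{\sigma}{2}\,\frac{\big(f_{i_k}(x_k)\big)^2}{\|\nabla f_{i_k}(x_k)\|_*^2},
\end{align}
and then to argue that the left-hand side of~\eqref{eqn:Dphi_descent_estimate_leq} is dominated by the left-hand side of~\eqref{eqn:plan_aux_bound} for both algorithms. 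For Algorithm~\ref{alg:NBK_relaxed} this is immediate, since there $x_{k+1}=x_{k+1}(t_{k,\sigma})$ by construction. For Algorithm~\ref{alg:NBK}, Theorem~\ref{thm:Definedness_nonnegative_starconvex} guarantees that $x_{k+1}=x_{k+1}(t_{k,\varphi})$ is a genuine Bregman projection onto $H_k$, so Proposition~\ref{prop:Comparison_dist_to_arbitrary_points_Bregman_SPS} with $x=\hat x$ and $t=t_{k,\sigma}$ yields
\begin{align*}
	D_\varphi^{x_{k+1}^*}(x_{k+1},\hat x) &\le D_\varphi^{x_{k+1}^*(t_{k,\sigma})}\big(x_{k+1}(t_{k,\sigma}),\hat x\big) \\
	&\quad + (t_{k,\varphi}-t_{k,\sigma})\big(f_{i_k}(x_k)+\langle\nabla f_{i_k}(x_k),\hat x-x_k\rangle\big),
\end{align*}
and I would check that the correction term is nonpositive: under Assumption~\ref{as:Group_of_assumptions_on_f_nonnegative_and_star_convex_part}(i)--(ii), star-convexity of $f_{i_k}$ and $f_{i_k}(\hat x)=0$ give $f_{i_k}(x_k)+\langle\nabla f_{i_k}(x_k),\hat x-x_k\rangle\le 0$, while nonnegativity gives $f_{i_k}(x_k)>0$ and hence, by Lemma~\ref{lem:sign_of_stepsize} and Lemma~\ref{lem:Bounds_Bregman_SPS}(i), $t_{k,\varphi}\ge t_{k,\sigma}>0$; under Assumption~\ref{as:Group_of_assumptions_on_f_nonnegative_and_star_convex_part}(iii) the bracket equals $f_{i_k}(\hat x)=0$ exactly. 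So in all cases~\eqref{eqn:Dphi_descent_estimate_leq} follows from~\eqref{eqn:plan_aux_bound}.

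To prove~\eqref{eqn:plan_aux_bound} I would start from Lemma~\ref{lem:Basic_Dphi_descent_estimate} applied with $x=\hat x$, substitute $\beta_k=\langle\nabla f_{i_k}(x_k),x_k\rangle-f_{i_k}(x_k)$, and rewrite $D_\varphi^{x_k^*}(x_k,\hat x)$ via~\eqref{eqn:BregDist_with_conjugate}; all $t$-dependent first-order terms cancel and one obtains the identity
\begin{align*}
	D_\varphi^{x_{k+1}^*(t)}\big(x_{k+1}(t),\hat x\big) - D_\varphi^{x_k^*}(x_k,\hat x) &= \varphi^*\big(x_k^*-t\nabla f_{i_k}(x_k)\big) - \varphi^*(x_k^*) \\
	&\quad + t\langle\nabla f_{i_k}(x_k),\hat x\rangle.
\end{align*}
Since $\varphi$ is $\sigma$-strongly convex, $\varphi^*$ is $\tfrac1\sigma$-smooth w.r.t.\ $\|\cdot\|_*$ by Lemma~\ref{lem:BasicsConvexAnalysis_strongly_convex}(iii), and because $\nabla\varphi^*(x_k^*)=x_k$, the smooth descent inequality (Lemma~\ref{lem:BasicsConvexAnalysis_convex_Lsmooth}(ii)) bounds $\varphi^*(x_k^*-t\nabla f_{i_k}(x_k))-\varphi^*(x_k^*)$ by $-t\langle x_k,\nabla f_{i_k}(x_k)\rangle+\tfrac{t^2}{2\sigma}\|\nabla f_{i_k}(x_k)\|_*^2$. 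Thus the displayed difference is at most $t\langle\nabla f_{i_k}(x_k),\hat x-x_k\rangle+\tfrac{t^2}{2\sigma}\|\nabla f_{i_k}(x_k)\|_*^2$. Inserting $t=t_{k,\sigma}$ and using $\langle\nabla f_{i_k}(x_k),\hat x-x_k\rangle\le -f_{i_k}(x_k)$ --- which is an equality when $f_{i_k}$ is affine, and which may be multiplied by $t_{k,\sigma}>0$ in the nonnegative star-convex case since there $f_{i_k}(x_k)>0$ --- the quadratic collapses to exactly $-\tfrac{\sigma}{2}(f_{i_k}(x_k))^2/\|\nabla f_{i_k}(x_k)\|_*^2$, which is~\eqref{eqn:plan_aux_bound}. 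The degenerate case $f_{i_k}(x_k)=0$, where no step is taken, holds trivially with equality.

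The only genuinely delicate point is the sign bookkeeping in the affine case: there $f_{i_k}(x_k)$, and hence $t_{k,\sigma}$, may be negative, so one cannot simply minimize the quadratic over $t\ge 0$ and must instead use the exact identity $\langle\nabla f_{i_k}(x_k),\hat x-x_k\rangle=-f_{i_k}(x_k)$; in the nonnegative star-convex case only an inequality is available, but it survives multiplication by $t_{k,\sigma}$ precisely because $f_{i_k}(x_k)>0$ there. Everything else --- the conjugate-duality rewritings, the smoothness estimate, and the step-size comparison --- is routine given the lemmas already established.
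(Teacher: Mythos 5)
Your proof is correct and rests on the same ingredients as the paper's own argument: Lemma~\ref{lem:Basic_Dphi_descent_estimate} with $x=\hat x$, the $\tfrac1\sigma$-smoothness of $\varphi^*$ with the resulting quadratic minimized exactly at $t_{k,\sigma}$, and the star-convexity/sign bookkeeping (exact identity in the affine case) to discard the linear correction term. The only organizational difference is that for Algorithm~\ref{alg:NBK} you transfer from the relaxed step via Proposition~\ref{prop:Comparison_dist_to_arbitrary_points_Bregman_SPS} combined with the ordering $t_{k,\varphi}\geq t_{k,\sigma}>0$ from Lemma~\ref{lem:Bounds_Bregman_SPS}(i), whereas the paper applies the optimality of $t_{k,\varphi}$ for the line-search objective directly and therefore never needs Lemma~\ref{lem:Bounds_Bregman_SPS}; both routes ultimately exploit the same optimality property, so this is a cosmetic rather than substantive deviation.
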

	\begin{proof}
		We bound the right-hand side in Lemma~\ref{lem:Basic_Dphi_descent_estimate} from above for $t\in\{t_{k,\varphi},t_{k,\sigma}\}$. As $\varphi$ is $\sigma$-strongly convex, $\varphi^*$ is $\frac1\sigma$-smooth by Lemma~\ref{lem:BasicsConvexAnalysis_strongly_convex}(iii). Hence, by Lemma~\ref{lem:BasicsConvexAnalysis_convex_Lsmooth}(ii), for all $t\in\RR$ we can estimate that
		\begin{align*}
			\varphi^*(&x_k^*-t\nabla f_{i_k}(x_k)) + t\beta_k \\
			&=\varphi^*(x_k^*-t\nabla f_{i_k}(x_k)) + t\big(\langle \nabla f_{i_k}(x_k), x_k\rangle - f_{i_k}(x_k)\big) \\
			&\leq \varphi^*(x_k^*) - t\langle \nabla \varphi^*(x_k^*),\nabla f_{i_k}(x_k)\rangle + \frac{1}{2\sigma} t^2\|\nabla f_{i_k}(x_k)\|_*^2 \\
			& \hspace{0.5cm} + t\big(\langle \nabla f_{i_k}(x_k), x_k\rangle - f_{i_k}(x_k)\big) \\
			&= \varphi^*(x_k^*) - t f_{i_k}(x_k) + \frac{1}{2\sigma} t^2\|\nabla f_{i_k}(x_k)\|_*^2.
		\end{align*}
		Minimizing the right hand side over $t\in\RR$ gives $\hat t = \sigma \frac{ f_{i_k}(x_k)}{\|\nabla f_{i_k}(x_k)\|_*^2} = t_{k,\sigma}$ and 
		\[ \varphi^*(x_k^*) - \hat t f_{i_k}(x_k) + \frac{1}{2\sigma} \hat t^2\|\nabla f_{i_k}(x_k)\|_*^2 = \varphi^*(x_k^*) - \frac{\sigma}{2} \frac{\big(f_{i_k}(x_k)\big)^2}{\|\nabla f_{i_k}(x_k)\|_*^2}. \]
		By optimality of $t_{k,\varphi}$ we have that 
		\begin{align*}
			\varphi^*(&x_k^*-t_{k,\varphi}\nabla f_{i_k}(x_k)) + t_{k,\varphi}\beta_k 
			\leq \varphi^*(x_k^*-t\nabla f_{i_k}(x_k)) + t\beta_k
		\end{align*}
		for all $t\in\RR$. Hence, we have shown that  
		\begin{align} 
			\label{eqn:g_estimate}
			\varphi^*(x_k^*-t\nabla f_{i_k}(x_k)) + t\beta_k \leq \varphi^*(x_k^*) - \frac{\sigma}{2} \frac{\big(f_{i_k}(x_k)\big)^2}{\|\nabla f_{i_k}(x_k)\|_*^2} \end{align}
		holds for $t\in\{t_{k,\sigma},t_{k,\varphi}\}$. If Assumption~\ref{as:Group_of_assumptions_on_f_nonnegative_and_star_convex_part}(i) or~\ref{as:Group_of_assumptions_on_f_nonnegative_and_star_convex_part}(ii) are fulfilled, Lemma~\ref{lem:sign_of_stepsize} and star-convexity of $f_{i_k}$ show that
		\begin{align} 
			\label{eqn:Estimate_leq_convexity_estimate}
			t_k\big(f_{i_k}(x_k)+\langle \nabla f_{i_k}(x_k), \hat x-x_k\rangle\big) 
			\leq 0. 
		\end{align}
		Under Assumption~\ref{as:Group_of_assumptions_on_f_nonnegative_and_star_convex_part}(iii), we have equality in~\eqref{eqn:Estimate_leq_convexity_estimate}. 
		Inserting this inequality into Lemma~\ref{lem:Basic_Dphi_descent_estimate}, we obtain the claimed bound 
		\begin{align*}
			D_\varphi^{x_{k+1}^*}(x_{k+1},\hat x)
			&\leq \varphi^*(x_k^*) - \frac{\sigma}{2} \frac{\big(f_{i_k}(x_k)\big)^2}{\|\nabla f_{i_k}(x_k)\|_*^2} - \langle x_k^*,\hat x\rangle + \varphi(\hat x) \\
			&= D_\varphi^{x_k^*}(x_k,\hat x) - \frac{\sigma}{2} \frac{\big(f_{i_k}(x_k)\big)^2}{\|\nabla f_{i_k}(x_k)\|_*^2},
		\end{align*}
		where we used~\eqref{eqn:BregDist_with_conjugate} in the last step. \\
	\end{proof}

	We can now establish almost sure (a.s.) convergence of Algorithm~\ref{alg:NBK}. The expectations are always taken with respect to the random choice of the indices. Sometimes we also take conditional expectations conditioned on choices of indices in previous iterations, which we will indicate explicitly.

	\begin{thm} 
		\label{thm:Convergence_nonnegative_convex_C1_leq}
		Let Assumptions~\ref{as:Standard_assumptions}-\ref{as:Group_of_assumptions_on_f_nonnegative_and_star_convex_part} hold true and assume that $\varphi$ is $\sigma$-strongly convex w.r.t. a norm $\|\cdot\|$. Then it holds that
		\[ \EE\big[\sum_{i=1}^n p_i \big(f_i(x_k)\big)^2 \big]\to 0 \quad \text{as } k\to\infty \]
		and we have the rate 
		\begin{align*} \EE\big[\min_{l=1,...,k} \sum_{i=1}^n p_i \big(f_i(x_l)\big)^2\big] 
			\leq \frac{c}{\sigma k} \end{align*}
		with some constant $c>0$. Moreover, the iterates $x_k$ of Algorithm~\ref{alg:NBK} converge a.s. to a random variable whose image is contained in the solution set~$S$. 
	\end{thm}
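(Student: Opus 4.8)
The plan is to let the one-step descent inequality~\eqref{eqn:Dphi_descent_estimate_leq} of Lemma~\ref{lem:Dphi_descent_estimate} do essentially all the work. First I would fix a solution $\hat x\in S$ with $D_\varphi^{x_0^*}(x_0,\hat x)<\infty$ and abbreviate $V_k:=D_\varphi^{x_k^*}(x_k,\hat x)$. Since the term subtracted on the right of~\eqref{eqn:Dphi_descent_estimate_leq} is nonnegative, that lemma gives $V_{k+1}\le V_k$ \emph{pathwise} for every $k$, so $(V_k)_k$ is nonincreasing — hence convergent — on every sample path, with $V_k\le V_0$. Strong convexity of $\varphi$ then yields $\tfrac{\sigma}{2}\|x_k-\hat x\|^2\le V_k\le V_0$, so every iterate lies in $B:=\{x\in C:\|x-\hat x\|\le(2V_0/\sigma)^{1/2}\}$, a closed bounded (hence compact) subset of $D$. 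Consequently $G:=\max_{i=1,\dots,n}\sup_{x\in B}\|\nabla f_i(x)\|_*$ is finite by continuity of $\nabla f$, and whenever $f_i(x_k)\neq0$ one has $0<\|\nabla f_i(x_k)\|_*\le G$; the case $\nabla f_i(x_k)=0$ is harmless, since it forces $f_i(x_k)=0$ (using $f_i(\hat x)=0$ together with star-convexity, resp.\ affinity, of $f_i$), so no genuine step occurs there (with the convention $0/0:=0$).

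Next I would pass to conditional expectations. Writing $\FF_k:=\sigma(i_0,\dots,i_{k-1})$ and taking $\EE[\,\cdot\mid\FF_k\,]$ in~\eqref{eqn:Dphi_descent_estimate_leq}, then using $\|\nabla f_i(x_k)\|_*\le G$ on the active terms,
\begin{align*}
\EE\big[V_{k+1}\,\big|\,\FF_k\big]\ \le\ V_k-\frac{\sigma}{2}\sum_{i=1}^n p_i\,\frac{\big(f_i(x_k)\big)^2}{\|\nabla f_i(x_k)\|_*^2}\ \le\ V_k-\frac{\sigma}{2G^2}\sum_{i=1}^n p_i\big(f_i(x_k)\big)^2 .
\end{align*}
Taking full expectations and summing this telescoping inequality over $k$ gives $\sum_{k\ge0}\EE\big[\sum_i p_i(f_i(x_k))^2\big]\le\frac{2G^2}{\sigma}V_0<\infty$, which forces $\EE[\sum_i p_i(f_i(x_k))^2]\to0$. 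For the rate, $\EE[\min_{l=1,\dots,k}\sum_i p_i(f_i(x_l))^2]\le\frac1k\sum_{l=1}^k\EE[\sum_i p_i(f_i(x_l))^2]\le\frac{c}{\sigma k}$ with $c=2G^2 D_\varphi^{x_0^*}(x_0,\hat x)$ (and one may take the infimum over admissible $\hat x$).

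For the almost sure convergence of the iterates, I would first note that Tonelli's theorem upgrades $\sum_k\EE[\sum_i p_i(f_i(x_k))^2]<\infty$ to $\sum_k\sum_i p_i(f_i(x_k))^2<\infty$ a.s., hence $f_i(x_k)\to0$ a.s.\ for every $i$ (as $p_i>0$). Then I would work on a full-measure event on which, simultaneously: $f_i(x_k)\to0$ for all $i$; the trajectory stays in $B$; and — by the pathwise monotonicity above, valid by Lemma~\ref{lem:Dphi_descent_estimate} for \emph{every} solution $\hat y\in S$ — the sequence $D_\varphi^{x_k^*}(x_k,\hat y)$ converges for every $\hat y\in S$. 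Compactness of $B$ yields a subsequence $x_{k_j}\to x_\infty$; continuity of $f$ forces $f(x_\infty)=0$, and $x_\infty\in\overline{\dom\partial\varphi}=C$ by Assumption~\ref{as:Standard_assumptions}(iii), so $x_\infty\in S$. Evaluating the convergence of $D_\varphi^{x_k^*}(x_k,\cdot)$ at $\hat y=x_\infty$ and applying Assumption~\ref{as:Standard_assumptions}(iv) along $x_{k_j}\to x_\infty$ pins that limit to $0$; then $\tfrac{\sigma}{2}\|x_k-x_\infty\|^2\le D_\varphi^{x_k^*}(x_k,x_\infty)\to0$, i.e.\ $x_k\to x_\infty$ a.s., with $x_\infty$ measurable (an a.s.\ limit of the $x_k$) and valued in $S$.

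The hard part will be this last step — promoting a subsequential cluster point to a genuine limit. It goes through only because the monotonicity of $k\mapsto D_\varphi^{x_k^*}(x_k,\hat y)$ holds \emph{uniformly in $\hat y\in S$}, so it may legitimately be evaluated at the random point $x_\infty$, and because Assumption~\ref{as:Standard_assumptions}(iv) then drags the already-convergent distance $D_\varphi^{x_k^*}(x_k,x_\infty)$ down to zero along the convergent subsequence. The remaining ingredients — extracting the deterministic gradient bound $G$ from the fact that the trajectory is confined to a compact subset of $D$, and the bookkeeping of the degenerate steps where $f_i(x_k)=0$ or $\nabla f_i(x_k)=0$ — are routine.
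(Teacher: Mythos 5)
Your proposal is correct and follows essentially the same route as the paper: the one-step descent of Lemma~\ref{lem:Dphi_descent_estimate} plus $\sigma$-strong convexity to confine the iterates to a compact set and bound the gradients, conditional expectation and telescoping for the rate, and then a subsequence/Opial-type argument (compactness, continuity of $f$, Assumption~\ref{as:Standard_assumptions}(iv), and pathwise monotonicity of $D_\varphi^{x_k^*}(x_k,\hat y)$ for every $\hat y\in S$) to upgrade a cluster point to the a.s.\ limit. The only cosmetic difference is that you derive $f(x_k)\to 0$ a.s.\ from summability of the expected residuals via Tonelli, where the paper invokes the Robbins--Siegmund lemma; both are valid, and your bookkeeping of the degenerate steps with $\nabla f_i(x_k)=0$ is a harmless extra precaution.
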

	
	\begin{proof}
		By $\sigma$-strong convexity of $\varphi$ and Lemma~\ref{lem:Dphi_descent_estimate}, we have that
		\[ \frac{\sigma}{2}\|x_k-\hat x\|^2 \leq D_\varphi^{x_k^*}(x_k,\hat x) \leq D_\varphi^{x_0^*}(x_0,\hat x) \]
		holds for all $k\in\NN$ and $\hat x\in S$. Hence, the sequence $x_k$ is bounded and we have 
		\begin{align*}
			\|\nabla f_{i_k}(x_k)\|_*^2 \leq M 
		\end{align*}
		with some constant $M>0$. Inserting this into \eqref{eqn:Dphi_descent_estimate_leq} gives that for all $l\in\NN$ it holds
		\begin{align*}
			D_\varphi^{x_{l+1}^*}(x_{l+1},\hat x) \big) 
			&\leq D_\varphi^{x_l^*}(x_l,\hat x) - \frac{\sigma}{2M} \big(f_{i_l}(x_l)\big)^2.
		\end{align*}
		Taking conditional expectation w.r.t. $i_0,...,i_{l-1}$, we obtain
		\begin{align}
			\EE\big[D_\varphi^{x_{l+1}^*}(x_{l+1},\hat x)  \mid i_0,...,i_{l-1}\big]  
			&\leq D_\varphi^{x_l^*}(x_l,\hat x)- \frac{\sigma}{2M} \sum_{i=1}^n p_i\big(f_{i_l}(x_l)\big)^2.
			\label{eqn:Descent_estimate_in_thm_proof_leq} 
		\end{align}
		By rearranging and using the tower property of conditional expectation, we conclude that
		\begin{align*}
			\EE\big[ \sum_{i=1}^n p_i\big(f_{i_l}(x_l)\big)^2 \big] \leq
			\frac{2M}{\sigma} 	\big( \EE\big[ D_\varphi^{x_l^*}(x_l,\hat x) \big] - \EE\big[ D_\varphi^{x_{l+1}^*}(x_{l+1},\hat x) \big] \big)
		\end{align*}
		The convergence rate now follows with $c=2\cdot M\cdot D_{\varphi}^{x_0^*}(x_0,\hat x)$ for any $\hat x\in S$ by averaging over $l=0,...,k$ and telescoping. 
		
		Next, we prove the a.s. iterate convergence. Using \eqref{eqn:Descent_estimate_in_thm_proof_leq} gives that
		\begin{align*}
			\EE\big[D_\varphi^{x_{k+1}^*}(x_{k+1},\hat x) \mid i_0,...,i_{k-1}\big]  
			\leq 
			D_\varphi^{x_k^*}(x_{k},\hat x) - \frac{\sigma \cdot \min_i p_i}{2M} \cdot \|f(x_k)\|_2^2. 
		\end{align*}
		The Robbins-Siegmund Lemma~\cite{Robbins71} proves that $f(x_k)\to 0$ holds with probability~$1$. 
		Along any sample path in $\{f(x_k)\to 0\}$, due to boundedness of the sequence $x_k$, there exists a subsequence $x_{k_l}$ converging to some point $x$. By continuity, we have that $f(x)=0$ and hence, $x\in S$. Due to Assumption~\ref{as:Standard_assumptions}(iv), it holds that $D_\varphi^{x_{k_l}^*}(x_{k_l}, x)\to 0$ and since $D_\varphi^{x_k^*}(x_k,\hat x)$ is a decreasing sequence in $k$ for $\hat x\in S$ by Lemma \ref{lem:Dphi_descent_estimate}, we conclude that $D_\varphi^{x_{k}^*}(x_{k}, x)\to 0.$ Finally, strong convexity of $\varphi$ implies that $x_k\to x$.
	\end{proof}
	If the functions $f_i$ have Lipschitz continuous gradient, we can derive a sublinear rate for the $\ell_1$-kind loss $\EE\big[\min_{l=1,...,k} \sum_{i=1}^n p_i f_i(x_l)\big]$, which coincides with the rate in~\cite[Theorem 4]{LLMO21}. Note that without this assumption, Jensen's inequality gives the asymptotically slower rate
	\begin{align*} 
		\EE\big[\min_{l=1,...,k} \sum_{i=1}^n p_i f_i(x_l)\big] \leq \EE\big[\frac{1}{k} \sum_{l=1}^k \sum_{i=1}^n p_i f_i(x_l)\big]
		\leq
		\frac{c}{\sqrt{k}}
	\end{align*}
	for some constant $c>0$. We will need the following lemma.
	
	\begin{lem}{\cite[Lemma 3]{LVHL21}}
		\label{lem:LowerBound_stepsize_Lismooth}
		Let $\varphi$ be $\sigma$-strongly convex w.r.t. $\|\cdot\|$. Moreover, let the functions $f_i$ be $L$-smooth w.r.t. $\|\cdot\|$. Then it holds that 
		\[ t_{k,\sigma} \geq \frac{\sigma}{2L}.\]
	\end{lem}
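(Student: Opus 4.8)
The plan is to reduce the claimed lower bound on $t_{k,\sigma}$ to the single inequality
\[ \|\nabla f_{i_k}(x_k)\|_*^2 \leq 2L\, f_{i_k}(x_k), \]
which is the standard consequence of $L$-smoothness together with the fact that $f_{i_k}$ is bounded below by $0$. Indeed, in the relevant case we have $f_{i_k}(x_k)\neq 0$ and $\nabla f_{i_k}(x_k)\neq 0$, so since $f_{i_k}\geq 0$ we actually have $f_{i_k}(x_k)>0$ and
\[ t_{k,\sigma} = \sigma\,\frac{f_{i_k}(x_k)}{\|\nabla f_{i_k}(x_k)\|_*^2} \]
is well defined and positive. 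Dividing the displayed inequality by $\|\nabla f_{i_k}(x_k)\|_*^2>0$ and multiplying by $\sigma$ then gives precisely $t_{k,\sigma}\geq \tfrac{\sigma}{2L}$, so it suffices to establish that inequality.

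To prove $\|\nabla f_{i_k}(x_k)\|_*^2 \leq 2L f_{i_k}(x_k)$, I would pick a unit vector $u\in\RR^d$ attaining the dual norm, i.e. $\|u\|=1$ and $\langle \nabla f_{i_k}(x_k), u\rangle = \|\nabla f_{i_k}(x_k)\|_*$ (a maximizer exists in the definition of $\|\cdot\|_*$ in finite dimensions). Applying the descent inequality, that is, Lemma~\ref{lem:BasicsConvexAnalysis_convex_Lsmooth}(ii) for the $L$-smooth function $f_{i_k}$, at the points $x_k$ and $y = x_k - s u$ for $s>0$ yields
\[ 0 \leq f_{i_k}(x_k - s u) \leq f_{i_k}(x_k) - s\,\|\nabla f_{i_k}(x_k)\|_* + \tfrac{L}{2}s^2, \]
where the left inequality is nonnegativity of $f_{i_k}$. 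Minimizing the right-hand side over $s$ with the choice $s = \|\nabla f_{i_k}(x_k)\|_*/L$ gives $0 \leq f_{i_k}(x_k) - \tfrac{1}{2L}\|\nabla f_{i_k}(x_k)\|_*^2$, which is the desired bound. Combining with the reduction above completes the argument.

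The main obstacle is really only a matter of care rather than of depth: for a general (non-Euclidean) norm one must move along the direction $u$ that attains the dual norm rather than simply along $-\nabla f_{i_k}(x_k)$, and one should record that the degenerate cases $f_{i_k}(x_k)=0$ or $\nabla f_{i_k}(x_k)=0$ are excluded by the running hypothesis of the iteration (and in any case are consistent with the bound, since $\|\nabla f_{i_k}(x_k)\|_*^2\leq 2Lf_{i_k}(x_k)$ forces both to vanish simultaneously). Note also that only $L$-smoothness, i.e. Lipschitz continuity of the gradient, is used here; convexity (or star-convexity) of $f_{i_k}$ plays no role, so Lemma~\ref{lem:BasicsConvexAnalysis_convex_Lsmooth}(ii) is invoked purely for its descent-inequality content.
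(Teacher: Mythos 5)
Your argument is correct: the paper gives no proof of this lemma at all (it is quoted directly from \cite[Lemma 3]{LVHL21}), and your reduction to the self-bounding inequality $\|\nabla f_{i_k}(x_k)\|_*^2 \le 2L\, f_{i_k}(x_k)$ --- obtained by applying the descent inequality of Lemma~\ref{lem:BasicsConvexAnalysis_convex_Lsmooth}(ii) along a unit vector attaining the dual norm and then invoking nonnegativity of $f_{i_k}$ --- is exactly the standard argument behind that citation, including the correct handling of the non-Euclidean norm. The only caveat worth recording is that the auxiliary point $x_k - su$ must lie in the domain of $f_{i_k}$, which is automatic in the setting where the descent inequality is stated on all of $\RR^d$.
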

	

	\begin{thm}
		\label{thm:Convergence_nonnegative_convex_Li_smooth}
		Let Assumptions~\ref{as:Standard_assumptions}-\ref{as:Group_of_assumptions_on_f_nonnegative_and_star_convex_part} hold true and assume that $\varphi$ is $\sigma$-strongly convex w.r.t. a norm $\|\cdot\|$ and all functions $f_i$ are $L$-smooth w.r.t. $\|\cdot\|$. Then the iterates $x_k$ of Algorithm~\ref{alg:NBK} fulfill that
		\begin{align*} 
			\EE\big[\min_{l=1,...,k} \sum_{i=1}^n p_i f_i(x_l)\big] \leq 
			\frac{4L}{\sigma k} \cdot \inf_{\hat x\in S} D^{x_0^*}_\varphi(x_0,\hat x).
		\end{align*}            
	\end{thm}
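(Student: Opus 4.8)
The plan is to reuse the one-step descent estimate from Lemma~\ref{lem:Dphi_descent_estimate} and convert its \emph{quadratic} decrease term into one that is \emph{linear} in $f_{i_k}(x_k)$; this conversion is exactly where the $L$-smoothness of the $f_i$ enters, via Lemma~\ref{lem:LowerBound_stepsize_Lismooth}. Concretely, I would start from the bound
\[ D_\varphi^{x_{l+1}^*}(x_{l+1},\hat x) \leq D_\varphi^{x_l^*}(x_l,\hat x) - \frac{\sigma}{2}\,\frac{\big(f_{i_l}(x_l)\big)^2}{\|\nabla f_{i_l}(x_l)\|_*^2}, \]
valid for every $\hat x\in S$ and every $l$. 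Since $t_{l,\sigma} = \sigma\,f_{i_l}(x_l)/\|\nabla f_{i_l}(x_l)\|_*^2$, the decrease term equals $\tfrac12\,t_{l,\sigma}\,f_{i_l}(x_l)$, and Lemma~\ref{lem:LowerBound_stepsize_Lismooth} gives $t_{l,\sigma}\geq\sigma/(2L)$ whenever $f_{i_l}(x_l)>0$ (equivalently, the bound $f_i(x)\le\tfrac{1}{2L}\|\nabla f_i(x)\|_*^2$ for nonnegative $L$-smooth $f_i$), so that $\tfrac12\,t_{l,\sigma}\,f_{i_l}(x_l)\geq\tfrac{\sigma}{4L}\,f_{i_l}(x_l)$; when $f_{i_l}(x_l)\le 0$ this inequality is trivial because the left-hand side is nonnegative. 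Hence
\[ D_\varphi^{x_{l+1}^*}(x_{l+1},\hat x) \leq D_\varphi^{x_l^*}(x_l,\hat x) - \frac{\sigma}{4L}\,f_{i_l}(x_l). \]

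Next I would take the conditional expectation with respect to $i_0,\dots,i_{l-1}$, which replaces $f_{i_l}(x_l)$ by $\sum_{i=1}^n p_i f_i(x_l)$, then take the full expectation, sum over $l$, telescope the Bregman-distance terms, and drop the nonnegative trailing one; dividing by $k$ gives
\[ \frac{\sigma}{4L}\cdot\frac{1}{k}\sum_{l=1}^k\EE\Big[\sum_{i=1}^n p_i f_i(x_l)\Big] \leq \frac{1}{k}\,D_\varphi^{x_0^*}(x_0,\hat x), \]
the index bookkeeping over $l=1,\dots,k$ being handled exactly as in the proof of Theorem~\ref{thm:Convergence_nonnegative_convex_C1_leq}. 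Finally, since a minimum is at most an average, $\min_{l=1,\dots,k}\sum_{i=1}^n p_i f_i(x_l)\le\tfrac{1}{k}\sum_{l=1}^k\sum_{i=1}^n p_i f_i(x_l)$; taking expectations and then the infimum over $\hat x\in S$ on the right-hand side (the left-hand side being independent of $\hat x$) yields the claimed estimate.

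I do not expect a genuine obstacle: the only non-routine step is passing from the quadratic decrease $\tfrac{\sigma}{2}(f_{i_l}(x_l))^2/\|\nabla f_{i_l}(x_l)\|_*^2$ to the linear decrease $\tfrac{\sigma}{4L}\,f_{i_l}(x_l)$, and this is supplied verbatim by Lemma~\ref{lem:LowerBound_stepsize_Lismooth}. Everything else --- conditional expectations with the tower property, telescoping, and the ``min $\le$ average'' application of Jensen's inequality --- mirrors the corresponding argument in the proof of Theorem~\ref{thm:Convergence_nonnegative_convex_C1_leq}, so the write-up should be short.
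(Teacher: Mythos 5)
Your proposal is correct and follows essentially the same route as the paper: combine Lemma~\ref{lem:Dphi_descent_estimate} with Lemma~\ref{lem:LowerBound_stepsize_Lismooth} to turn the quadratic decrease into the linear decrease $\tfrac{\sigma}{4L}f_{i_l}(x_l)$, then take expectations, telescope, and bound the minimum by the average as in Theorem~\ref{thm:Convergence_nonnegative_convex_C1_leq}. Your extra remark handling the case $f_{i_l}(x_l)\le 0$ is a small but welcome refinement over the paper's terser write-up.
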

	
	\begin{proof}
		Combining Lemma~\ref{lem:Dphi_descent_estimate} and Lemma~\ref{lem:LowerBound_stepsize_Lismooth} yields that
		\[ D_\varphi^{x_{k+1}^*}(x_{k+1},\hat x) \leq D_\varphi^{x_k^*}(x_k,\hat x) - \frac{1}{2} f_{i_k}(x_k) \cdot t_{k,\sigma} \leq  D_\varphi^{x_k^*}(x_k,\hat x) - \frac{\sigma}{4L} f_{i_k}(x_k). \]
		The assertion now follows as in the proof of Theorem~\ref{thm:Convergence_nonnegative_convex_C1_leq} by taking expectation and telescoping.
	\end{proof} 
	
	For strongly star-convex functions $f_i$, we can prove a linear convergence rate, where we recover the contraction factor from~\cite[Theorem 3]{LLMO21}. Moreover, we can even improve this factor for smooth~$\varphi$.
	
	\begin{thm}
		\label{thm:Convergence_nonnegative_convex_Li_smooth_mu_sc}
		Let Assumptions~\ref{as:Standard_assumptions}-\ref{as:Group_of_assumptions_on_f_nonnegative_and_star_convex_part} hold true and assume that $\varphi$ is $\sigma$-strongly convex w.r.t. a norm $\|\cdot\|$ and all functions $f_i$ are $L$-smooth w.r.t. $\|\cdot\|$. Moreover, assume that $\overline f:=\sum_{i=1}^d p_if_i$ is $\mu$-strongly star-convex w.r.t. $S$ relative to~$\varphi$. Then there exists an element $\hat x \in S$ such that the iterates $x_k$ of Algorithm~\ref{alg:NBK} converge to $\hat x$ at the rate
		\[ \EE\big[D_\varphi^{x_{k+1}^*}(x_{k+1},\hat x)\big] \leq \big(1-\frac{\mu\sigma}{2L}\big) \EE\big[ D_\varphi^{x_k^*}(x_k,\hat x) \big] - \frac{\sigma}{4L}\overline f(x_k). \]
		Moreover, if $\varphi$ is $M$-smooth, it holds that
		\[ \EE\big[D_\varphi^{x_{k+1}^*}(x_{k+1},\hat x)\big] \leq  \big(1-\frac{\mu\sigma}{2L}-\frac{\mu\sigma^2}{4LM}\big)
		\EE\big[D_\varphi^{x_k^*}(x_k,\hat x)\big]. \]
	\end{thm}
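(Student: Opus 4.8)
The plan is to establish a one-step linear recursion for $\EE\big[D_\varphi^{x_k^*}(x_k,\hat x)\big]$ and then iterate. Fix $\hat x\in S$; by nonnegativity of the $f_i$ and interpolation one has $S=\argmin_{x\in C}\overline f(x)$ and $\overline f(\hat x)=0$, and the quadratic growth bound established below will also show that $\hat x$ is unique, so the limit point is well defined.

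For the recursion, take $x=\hat x$ and $t=t_k$ in Lemma~\ref{lem:Basic_Dphi_descent_estimate}. By Theorem~\ref{thm:Definedness_nonnegative_starconvex} the step actually used is $t_k=t_{k,\varphi}$, and by Lemma~\ref{lem:Bounds_Bregman_SPS}(i) and Lemma~\ref{lem:LowerBound_stepsize_Lismooth} it satisfies $t_k\geq t_{k,\sigma}\geq\tfrac{\sigma}{2L}>0$ whenever $f_{i_k}(x_k)>0$ (if $f_{i_k}(x_k)=0$ there is nothing to do, since then $\nabla f_{i_k}(x_k)=0$ as well, so both sides agree). I bound the conjugate part $\varphi^*(x_k^*-t_k\nabla f_{i_k}(x_k))+t_k\beta_k$ by the estimate~\eqref{eqn:g_estimate} from the proof of Lemma~\ref{lem:Dphi_descent_estimate}, and use that by star-convexity the affine residual $f_{i_k}(x_k)+\langle\nabla f_{i_k}(x_k),\hat x-x_k\rangle$ is $\leq 0$, so that multiplying it by $t_k\geq\tfrac{\sigma}{2L}$ only makes it smaller. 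With~\eqref{eqn:BregDist_with_conjugate} to recognise $\varphi^*(x_k^*)-\langle x_k^*,\hat x\rangle+\varphi(\hat x)=D_\varphi^{x_k^*}(x_k,\hat x)$, this yields
\[
D_\varphi^{x_{k+1}^*}(x_{k+1},\hat x)\leq D_\varphi^{x_k^*}(x_k,\hat x)-\tfrac{\sigma}{2}\tfrac{(f_{i_k}(x_k))^2}{\|\nabla f_{i_k}(x_k)\|_*^2}+\tfrac{\sigma}{2L}\bigl(f_{i_k}(x_k)+\langle\nabla f_{i_k}(x_k),\hat x-x_k\rangle\bigr).
\]
Taking the expectation conditioned on the past, the bracket becomes $\overline f(x_k)+\langle\nabla\overline f(x_k),\hat x-x_k\rangle\leq-\mu D_\varphi^{x_k^*}(x_k,\hat x)$ by $\mu$-strong star-convexity of $\overline f$ relative to $\varphi$, and $L$-smoothness with nonnegativity gives $\|\nabla f_i(x)\|_*^2\leq 2Lf_i(x)$, hence $-\tfrac{\sigma}{2}\tfrac{(f_{i_k}(x_k))^2}{\|\nabla f_{i_k}(x_k)\|_*^2}\leq-\tfrac{\sigma}{4L}f_{i_k}(x_k)$, i.e. $-\tfrac{\sigma}{4L}\overline f(x_k)$ in expectation. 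Taking full expectations gives the first claim.

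For the refinement when $\varphi$ is $M$-smooth, I first prove the quadratic growth bound $\overline f(x)\geq\tfrac{\mu\sigma}{2}\|x-\hat x\|^2$ for $x\in\dom\varphi$: parametrising $\gamma(s)=\hat x+s(x-\hat x)$ and $h(s)=\overline f(\gamma(s))$, the relative strong star-convexity of $\overline f$ applied at $\gamma(s)$ (valid since $\gamma(s)\in\ri\dom\varphi$ for $s\in(0,1]$), together with $D_\varphi^{\gamma(s)^*}(\gamma(s),\hat x)\geq\tfrac{\sigma}{2}s^2\|x-\hat x\|^2$, gives $sh'(s)-h(s)\geq\tfrac{\mu\sigma}{2}s^2\|x-\hat x\|^2$, i.e. $\tfrac{d}{ds}\bigl(h(s)/s\bigr)\geq\tfrac{\mu\sigma}{2}\|x-\hat x\|^2$; integrating over $(0,1)$ with $h(0)=0$ and $\lim_{s\to0^+}h(s)/s=\langle\nabla\overline f(\hat x),x-\hat x\rangle\geq 0$ (first-order optimality of $\hat x$ over $C$) yields the claim. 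Since $M$-smoothness of $\varphi$ gives $D_\varphi^{x^*}(x,\hat x)\leq\tfrac{M}{2}\|x-\hat x\|^2$, this upgrades to $\overline f(x_k)\geq\tfrac{\mu\sigma}{M}D_\varphi^{x_k^*}(x_k,\hat x)$; substituting into the residual $-\tfrac{\sigma}{4L}\overline f(x_k)$ of the first bound absorbs it into the contraction factor and produces $\bigl(1-\tfrac{\mu\sigma}{2L}-\tfrac{\mu\sigma^2}{4LM}\bigr)$. The least routine ingredient — and the place I expect the real work to sit — is realising that one needs this quadratic growth bound and that it follows purely from the relative strong star-convexity inequality, since $\overline f$ itself is only assumed star-convex, not convex.

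Finally, almost-sure convergence to $\hat x$ follows as in the proof of Theorem~\ref{thm:Convergence_nonnegative_convex_C1_leq}: the recursion gives $\EE\big[D_\varphi^{x_k^*}(x_k,\hat x)\big]\to 0$ at a geometric rate, and since $D_\varphi^{x_k^*}(x_k,\hat x)$ is pathwise non-increasing (Lemma~\ref{lem:Dphi_descent_estimate}) and non-negative, monotone convergence forces $D_\varphi^{x_k^*}(x_k,\hat x)\to 0$ almost surely; $\sigma$-strong convexity of $\varphi$ then gives $x_k\to\hat x$ almost surely.
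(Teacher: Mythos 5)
Your proof is correct, and for the first rate it is essentially the paper's own argument: the same descent machinery (Lemma~\ref{lem:Basic_Dphi_descent_estimate} together with the bound~\eqref{eqn:g_estimate} from Lemma~\ref{lem:Dphi_descent_estimate}), the estimate $t_k\geq t_{k,\sigma}\geq\tfrac{\sigma}{2L}$ from Lemma~\ref{lem:Bounds_Bregman_SPS}(i) and Lemma~\ref{lem:LowerBound_stepsize_Lismooth} applied to the nonpositive linearization residual, and relative strong star-convexity after taking the conditional expectation. Where you genuinely diverge is the $M$-smooth refinement. The paper obtains $\overline f(x_k)\geq\tfrac{\mu\sigma}{M}D_\varphi^{x_k^*}(x_k,\hat x)$ in one line by writing $\overline f(x_k)=\overline f(x_k)-\overline f(\hat x)-\langle\nabla\overline f(\hat x),x_k-\hat x\rangle\geq\mu D_\varphi(\hat x,x_k)$, i.e.\ it invokes the strong star-convexity inequality with the linearization placed at the minimizer $\hat x$ and the evaluation at $x_k$ --- a direction that Definition~\ref{as:Nonneg_starconvex_nonpos_starconcave} (which linearizes at the arbitrary point and evaluates at the minimizer) does not literally supply. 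Your integration of $\tfrac{d}{ds}\bigl(h(s)/s\bigr)\geq\tfrac{\mu\sigma}{2}\|x-\hat x\|^2$ along the segment derives the needed quadratic growth $\overline f(x)\geq\tfrac{\mu\sigma}{2}\|x-\hat x\|^2$ honestly from the star-convexity definition, so your route costs more work but actually repairs a step the paper only asserts; both arguments then finish identically via $D_\varphi^{x_k^*}(x_k,\hat x)\leq\tfrac{M}{2}\|x_k-\hat x\|^2$. You also supply the almost-sure convergence of the iterates to $\hat x$ and its uniqueness, which the theorem statement claims but the paper's proof does not address, and you correctly flag that the $\overline f(x_k)$ on the right-hand side of the first rate should be read in expectation.
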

	
	\begin{proof}
		By Lemma~\ref{lem:LowerBound_stepsize_Lismooth} we have that
		\[ t_k\big(\langle \nabla f_{i_k}(x_k),\hat x-x_k\rangle +  f_{i_k}(x_k)\big) 
		\leq \frac{\sigma}{2L}\big(\langle \nabla f_{i_k}(x_k),\hat x-x_k\rangle +  f_{i_k}(x_k)\big).
		\]
		Taking expectation and using the assumption of relative strong convexity, we obtain that
		\begin{align*} \EE\big[ t_k\big(\langle \nabla f_{i_k}(x_k),\hat x-x_k\rangle +  f_{i_k}(x_k)\big) \big] 
			&\leq - \frac{\sigma}{2L} \EE\big[{\overline f}(\hat x)-{\overline f}(x_k)-\langle\nabla{\overline f}(x_k),\hat x-x_k\rangle\big] \\
			&\leq -\frac{\mu\sigma}{2L}\EE\big[D_\varphi^{x_k^*}(x_k,\hat x)\big].
		\end{align*}
		The first convergence rate then follows by the steps in Lemma~\ref{lem:Dphi_descent_estimate} and Theorem~\ref{thm:Convergence_nonnegative_convex_Li_smooth}, replacing~\eqref{eqn:Estimate_leq_convexity_estimate} by the above inequality. Finally, let $\varphi$ be additionally $M$-smooth. Using that $\nabla \overline f(\hat x)=0$, we can further bound
		\begin{align*} \overline f(x_k) &= \overline f(x_k) - \overline f(\hat x) - \langle \nabla \overline f(\hat x), x_k-\hat x\rangle \\
			&\geq \mu D_\varphi(\hat x,x_k) \geq \frac{\mu\sigma}{2}\|x_k-\hat x\|^2\geq \frac{\mu\sigma}{M}D_\varphi^{x_k^*}(x_k,\hat x). 
		\end{align*}
	\end{proof}
	
	Since the proofs of Theorem~\ref{thm:Convergence_nonnegative_convex_C1_leq}, Theorem~\ref{thm:Convergence_nonnegative_convex_Li_smooth} and Theorem~\ref{thm:Convergence_nonnegative_convex_Li_smooth_mu_sc} rely on Lemma~\ref{lem:Dphi_descent_estimate}, they also hold for Algorithm \ref{alg:NBK_relaxed}.

	\subsection{Convergence under the local tangential cone condition}
	\label{subsec:tcc}
	
	Inspired by~\cite{WLBG22}, we consider functions fulfilling the so-called tangential cone condition, which was introduced in~\cite{HNS95} as a sufficient condition
	for convergence of the Landweber iteration for solving ill–posed nonlinear problems.
	
	\begin{defi}
		A differentiable function $f\colon D\to\RR$ fulfills the \emph{local tangential cone condition} \textup{($\eta$-TCC)} on $U\subset D$ with constant $0<\eta<1$, if for all $x,y\in U$ it holds that
		\begin{align} \label{eqn:tangential_cone_condition} |f(x)+\langle \nabla f(x),y-x\rangle -f(y)| \leq \eta |f(x)-f(y)|. \end{align}
	\end{defi}
	
	Under this condition, we are able to formulate a variant of Lemma~\ref{lem:Dphi_descent_estimate} and derive corresponding convergence rates. Precisely, we will assume the following.
	
	\begin{as}
		\label{as:tangential_cone_condition}
		There exist a point $\hat x\in S$ and constants $\eta\in ]0,1[$ and $r>0$ such that each function $f_i$ fulfills $\eta$-TCC w.r.t. $\eta$ on 
		\begin{align*}
			B_{r,\varphi}(\hat x) := \big\{x\in C: D_\varphi^{x^*}(x,\hat x) \leq r \quad \text{for all } x^*\in\partial\varphi(x) \big\}.
		\end{align*}
	\end{as}

	\begin{lem}
		\label{lem:Dphi_descent_estimate_tcc}
		Let Assumption~\ref{as:Standard_assumptions} and Assumption~\ref{as:tangential_cone_condition} hold true and assume that $\varphi$ is $\sigma$-strongly convex w.r.t. a norm~$\|\cdot\|$. Let $\hat x\in S$. Then, the iterates of Algorithm~\ref{alg:NBK} fulfill
		\[ D^{x_{k+1}^*}_\varphi(x_{k+1},\hat x) \leq D^{x_k^*}_\varphi(x_k,\hat x) - \tau \frac{\big(f_{i_k}(x_k)\big)^2}{\|\nabla f_{i_k}(x_k)\|_*^2}, \] 
		if one of the following conditions is fulfilled:
		\begin{enumerate}[(i)]
			\item $t_k=t_{k,\sigma}$, $\eta<\frac12$ and $\tau = \sigma\big(\frac{1}{2}-\eta\big)$,
			\item $t_k=t_{k,\varphi}$, $\varphi$ is additionally $M$-smooth w.r.t. $\|\cdot\|$, $\eta < \frac{\sigma}{2M}$ and $\tau = \sigma\big(\frac{1}{2}-\eta \frac{M}{\sigma}\big).$
		\end{enumerate}
		In particular, if $x_0\in B_{r,\varphi}(\hat x)$, then in both cases we have that $x_k\in B_{r,\varphi}(\hat x)$ for all $k\in\NN$.
	\end{lem}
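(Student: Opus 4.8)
The plan is to mirror the proof of Lemma~\ref{lem:Dphi_descent_estimate}, building on the exact identity of Lemma~\ref{lem:Basic_Dphi_descent_estimate} and the $\tfrac1\sigma$-smoothness of $\varphi^*$, but replacing the star-convexity estimate~\eqref{eqn:Estimate_leq_convexity_estimate} by the tangential cone condition. One may assume $f_{i_k}(x_k)\neq 0$ and $\nabla f_{i_k}(x_k)\neq 0$, since otherwise $x_{k+1}=x_k$ and the claim is trivial, and one argues inductively, assuming $x_k\in B_{r,\varphi}(\hat x)$ so that $\eta$-TCC for $f_{i_k}$ is available at the pair $(x_k,\hat x)$ (note $\hat x\in B_{r,\varphi}(\hat x)$ trivially).

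First I would apply Lemma~\ref{lem:Basic_Dphi_descent_estimate} with $x=\hat x$ and $t=t_k$, rewrite $\varphi^*(x_k^*)-\langle x_k^*,\hat x\rangle+\varphi(\hat x)=D_\varphi^{x_k^*}(x_k,\hat x)$ via~\eqref{eqn:BregDist_with_conjugate}, and bound $\varphi^*(x_k^*-t_k\nabla f_{i_k}(x_k))+t_k\beta_k$ using~\eqref{eqn:g_estimate}. That bound holds for both admissible choices $t_k\in\{t_{k,\sigma},t_{k,\varphi}\}$, which is exactly the dichotomy between cases (i) and (ii): for $t_{k,\sigma}$ it is the minimization of the quadratic upper model coming from $\tfrac1\sigma$-smoothness of $\varphi^*$, and for $t_{k,\varphi}$ it then follows because $t_{k,\varphi}$ minimizes $g_{i_k,x_k^*}$. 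This yields
\[ D_\varphi^{x_{k+1}^*}(x_{k+1},\hat x)\leq D_\varphi^{x_k^*}(x_k,\hat x)-\frac{\sigma}{2}\frac{\big(f_{i_k}(x_k)\big)^2}{\|\nabla f_{i_k}(x_k)\|_*^2}+t_k\big(f_{i_k}(x_k)+\langle\nabla f_{i_k}(x_k),\hat x-x_k\rangle\big). \]

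The crux is controlling the last term. Since $\hat x\in S$ gives $f_{i_k}(\hat x)=0$, the $\eta$-TCC reads $|f_{i_k}(x_k)+\langle\nabla f_{i_k}(x_k),\hat x-x_k\rangle|\leq\eta\,|f_{i_k}(x_k)|$; together with $\mathrm{sign}(t_k)=\mathrm{sign}(f_{i_k}(x_k))$ from Lemma~\ref{lem:sign_of_stepsize} — which makes $t_kf_{i_k}(x_k)=|t_k|\,|f_{i_k}(x_k)|\geq 0$ — this gives $t_k\big(f_{i_k}(x_k)+\langle\nabla f_{i_k}(x_k),\hat x-x_k\rangle\big)\leq\eta\,t_kf_{i_k}(x_k)$. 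In case (i), $t_k=t_{k,\sigma}$ makes $t_kf_{i_k}(x_k)=\sigma(f_{i_k}(x_k))^2/\|\nabla f_{i_k}(x_k)\|_*^2$, so the cross term is at most $\eta\sigma(f_{i_k}(x_k))^2/\|\nabla f_{i_k}(x_k)\|_*^2$ and the estimate closes with $\tau=\sigma(\tfrac12-\eta)$. In case (ii), Lemma~\ref{lem:Bounds_Bregman_SPS}(ii) — this is where the extra $M$-smoothness of $\varphi$ is used — bounds $t_kf_{i_k}(x_k)=|t_{k,\varphi}|\,|f_{i_k}(x_k)|\leq M(f_{i_k}(x_k))^2/\|\nabla f_{i_k}(x_k)\|_*^2$, so the cross term is at most $\eta M(f_{i_k}(x_k))^2/\|\nabla f_{i_k}(x_k)\|_*^2$ and $\tau=\sigma(\tfrac12-\eta\tfrac{M}{\sigma})$. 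The stated inequalities on $\eta$ are precisely what makes $\tau>0$.

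For the final assertion one closes the induction: with $\tau>0$ the displayed inequality forces $D_\varphi^{x_{k+1}^*}(x_{k+1},\hat x)\leq D_\varphi^{x_k^*}(x_k,\hat x)\leq r$, hence $x_{k+1}\in B_{r,\varphi}(\hat x)$, so the TCC is again available at the next iteration. I expect the only delicate point to be the bookkeeping around $B_{r,\varphi}(\hat x)$: one must keep the induction hypothesis $x_k\in B_{r,\varphi}(\hat x)$ exactly strong enough to license the TCC at step $k$ and verify that the per-step decrease reestablishes membership at step $k+1$; the analytic content — the quadratic upper bound on $\varphi^*$ together with the step-size bounds of Lemma~\ref{lem:Bounds_Bregman_SPS} — is otherwise a short computation.
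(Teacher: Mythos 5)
Your proposal is correct and follows essentially the same route as the paper: the paper's proof likewise reduces to the argument of Lemma~\ref{lem:Dphi_descent_estimate} (i.e.\ Lemma~\ref{lem:Basic_Dphi_descent_estimate} plus the bound~\eqref{eqn:g_estimate} from $\tfrac1\sigma$-smoothness of $\varphi^*$) and replaces~\eqref{eqn:Estimate_leq_convexity_estimate} by the $\eta$-TCC bound on the cross term, using the sign of $t_k$ in case (i) and Lemma~\ref{lem:Bounds_Bregman_SPS}(ii) in case (ii). Your explicit induction for the claim $x_k\in B_{r,\varphi}(\hat x)$ is only sketched in the paper, and your treatment of it is a fair account of the bookkeeping involved.
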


	\begin{proof}
		
		For (i), by definition of $t_{k,\sigma}$ and $\eta$-TCC we have that
		\begin{align*}
			t_{k,\sigma} \big( f_{i_k}(x_k) + \langle f_{i_k}(x_k),\hat x-x_k\rangle \big) \leq \eta \sigma \frac{\big(f_{i_k}(x_k)\big)^2}{\|\nabla f_{i_k}(x_k)\|_*^2}.
		\end{align*}
		The first convergence rate then follows by the steps in Lemma~\ref{lem:Dphi_descent_estimate}, replacing~\eqref{eqn:Estimate_leq_convexity_estimate} by the above inequality. 
		For (ii), using Lemma~\ref{lem:Bounds_Bregman_SPS}(ii) and the fact that $f_{i_k}$ fulfills $\eta$-TCC, we estimate
		\begin{align*}
			t_{k,\varphi}\big(f_{i_k}(x_k)+\langle \nabla f_{i_k}(x_k), \hat x-x_k\rangle\big) 
			\leq \eta M\frac{f_{i_k}(x)^2}{\|\nabla f_{i_k}(x)\|_*^2},
		\end{align*}
		so that the assertion follows as in (i). 
	\end{proof}
	
	The condition on $\eta$ in (i) is the classical condition for Landweber methods (see e.g.~\cite[Theorem 3.8]{HNS95}) and is also required in the work~\cite{WLBG22}, which studies Algorithm~\ref{alg:NBK} for $\varphi(x)=\frac12\|x\|_2^2$ under $\eta$-TCC. The constant $\tau$ in (ii) can not be greater than $\tau$ in (i), as it holds that $\sigma\leq M$.

	\begin{thm} 
		\label{thm:Convergence_tcc_sublinear_rate}
		Let Assumption~\ref{as:Standard_assumptions} hold and let $\varphi$ be $\sigma$-strongly convex. Moreover, let Assumption~\ref{as:tangential_cone_condition} hold with some $\eta>0$ and $\hat x\in S$ and let $x_0\in B_{r,\varphi}(\hat x)$.
		\begin{enumerate}[(i)]
			\item If $\eta<\frac12$, then the iterates $x_k$ of Algorithm~\ref{alg:NBK_relaxed} converge a.s. to a random variable whose image is contained in the solution set $S\cap B_{r,\varphi}(\hat x)$ and it holds that
			\begin{align*} 
				\EE\big[\min_{l=1,...,k} \sum_{i=1}^n p_i \big(f_i(x_l)\big)^2\big] 
				\leq 
				\frac{C \cdot D_\varphi^{x_0^*}(x_0,\hat x)}{\sigma\big(\frac12-\eta\big) k}.
			\end{align*}
			\item Let $\varphi$ be additionally $M$-smooth and $\eta < \frac{\sigma}{2M}$. Assume that $x_k$ are the iterates of Algorithm~\ref{alg:NBK} and the condition $H_k\cap\dom\varphi\neq\emptyset$ is fulfilled for all $k$. Then the $x_k$ converge a.s. to a random variable whose image is contained in the solution set $S\cap B_{r,\varphi}(\hat x)$ and it holds that 
			\begin{align*} 
				\EE\big[\min_{l=1,...,k} \sum_{i=1}^n p_i \big(f_i(x_l)\big)^2\big] 
				\leq 
				\frac{C \cdot D_\varphi^{x_0^*}(x_0,\hat x)}{\sigma \big(\frac{1}{2}-\eta \frac{M}{\sigma}\big) k}. 
			\end{align*}
		\end{enumerate}
	\end{thm}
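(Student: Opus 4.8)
The plan is to mirror the proof of Theorem~\ref{thm:Convergence_nonnegative_convex_C1_leq}, replacing Lemma~\ref{lem:Dphi_descent_estimate} by its tangential-cone analogue Lemma~\ref{lem:Dphi_descent_estimate_tcc}. Since $x_0\in B_{r,\varphi}(\hat x)$, that lemma already guarantees that every iterate stays in $B_{r,\varphi}(\hat x)$ and that a one-step decrease
\[ D_\varphi^{x_{k+1}^*}(x_{k+1},\hat x)\leq D_\varphi^{x_k^*}(x_k,\hat x)-\tau\,\frac{\big(f_{i_k}(x_k)\big)^2}{\|\nabla f_{i_k}(x_k)\|_*^2} \]
holds at every step, with $\tau=\sigma(\tfrac12-\eta)>0$ in case~(i) --- here we use that the proof of Lemma~\ref{lem:Dphi_descent_estimate_tcc}(i) only relies on $t_k=t_{k,\sigma}$, which is exactly what Algorithm~\ref{alg:NBK_relaxed} always does --- and with $\tau=\sigma(\tfrac12-\eta\tfrac M\sigma)>0$ in case~(ii), where the standing hypothesis $H_k\cap\dom\varphi\neq\emptyset$ forces, via Proposition~\ref{prop:t_min_problem_abstract_hyperplane}, that $t_k=t_{k,\varphi}$ at every step and the extra $M$-smoothness of $\varphi$ is used. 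The positivity of $\tau$ is precisely where the bounds $\eta<\tfrac12$, resp. $\eta<\tfrac\sigma{2M}$, enter.

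Next I would establish boundedness and the rate just as in Theorem~\ref{thm:Convergence_nonnegative_convex_C1_leq}: $\sigma$-strong convexity gives $\tfrac\sigma2\|x_k-\hat x\|^2\leq D_\varphi^{x_k^*}(x_k,\hat x)\leq D_\varphi^{x_0^*}(x_0,\hat x)$, so the $x_k$ remain in a bounded subset of $C$ and, by continuity of the $\nabla f_i$ (Assumption~\ref{as:Standard_assumptions}(v)), there is $\bar M>0$ with $\|\nabla f_i(x_k)\|_*^2\leq\bar M$ for all $i$ and $k$. Inserting this into the one-step estimate and taking the conditional expectation with respect to $i_0,\dots,i_{k-1}$ yields
\[ \EE\big[D_\varphi^{x_{k+1}^*}(x_{k+1},\hat x)\mid i_0,\dots,i_{k-1}\big]\leq D_\varphi^{x_k^*}(x_k,\hat x)-\frac{\tau}{\bar M}\sum_{i=1}^n p_i\big(f_i(x_k)\big)^2. \]
Taking total expectation, telescoping over $l$, and bounding the running minimum by the average then gives $\EE\big[\min_{l=1,\dots,k}\sum_{i=1}^n p_i\big(f_i(x_l)\big)^2\big]\leq\tfrac{\bar M}{\tau k}\,D_\varphi^{x_0^*}(x_0,\hat x)$, which is the asserted estimate with $C=\bar M$ after substituting the respective value of $\tau$.

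For the almost sure statement I would copy the end of the proof of Theorem~\ref{thm:Convergence_nonnegative_convex_C1_leq}: since $\sum_{i}p_if_i(x_k)^2\geq(\min_ip_i)\|f(x_k)\|_2^2$, the Robbins--Siegmund lemma~\cite{Robbins71} applied to the conditional descent inequality shows that $D_\varphi^{x_k^*}(x_k,\hat x)$ converges almost surely and $\sum_k\|f(x_k)\|_2^2<\infty$ a.s., so $f(x_k)\to0$ a.s. On this event, boundedness extracts a subsequence $x_{k_l}\to x^\sharp$; continuity gives $f(x^\sharp)=0$, and since $C$ is closed and the $x_{k_l}$ lie in $B_{r,\varphi}(\hat x)$, we obtain $x^\sharp\in S\cap B_{r,\varphi}(\hat x)$. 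Because both $x^\sharp$ and all $x_k$ lie in $B_{r,\varphi}(\hat x)$, the $\eta$-TCC is available for the pair $(x_k,x^\sharp)$, so the computation in Lemma~\ref{lem:Dphi_descent_estimate_tcc} goes through verbatim with $x^\sharp$ in place of $\hat x$ and shows that $D_\varphi^{x_k^*}(x_k,x^\sharp)$ is non-increasing; together with $D_\varphi^{x_{k_l}^*}(x_{k_l},x^\sharp)\to0$ from Assumption~\ref{as:Standard_assumptions}(iv) this forces $D_\varphi^{x_k^*}(x_k,x^\sharp)\to0$, and $\sigma$-strong convexity of $\varphi$ finally yields $x_k\to x^\sharp$.

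I expect the last paragraph to be the main obstacle: the TCC is postulated only on the fixed ball $B_{r,\varphi}(\hat x)$ around the distinguished solution $\hat x$, so to upgrade subsequential convergence to convergence of the whole sequence one must know that the limit point $x^\sharp$ still lies in that ball --- which is exactly why the invariance statement ``$x_k\in B_{r,\varphi}(\hat x)$ for all $k$'' from Lemma~\ref{lem:Dphi_descent_estimate_tcc} is indispensable; without it, one cannot deduce a monotone decrease of the Bregman distance to $x^\sharp$ and the Opial-type argument breaks down. A secondary technical point is to keep the hypothesis $H_k\cap\dom\varphi\neq\emptyset$ active throughout case~(ii), so that $t_k$ is genuinely the exact-projection step size $t_{k,\varphi}$ and part~(ii) of Lemma~\ref{lem:Dphi_descent_estimate_tcc} is legitimately applicable.
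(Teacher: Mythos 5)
Your proposal is correct and follows exactly the route the paper takes: the paper's own proof is simply ``by Lemma~\ref{lem:Dphi_descent_estimate_tcc} the iterates stay in $B_{r,\varphi}(\hat x)$, and the statements follow as in Theorem~\ref{thm:Convergence_nonnegative_convex_C1_leq} with Lemma~\ref{lem:Dphi_descent_estimate_tcc} in place of Lemma~\ref{lem:Dphi_descent_estimate}.'' Your write-up just fills in the details of that substitution (including the subtlety that the limit point must lie in the TCC ball so the Opial-type argument applies), which the paper leaves implicit.
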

	
	\begin{proof}
		By Lemma~\ref{lem:Dphi_descent_estimate_tcc}, the $x_k$ stay in~$B_{r,\varphi}(\hat x)$.
		The statements now follow as in Theorem~\ref{thm:Convergence_nonnegative_convex_C1_leq} by invoking Lemma~\ref{lem:Dphi_descent_estimate_tcc} instead of Lemma~\ref{lem:Dphi_descent_estimate}.
	\end{proof}

	Finally, we can give a local linear convergence rate under the additional assumption that the Jacobian has full column rank. For $\varphi(x)=\frac{1}{2}\|x\|_2^2$, in part (i) of the theorem we recover the result from \cite[Theorem 3.1]{WLBG22} as a special case. In both Theorem~\ref{thm:Convergence_tcc_sublinear_rate} and Theorem~\ref{thm:Convergence_tcc_linear_rate}, unfortunately we obtain a more pessimistic rate for Algorithm~\ref{alg:NBK} compared to Algorithm~\ref{alg:NBK_relaxed}, as the $\tau$ in (ii) is upper bounded by the $\tau$ in (i).

	\begin{thm} 
	\label{thm:Convergence_tcc_linear_rate}
	Let Assumption~\ref{as:Standard_assumptions} hold true and let $\varphi$ be $\sigma$-strongly convex and $M$-smooth. Let Assumption~\ref{as:tangential_cone_condition} hold with some $\eta>0$ and $\hat x\in S$ and let $x_0\in B_{r,\varphi}(\hat x)$. Moreover, assume that the Jacobian $Df(x)$ has full column rank for all $x\in B_{r,\varphi}(\hat x)$ and $p_{\min}=\min_{i=1,...,n} p_i>0$. We set
	\begin{align*}
		\kappa_{\min}  := \min_{x\in B_{r,\varphi}(\hat x)} \min_{\|y\|_2=1} \frac{\|Df(x)\|_F}{ \|Df(x)y\|_2} .
		\end{align*}
		
		\begin{enumerate}
			\item[(i)] If $\eta<\frac12$, then the iterates $x_k$ of Algorithm~\ref{alg:NBK_relaxed} fulfill that 
			\begin{align} 
				\label{eqn:tcc_linear}
				\frac{\sigma}{2}\EE\big[\|x_k-\hat x\|_2^2] 
				\leq \EE\big[D_\varphi(x_{k},\hat x)\big] 
				\leq \Big( 1- \frac{ \sigma\big(\frac12-\eta\big)p_{\min}}{(1+\eta)^2\kappa_{\min}^2} \Big)^k \EE\big[D_\varphi(x_0,\hat x)\big].
			\end{align}
			
			\item[(ii)] Let $\eta < \frac{\sigma}{2M}$. Assume that $x_k$ are the iterates of Algorithm~\ref{alg:NBK} and the condition $H_k\cap\dom\varphi\neq\emptyset$ is fulfilled for all $k$. Then it holds that 
			\begin{align} 
				\label{eqn:tcc_linear_Alg2}
				\frac{\sigma}{2}\EE\big[\|x_k-\hat x\|_2^2] 
				\leq \EE\big[D_\varphi(x_{k},\hat x)\big] 
				\leq \Big( 1- \frac{ \sigma\big(\frac12-\eta\frac{M}{\sigma}\big)p_{\min}}{M(1+\eta)^2\kappa_{\min}^2} \Big)^k \EE\big[D_\varphi(x_0,\hat x)\big].
			\end{align}
		\end{enumerate}
	\end{thm}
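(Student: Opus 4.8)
The plan is to upgrade the one-step Bregman decrease of Lemma~\ref{lem:Dphi_descent_estimate_tcc} into a \emph{geometric} contraction of $\EE\big[D_\varphi^{x_k^*}(x_k,\hat x)\big]$, using the full column rank of the Jacobian as a local error bound, and then to iterate. Since $x_0\in B_{r,\varphi}(\hat x)$ and, in case (ii), the exact line-search step $t_{k,\varphi}$ is taken at every iteration, Lemma~\ref{lem:Dphi_descent_estimate_tcc}(i) (for Algorithm~\ref{alg:NBK_relaxed}) resp.\ (ii) (for Algorithm~\ref{alg:NBK}) applies at every step and, by its last assertion, keeps all iterates in $B_{r,\varphi}(\hat x)$. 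Writing $\mathcal F_k:=\sigma(i_0,\dots,i_{k-1})$ and taking the conditional expectation over the index $i_k$ (drawn with probabilities $(p_i)$) gives
\begin{align*}
	\EE\big[D_\varphi^{x_{k+1}^*}(x_{k+1},\hat x)\mid\mathcal F_k\big]\ \le\ D_\varphi^{x_k^*}(x_k,\hat x)\ -\ \tau\sum_{i=1}^n p_i\,\frac{\big(f_i(x_k)\big)^2}{\|\nabla f_i(x_k)\|_*^2},
\end{align*}
with $\tau=\sigma\big(\tfrac12-\eta\big)$ in case (i) and $\tau=\sigma\big(\tfrac12-\eta\tfrac{M}{\sigma}\big)$ in case (ii); both are positive under the assumed bounds on $\eta$.

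The core step is a local error bound turning the sum on the right into a multiple of $D_\varphi^{x_k^*}(x_k,\hat x)$. Evaluating the $\eta$-TCC of $f_i$ at the pair $(x_k,\hat x)$ and using $f_i(\hat x)=0$ yields $|f_i(x_k)+\langle\nabla f_i(x_k),\hat x-x_k\rangle|\le\eta|f_i(x_k)|$, hence $\langle\nabla f_i(x_k),x_k-\hat x\rangle^2\le(1+\eta)^2\big(f_i(x_k)\big)^2$ by the triangle inequality. Summing over $i$ (taking $\|\cdot\|_*=\|\cdot\|_2$, consistent with the Euclidean/Frobenius quantities in the statement), bounding $\|\nabla f_i(x_k)\|_2^2\le\|Df(x_k)\|_F^2$ to extract a common denominator, and invoking the full column rank of $Df(x_k)$ in the form $\|Df(x_k)(x_k-\hat x)\|_2^2\ge\sigma_{\min}\!\big(Df(x_k)\big)^2\|x_k-\hat x\|_2^2$, the resulting quotient is controlled by the definition of $\kappa_{\min}$ and, together with $p_i\ge p_{\min}$, one obtains
\begin{align*}
	\sum_{i=1}^n p_i\,\frac{\big(f_i(x_k)\big)^2}{\|\nabla f_i(x_k)\|_*^2}\ \ge\ \frac{p_{\min}}{(1+\eta)^2\kappa_{\min}^2}\,\|x_k-\hat x\|_2^2.
\end{align*}
Since $\varphi$ is $M$-smooth, $D_\varphi^{x_k^*}(x_k,\hat x)\le\tfrac{M}{2}\|x_k-\hat x\|_2^2$ by Lemma~\ref{lem:BasicsConvexAnalysis_convex_Lsmooth}, so the right-hand side dominates a positive multiple of $D_\varphi^{x_k^*}(x_k,\hat x)$.

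Plugging this back into the descent inequality gives $\EE\big[D_\varphi^{x_{k+1}^*}(x_{k+1},\hat x)\mid\mathcal F_k\big]\le(1-\rho)\,D_\varphi^{x_k^*}(x_k,\hat x)$ with $\rho$ the contraction factor displayed in~\eqref{eqn:tcc_linear} (case (i)) resp.~\eqref{eqn:tcc_linear_Alg2} (case (ii)); taking total expectations, using the tower property, and iterating over $k$ yields $\EE\big[D_\varphi(x_k,\hat x)\big]\le(1-\rho)^k\EE\big[D_\varphi(x_0,\hat x)\big]$, and the left-most inequality in~\eqref{eqn:tcc_linear}/\eqref{eqn:tcc_linear_Alg2} is just $\tfrac{\sigma}{2}\|x_k-\hat x\|_2^2\le D_\varphi(x_k,\hat x)$ from $\sigma$-strong convexity of $\varphi$. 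I expect the main obstacle to be the middle paragraph: one has to keep careful track of which norm enters where (the dual norm $\|\cdot\|_*$ of the step size versus the Euclidean and Frobenius norms in $\kappa_{\min}$ and in the full-rank estimate), to use compactness of $B_{r,\varphi}(\hat x)$ (which follows from strong convexity) to guarantee that $\kappa_{\min}$ is finite and the error-bound constant strictly positive, and to bundle the constants from Lemma~\ref{lem:Dphi_descent_estimate_tcc}, the TCC, and the smoothness/strong-convexity inequalities so that exactly $\rho$ appears; once the error bound is in place, the remaining telescoping/tower argument is identical to the ones already used in Theorem~\ref{thm:Convergence_nonnegative_convex_C1_leq} and Theorem~\ref{thm:Convergence_tcc_sublinear_rate}.
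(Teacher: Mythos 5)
Your proposal is correct and follows essentially the same route as the paper: the one-step decrease of Lemma~\ref{lem:Dphi_descent_estimate_tcc}, the TCC bound $|\langle\nabla f_i(x_k),\hat x-x_k\rangle|\le(1+\eta)|f_i(x_k)|$, a common-denominator step to form $\|Df(x_k)(\hat x-x_k)\|_2^2/\|Df(x_k)\|_F^2$, the full-rank/$\kappa_{\min}$ bound, $M$-smoothness to pass from $\|x_k-\hat x\|_2^2$ back to $D_\varphi(x_k,\hat x)$, and the tower-property iteration. The only cosmetic difference is that you bound each $\|\nabla f_i(x_k)\|_2^2$ by $\|Df(x_k)\|_F^2$ directly instead of citing Lemma~\ref{lem:quot_diff_estimate}, which is the same estimate in disguise.
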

	
	For the proof, as in~\cite{WLBG22} we use the following auxiliary lemma.
	
	\begin{lem}
		\label{lem:quot_diff_estimate}
		Let $a_1,...,a_n\geq 0$ and $b_1,...,b_n>0$. Then it holds that 
		\[ \sum_{i=1}^n \frac{a_i}{b_i} \geq \frac{\sum_{i=1}^n a_i}{\sum_{i=1}^n b_i}. \] 
	\end{lem}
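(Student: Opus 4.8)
The plan is to reduce the inequality to a term-by-term comparison against a common denominator. I would set $B := \sum_{j=1}^n b_j$, which is strictly positive because every $b_j > 0$. Since each individual denominator satisfies $b_i \leq B$, and since the numerators obey $a_i \geq 0$, enlarging a denominator can only decrease the corresponding summand. Concretely, for every index $i$ I would record the elementary bound
\[ \frac{a_i}{b_i} \geq \frac{a_i}{B}, \]
which is nothing more than the monotonicity of $t \mapsto a_i/t$ on $(0,\infty)$ for fixed $a_i \geq 0$.

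Summing this family of inequalities over $i = 1, \dots, n$ then yields
\[ \sum_{i=1}^n \frac{a_i}{b_i} \geq \sum_{i=1}^n \frac{a_i}{B} = \frac{1}{B} \sum_{i=1}^n a_i = \frac{\sum_{i=1}^n a_i}{\sum_{j=1}^n b_j}, \]
which is exactly the assertion. The only hypotheses used are positivity of the $b_j$ (so that $B > 0$, all fractions are well defined, and $b_i \leq B$) and nonnegativity of the $a_i$ (so that the per-index monotonicity holds); the degenerate case $a_i = 0$ contributes the trivial inequality $0 \geq 0$ and needs no separate treatment.

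As an alternative route I would note that the lemma also follows by induction on $n$ from the two-term case $\tfrac{a}{b} + \tfrac{c}{d} \geq \tfrac{a+c}{b+d}$ for $a,c \geq 0$ and $b,d > 0$; after clearing the positive denominator $bd(b+d)$, this reduces to $a d^2 + c b^2 \geq 0$, which is manifestly true. However, the direct common-denominator argument above avoids all algebraic bookkeeping and extends verbatim to arbitrary index sets, so I would present that one. There is no genuine obstacle here: the statement is an immediate consequence of denominator monotonicity, and I expect the entire proof to fit in two short lines.
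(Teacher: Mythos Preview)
Your proof is correct. The paper argues slightly differently: it multiplies through by $B=\sum_i b_i$ and expands the product $\bigl(\sum_i b_i\bigr)\bigl(\sum_j a_j/b_j\bigr)=\sum_{i,j} b_i a_j/b_j$, then drops the nonnegative off-diagonal terms $i\neq j$ to obtain $\sum_i a_i$. Your route instead bounds each summand individually via $b_i\leq B$ before summing. Both are one-line elementary arguments using only positivity of the $b_i$ and nonnegativity of the $a_i$; your version is marginally more direct in that it avoids the double sum, while the paper's version makes the ``discarded mass'' $\sum_{i\neq j} b_i a_j/b_j$ explicit. There is nothing to add or correct.
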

	\begin{proof}
		Since $a_{i},b_{i}\geq 0$ it holds that 
		\begin{align*}
			\Big( \sum_{i=1}^n b_i \Big) \Big(  \sum_{j=1}^n \frac{a_j}{b_j} \Big) 
			= \sum_{i,j=1}^n b_i \frac{a_j}{b_j}
			\geq \sum_{i=1}^n b_i \frac{a_i}{b_i} = \sum_{i=1}^n a_i.
		\end{align*}
	\end{proof}

	\begin{proof}[Proof of Theorem~\ref{thm:Convergence_tcc_linear_rate}] 
		By Assumption \ref{as:tangential_cone_condition} and the fact that $f(\hat x)=0$, we can estimate 
		\begin{align*}
			|\langle \nabla f_{i_k}(x_k), \hat x - x_k\rangle| 		
			&\leq |f_{i_k}(x_k) + \langle \nabla f_{i_k}(x_k), \hat x - x_k\rangle - f_{i_k}(\hat x)| + |f_{i_k}(x_k)-f_{i_k}(\hat x)| \\
			&\leq (1+\eta) |f_{i_k}(x_k)-f_{i_k}(\hat x)| = (1+\eta) |f_{i_k}(x_k)|.
		\end{align*}
		In all cases of the assumption, inserting the above estimate we respectively conclude that 
		\begin{align*} 
			D_\varphi(x_{k+1},\hat x) 
			\leq D_\varphi(x_k,\hat x) - \frac{\tau}{(1+\eta)^2} \cdot \frac{|\langle \nabla f_{i_k}(x_k), \hat x-x_k\rangle|^2}{\|\nabla f_i(x_k)\|_2^2}.
		\end{align*}
		
		Taking expectation and using Lemma~\ref{lem:quot_diff_estimate} as well as the definition of $\kappa_{\min}$, we conclude that
		\begin{align*} 
			\EE\big[D_\varphi(x_{k+1},\hat x)\big]  &\leq 
			\EE\big[D_\varphi(x_{k},\hat x)\big] - \frac{\tau}{(1+\eta)^2} \cdot \EE\Big[ \sum_{i=1}^n p_i \frac{|\langle \nabla f_i(x_k), \hat x-x_k\rangle|^2}{\|\nabla f_i(x_k)\|_2^2} \Big] \\
			&\leq 
			\EE\big[D_\varphi(x_{k},\hat x)\big] - \frac{\tau p_{\min}}{(1+\eta)^2}  \cdot \EE\Big[\frac{\|Df(x_k)(\hat x-x_k)\|_2^2}{\|Df(x_k)\|_F^2} \Big]\\
			&\leq \EE\big[D_\varphi(x_{k},\hat x)\big] - \EE\Big[ \frac{\tau p_{\min}}{(1+\eta)^2 \kappa_{\min}^2} \cdot \|x_k-\hat x\|_2^2 \Big] \\
			&\leq \Big( 1- \frac{ 2\tau p_{\min}}{(1+\eta)^2M \kappa_{\min}^2} \Big) \EE\big[D_\varphi(x_{k},\hat x)\big].
		\end{align*}
	\end{proof}

\section{Numerical experiments}

In this section, we evaluate the performance of the NBK method. In the first experiment we used NBK to find sparse solutions with the nonsmooth DGF $\varphi(x) = \frac{1}{2}\|x\|_2^2+\lambda\|x\|_1$ for unconstrained quadratic equations, that is, with $C=\RR^d$. Next, we employed the negative entropy DGF over the probability simplex $C=\Delta^{d-1}$ to solve simplex-constrained linear equations as well as the \emph{left-stochastic decomposition problem}, a quadratic problem over a product of probability simplices with applications in clustering~\cite{AFGK13}. All the methods were implemented in MATLAB on a macbook with 1,2 GHz Quad-Core Intel Core i7 processor and 16 GB memory. Code is available at \url{https://github.com/MaxiWk/Bregman-Kaczmarz}.

\subsection{Sparse solutions of quadratic equations}
As the first example, we considered multinomial quadratic equations
\[ f_i(x) = \frac12\langle x,A^{(i)}x\rangle + \langle b^{(i)},x\rangle + c^{(i)}=0 \]
with $A^{(i)}\in\RR^{d\times d}$, $b^{(i)}\in\RR^d$, $c^{(i)}\in\RR$ and $i=1,...,n$. We investigated if Algorithm~\ref{alg:NBK} (NBK method) and Algorithm~\ref{alg:NBK_relaxed} (rNBK method) are capable of finding a sparse solution $\hat x\in\RR^d$ by using the DGF $\varphi(x)=\lambda\|x\|_1+\frac12\|x\|_2^2$ and tested both methods against the euclidean nonlinear Kaczmarz method~(NK). As it holds $\dom \varphi = \RR^d$, it is always possible to choose the step size $t_{k,\varphi}$ from~\eqref{eqn:BregProj_stepsize} in the NBK method. Moreover, the step size can be computed exactly by a sorting procedure, as $\varphi^*$ is a continuous piecewise quadratic function, see Example~\ref{ex:varphi_from_sparse_Kaczmarz}. In order to guarantee existence of a sparse solution, we chose a sparse vector $\hat x\in\RR^d$, sampled the data $A^{(i)}, b^{(i)}$ randomly with entries from the standard normal distribution and set 
\[ c^{(i)} = - \Big( \frac12\langle x,A^{(i)}x\rangle + \langle b^{(i)},x\rangle \Big). \]
In all examples, the nonzero part of $\hat x$ and the initial subgradient $x_0^*$ were sampled from the standard normal distribution. The initial vector $x_0$ was computed by $x_0=\nabla\varphi^*(x_0^*) = S_\lambda(x_0^*)$.

From the updates it is evident that computational cost per iteration is cheapest for the NK method, slightly more expensive for the rNBK method and most expensive for the NBK method. To examine the case $d<n$, we chose $A^{(i)}\sim\mathcal N(0,1)^{500\times 500}$ for $i=1,...,1000$, $\hat x$ with $25$ nonzero entries and $\lambda=10$ and performed 20 random repeats. Figure~\ref{fig:sparse_quadratic_overdet} shows that the NBK method clearly outperforms the other two methods in this situation, even despite the higher cost per iteration. 

Figure~\ref{fig:sparse_quadratic_underdet} illustrates that in the case $d>n$, both the NBK method and the rNBK method can fail to converge or converge very slowly.

\begin{figure}[htb]
	\begin{center}

		\begin{tabular}{rl} 
			\includegraphics{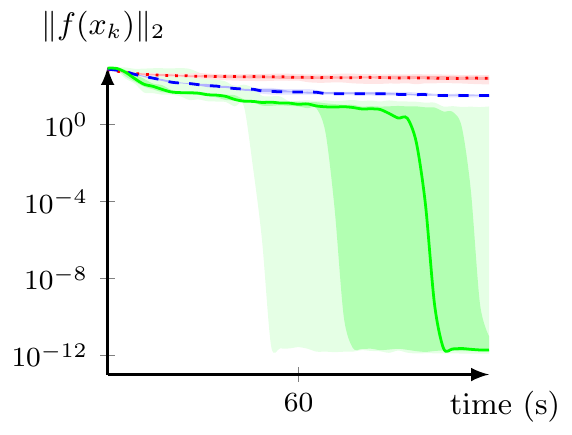}
			&
			\includegraphics{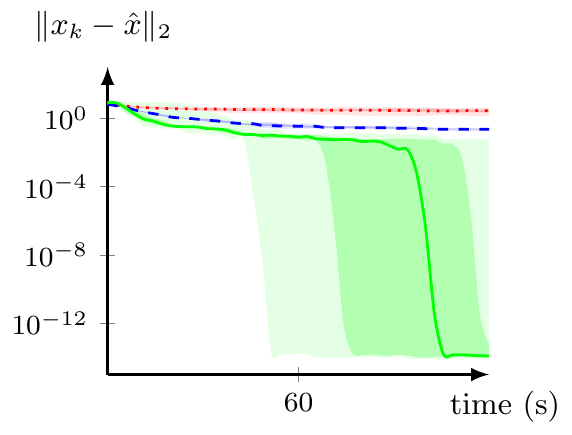}
		\end{tabular}
		\includegraphics{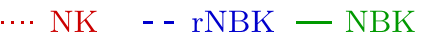}
		
	\end{center}
	\caption{Experiment with quadratic equations, $(n,d) = (1000,500)$, $\hat x$ with $50$ nonzero entries, 20 random repeats. Left: plot of residual $\|f(x_k)\|_2$, right: plot of distance to solution $\hat x$, both over computation time. Thick line shows median over all trials, light area is between min and max, darker area indicates 25th and 75th quantile.}
	\label{fig:sparse_quadratic_overdet}
\end{figure}

\begin{figure}[htb]
	\begin{center}

		\begin{tabular}{rl} 			
			\includegraphics{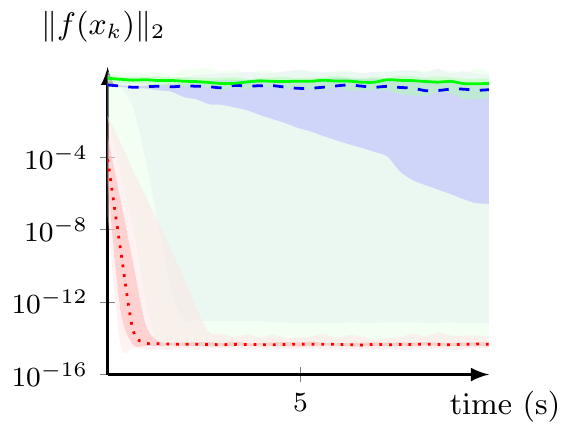}
			&
			\includegraphics{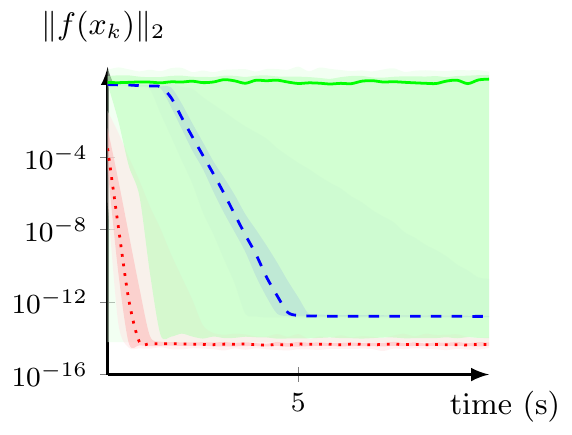}
		\end{tabular}
		
		\includegraphics{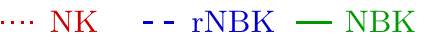}
		
	\end{center}
	\caption{Experiment with quadratic equations, $(n,d) = (50,100)$, $\hat x$ with $5$ nonzero entries, $50$ random repeats, plot of residual $\|f(x_k)\|_2$ against computation time. Left: $\lambda=2$, right: $\lambda=5$. Thick line shows median over all trials, light area is between min and max, darker area indicates 25th and 75th quantile.}
	\label{fig:sparse_quadratic_underdet}
\end{figure}

\subsection{Linear systems on the probability simplex}
We tested our method on linear systems constrained to the probability simplex
\begin{align}
	\label{eqn:linear_system_on_probability_simplex} 
	\text{find } x\in\Delta^{d-1}: \qquad Ax=b.
\end{align}
That is, in problem~\eqref{eqn:problem} we chose $f_i=\langle a_i,x\rangle - b_i$ with $D=\RR^d$ and viewed $C=\Delta^{d-1}$ as the additional constraint. For Algorithm~\ref{alg:NBK}, we used the simplex-restricted negative entropy function from Example~\ref{ex:Simplex_entropy}, i.e. we set \[ \varphi(x) = \begin{cases}
	\sum_{i=1}^d x_i \log(x_i), & x\in\Delta^{d-1} , \\
	+\infty, & \text{otherwise.}
\end{cases} \]
We know from Example~\ref{ex:Simplex_entropy} that $\varphi$ is $1$-strongly convex w.r.t. the $1$-norm $\|\cdot\|_1$. Therefore, as the second method we considered the rNBK iteration given by Algorithm~\ref{alg:NBK_relaxed} with $\sigma=1$ and $\|\cdot\|_*=\|\cdot\|_\infty$. As a benchmark, we considered a POCS (orthogonal projection) method which computes an orthogonal projection onto a row equation, followed by an orthogonal projection onto the probability simplex, see Algorithm~\ref{alg:EuclProj_linear_system_probability_simplex} listed below. We note that in \cite[Theorem 3.3]{KS21} it has been proved that the distance of the iterates of the POCS method and the NBK method to the set of solutions on $\Delta^{d-1}_+$ decays with an expected linear rate, if there exists a solution in $\Delta^{d-1}_+$. Theorem~\ref{thm:Convergence_nonnegative_convex_C1_leq} shows at least a.s. convergence of the iterates towards a solution for all three methods.

We note that it holds $\nabla f_{i_k}(x)=a_{i_k}$ for all $x$ and $\beta_k = b_{i_k}$ in the NBK method. If problem~\eqref{eqn:linear_system_on_probability_simplex} has a solution, then condition~\eqref{eqn:Condition_hyperplane_nonempty_intersection} is fulfilled in each step of the NBK method, so the method takes always the step size $t_k=t_{k,\varphi}$ from the exact Bregman projection. For the projection onto the simplex in Algorithm~\ref{alg:EuclProj_linear_system_probability_simplex}, we used the pseudocode from~\cite{PW13}, see also~\cite{Brucker84,DS08,BG84}. 

\begin{algorithm}[htb]
	\begin{algorithmic}[1]
		\State Input: probabilities $p_i>0$ for $i=1,...,n$ 
		\State Initialization: $x_0\in\Delta^{d-1}$
		\For{$k=0,1,...$}
		\State choose $i_k\in\{1,...,n\}$ according to the probabilities $p_1,...,p_n$
		\State project $y_{k+1} = \Pi_{H(a_i,b_i)}(x_k) = x_k - \frac{\langle a_{i_k},x_k\rangle-b_{i_k}}{\|a_{i_k}\|_2^2} a_{i_k}$
		\State project $x_{k+1} = \Pi_{\Delta^{d-1}}(y_{k+1})$
		\EndFor
	\end{algorithmic}
	\caption{Alternating euclidean projections (POCS method) for~\eqref{eqn:linear_system_on_probability_simplex}}
	\label{alg:EuclProj_linear_system_probability_simplex}
\end{algorithm}

In our experiments we noticed that in large dimensions, such as $d\geq 100$, solving the Bregman projection~\eqref{eqn:BregProj_stepsize} up to a tolerance of $\epsilon=10^{-9}$ takes less than half as much computation time as the simplex projection. As these two are the dominant operations in these methods, the NBK updates are computationally cheaper than the NK updates in the high dimensional setting. However, the examples will show that convergence quality of the methods depends on the distribution of the entries of $A$. All methods were observed to converge linearly.

In the following experiments, we took different choices of $A$ and set the right-hand side to $b=A\hat x$ with a point $\hat x$ drawn from the uniform distribution on the probability simplex $\Delta^{d-1}$. All methods were initialized with the center point $x_0=(\frac{1}{d}, ..., \frac{1}{d}).$ 

For our first experiment, we chose standard normal entries $A\sim\mathcal N(0,1)^{n\times d}$ with $(n,d) = (500,200)$ and $(n,d)=(200,500)$. Figure~\ref{fig:simplex_linear_stdn_distr} shows that in this setting, the POCS method achieves much faster convergence in the overdetermined case $(n,d) = (500,200)$ than the NBK method, whereas both methods perform roughly the same in the underdetermined case $(d,n) = (200,500)$. The rNBK method is considerably slower than the other two methods, which shows that the computation of the $t_{k,\varphi}$ step size for NBK pays off. 

In our second experiment, we built up the matrix from uniformly distributed entries $A\sim\mathcal U([0,1])^{n\times d}$ and $A\sim\mathcal U([0.9,1])^{n\times d}$ with $(n,d)=(200,500)$. The results are summarized in Figure~\ref{fig:simplex_linear_unif_distr}. For the Kaczmarz method it has been observed in practice that so called 'redundant' rows of the matrix $A$ deteriorate the convergence of the method~\cite{JNY22}. This effect can also occur with the POCS method, as it also relies on euclidean projections. Remarkably, we can see that this is not the case for the NBK method and it clearly outperforms the POCS method and the rNBK method. This in particular shows that the multiplicative update used in both the rNBK method and the NBK method is not enough to overcome the difficulty of redundancy- to achieve fast convergence, it must be combined with the appropriate step size which is used by the proposed NBK method.

Finally, we illustrate the effect of the accuracy $\epsilon$ in step size computation for the NBK method. We chose $A\sim U([0,1])^{n\times d}$ and $\epsilon=10^{-9}$ and compared with the larger tolerance $\epsilon=10^{-5}$. Figure~\ref{fig:simplex_linear_effect_of_inaccuracy} shows that, with $\epsilon=10^{-5}$, the residual plateaus 
at a certain threshold. In contrast with $\epsilon=10^{-9}$, the residual does not plateau, and despite the more costly computation of the step size, the NBK method is still competitive with respect to time.
Hence, for the problem of linear equations over the probability simplex we recommend to solve the step size problem up to high precision.


\begin{figure}[htp]
	\begin{center}

		\begin{tabular}{m{0.5\textwidth}m{0.5\textwidth}}
					\hspace{-.75cm}
			\includegraphics{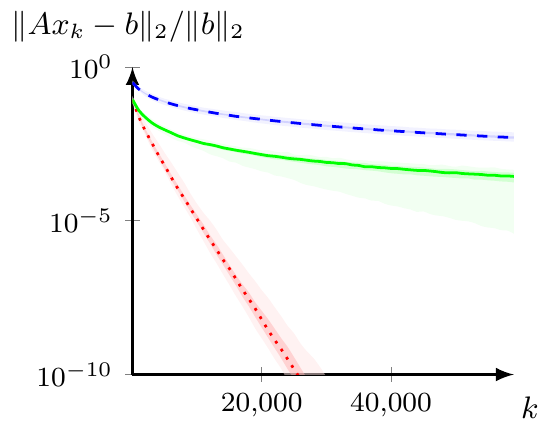}
			&
					\hspace{-.75cm}
			\includegraphics{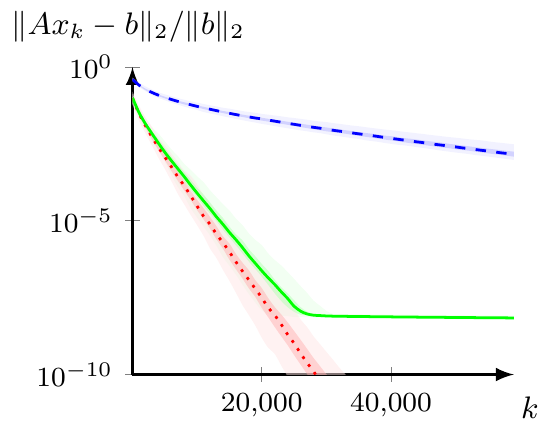}
			\\
					\hspace{-.75cm}
			\includegraphics{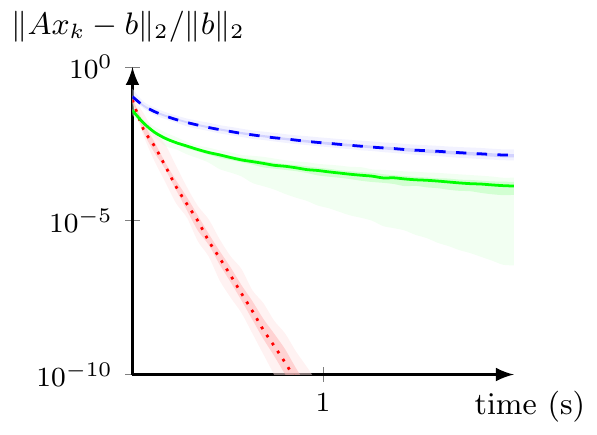}
			&
					\hspace{-.75cm}
			\includegraphics{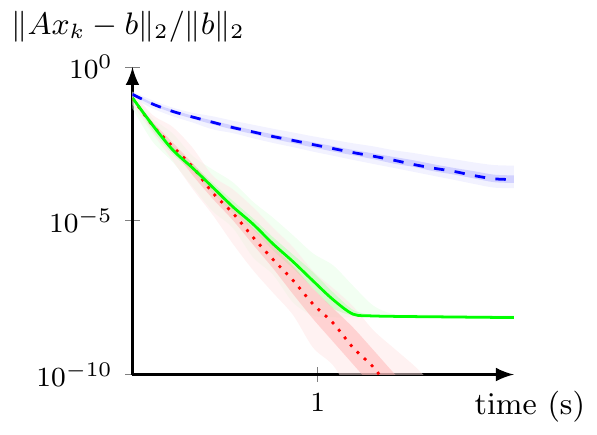}
		\end{tabular}
		
		\includegraphics{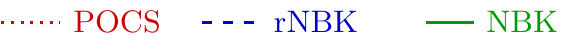}
		
		
		
	\end{center}
	\caption{Experiment with linear equations on the probability simplex, plot of relative residuals averaged over 50 random examples against iterations ($k$) and computation time. Left column: $A\sim\mathcal N(0,1)^{500\times 200}$, right column: $A\sim\mathcal N(0,1)^{200\times 500}$. Thick line shows median over all trials, light area is between min and max, darker area indicates 25th and 75th quantile.}
	\label{fig:simplex_linear_stdn_distr}
\end{figure}


\begin{figure}[htb]
	\begin{center}

		\begin{tabular}{m{0.5\textwidth}m{0.5\textwidth}}
					\hspace{-.75cm}
			\includegraphics{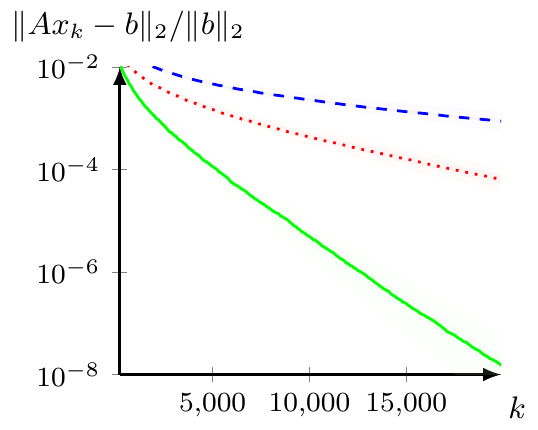}
			&
					\hspace{-.75cm}
			\includegraphics{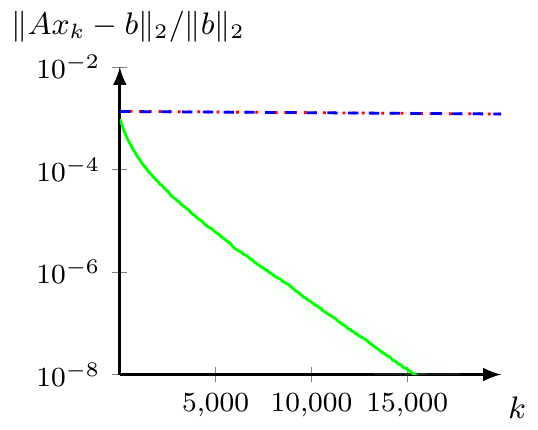}
			\\
					\hspace{-.75cm}
			\includegraphics{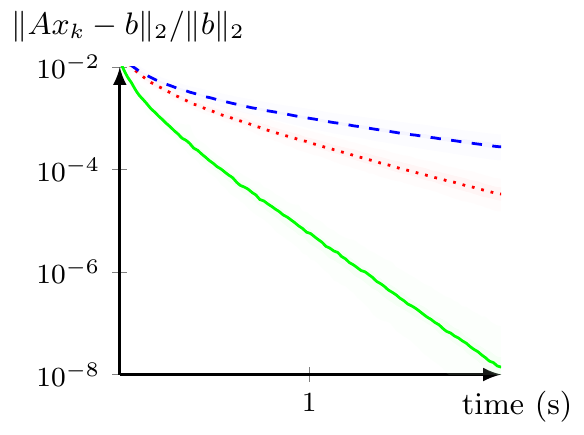}
			&
					\hspace{-.75cm}
			\includegraphics{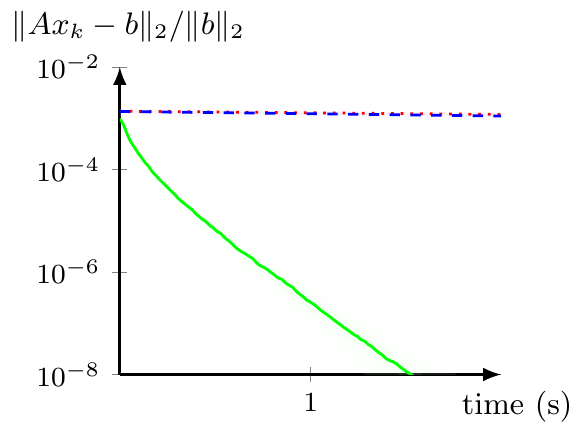}
		\end{tabular}
		
		\includegraphics{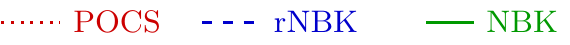}
		
	\end{center}
	\caption{Experiment with linear equations on the probability simplex, plot of relative residuals averaged over 50 random examples against iterations ($k$) and computation time. Left column: $A\sim\mathcal U([0,1])^{200\times 500}$, right column: $A\sim\mathcal U([0.9,1])^{200\times 500}$. Thick line shows median over all trials, light area is between min and max, darker area indicates 25th and 75th quantile.}
	\label{fig:simplex_linear_unif_distr}
\end{figure}


\begin{figure}[htb]
	\begin{center}

		\hspace{-.75cm}
		\begin{tabular}{rl} 			
			\includegraphics{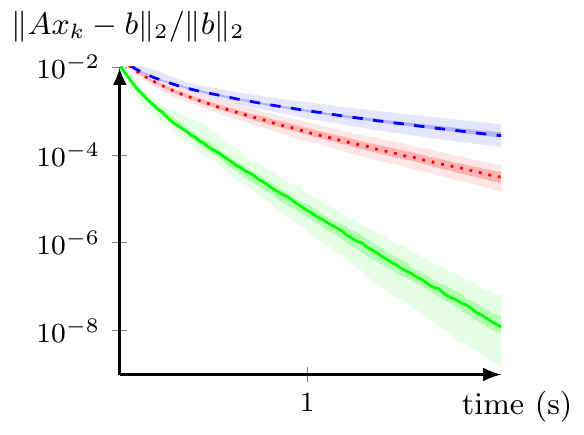}
			&
			\includegraphics{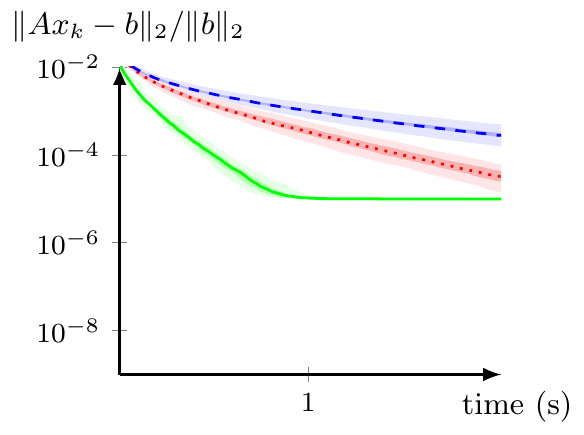}
		\end{tabular}
		
		\includegraphics{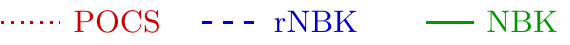}
		
	\end{center}
	\caption{Experiment with linear equations on the probability simplex, plot of relative residuals averaged over 50 random examples against computation time. In both examples, $A\sim\mathcal U([0,1])^{200\times 500}$. Left: $\epsilon=10^{-9}$, right: $\epsilon=10^{-5}$ in NBK method. Thick line shows median over all trials, light area is between min and max, darker area indicates 25th and 75th quantile.}
	\label{fig:simplex_linear_effect_of_inaccuracy}
\end{figure}

\subsection{Left stochastic decomposition}
\label{subsec:LSD}

The \emph{left stochastic decomposition} (LSD) problem can be formulated as follows:
\begin{align}
	\label{eqn:LSD_problem}
	\text{find } X\in L^{r\times m}: \qquad X^TX = A,
\end{align}
where \[L^{r,m}:=\big\{P\in\RR^{r\times m}_{\geq 0}: P^T\mathbbm{1}_r = \mathbbm{1}_m\}\] is the set of left stochastic matrices and $A\in\RR^{r\times m}$ is a given nonnegative matrix. The problem is equivalent to the so-called \emph{soft-K-means} problem and hence has applications in clustering~\cite{AFGK13}. We can view~\eqref{eqn:LSD_problem} as an instance of problem~\eqref{eqn:problem} with component equations
\begin{align*}
	f_{i,j}(X) = \langle X_{:,i}, X_{:,j} \rangle - A_{i,j} = 0 \qquad \text{for } i=1,...,r, \ j=1,...,m
\end{align*}
and $C=L^{r\times m}\cong \left(\Delta^{r-1}\right)^m$, where $X_{:,i}$ denotes the $i$th column of $X$. For Algorithm~\ref{alg:NBK} we chose the DGF from Example~\ref{ex:DGF_and_NBK_for_cartesian_products} with the simplex-restricted negative entropy $\varphi_i=\varphi$ from Example~\ref{ex:Simplex_entropy}.
Since $f_{i,j}$ depends on at most two columns of $X$, Algorithm~\ref{alg:NBK} acts on $\Delta^{r-1}$ or $\Delta^{r-1}\times \Delta^{r-1}$ in each step. Therefore, we applied the steps from Example~\ref{ex:Simplex_entropy} in the first case, and from Example~\ref{ex:Double_simplex_entropy} in the second case.

We compared the performance of Algorithm~\ref{alg:NBK} and Algorithm~\ref{alg:NBK_relaxed} to a projected nonlinear Kaczmarz method given by Algorithm~\ref{alg:EuclProj_LSD}. Here, by $X_{k,:,i}$ we refer to the $i$th column of the $k$th iterate matrix. In all examples, we set $A=\hat X^T \hat X$, where the columns of $\hat X$ were sampled according to the uniform distribution on~$\Delta^{r-1}$.

\begin{algorithm}[htb]
	\begin{algorithmic}[1]
		\State Input: $\sigma>0$ and probabilities $p_{ij}$ for $i=1,...,r$ and $j=1,...,m$
		\State Initialization: $X_0\in L^{r\times m}$
		\For{$k=0,1,...$}
		\State choose $i_k\in\{1,...,r\}$ and $ j_k\in \{1,...,m\}$ according to $p_{1r},...,p_{rm}$
		\State set $\beta_k = \langle \nabla f_{i_k}(x_k),x_k\rangle - f_{i_k}(x_k) = \langle X_{k,:,i_k}, X_{k,:,j_k}\rangle + A_{i_k,j_k}$
		\If{$i_k=j_k$}
		\State project $Y_{k+1,:,i_k} = \Pi_{H(\alpha_k,\beta_k)}       
		X_{k,:,i_k}$ with $\alpha_k = 2X_{k,:,i_k}$ 
		\State project $X_{k+1,:,i_k} = \Pi_{\Delta^{m-1}}(Y_{k+1,:,i_k})$
		\EndIf
		\If{$i_k\neq j_k$}
		\State set $t_k = \frac{\langle X_{k,:,i_k}, X_{k,:,j_k} \rangle - A_{i_k,j_k}}{\|X_{k,:,i_k}\|_2^2 + \|X_{k,:,j_k}\|_2^2}$ \vspace{0.1cm}
		\State set $Y_{k+1,:,i_k} = Y_{k,:,i_k} - t_k Y_{k,:,j_k}$ 
		\State set $Y_{k+1,:,j_k} = Y_{k,:,j_k} - t_k Y_{k,:,i_k}$ 
		\State project $X_{k+1,:,i_k} = \Pi_{\Delta^{r-1}}(Y_{k+1,:,i_k})$
		\State project $X_{k+1,:,j_k} = \Pi_{\Delta^{r-1}}(Y_{k+1,:,j_k})$
		\EndIf 
		\EndFor
	\end{algorithmic}
	\caption{Projected nonlinear Kaczmarz method (PNK) for~\eqref{eqn:LSD_problem}}
	\label{alg:EuclProj_LSD}
\end{algorithm}

We observed that Algorithm~\ref{alg:NBK} (NBK method) gives the fastest convergence, if $r$ is not much smaller than $m$, see Figure~\ref{fig:LSD_100x50} and Figure~\ref{fig:LSD_50x100}. In both experiments, we noticed that condition~\eqref{eqn:Condition_hyperplane_nonempty_intersection} was actually fulfilled in each step, but checking did not show a notable difference in performance. The most interesting setting for clustering is that $r$ is very small and $m$ is large, as $r$ is the number of clusters~\cite{AFGK13}. However, it appears unclear if the NBK or the PNK method is a better choice for this problem size, as Figure~\ref{fig:LSD_3x100} shows. In this experiment, condition \eqref{eqn:Condition_hyperplane_nonempty_intersection} was not always fulfilled in the NBK method and we needed to employ the globalized Newton method together with an additional condition to the step size approximation, see Appendix for details. Finally, we can again see that Algorithm~\ref{alg:NBK_relaxed} is clearly outperformed by the other two methods in all experiments.

\begin{figure}[htb]
	
	\begin{center}

		\begin{tabular}{rl} 			
			\includegraphics{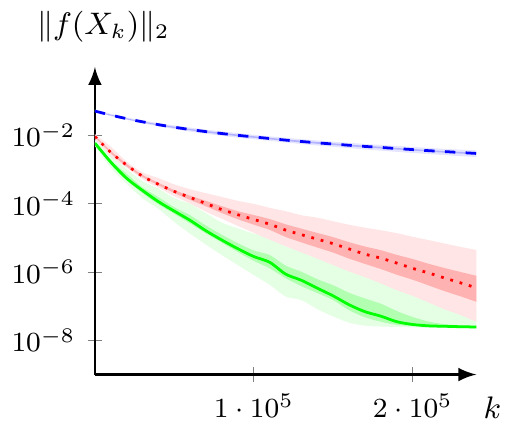}
			&
			\includegraphics{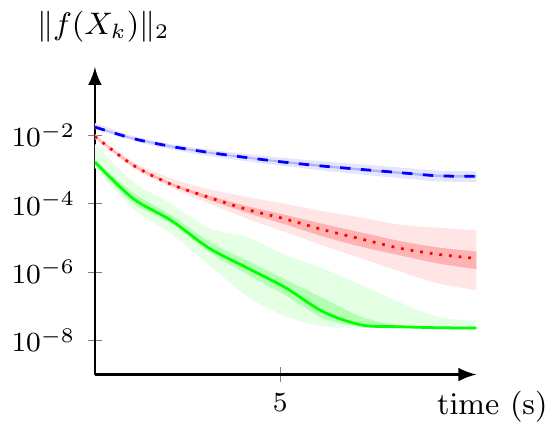}
		\end{tabular}
		
		\includegraphics{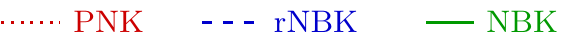}
		
		\vspace{0.3cm}
		
		\caption{Experiment 'Left stochastic decomposition problem' with $r=100, m=50$. Residuals $\|f(X^{(k)})\|_2$ averaged over 50 random examples against outer iterations $k$ (left) and computation time (right). Thick line shows median over all trials, light area is between min and max, darker area indicates 25th and 75th quantile.}
		\label{fig:LSD_100x50}
		
	\end{center}
	
\end{figure}

\begin{figure}[htb]
	
	\begin{center}

		\begin{tabular}{rl} 			
			\includegraphics{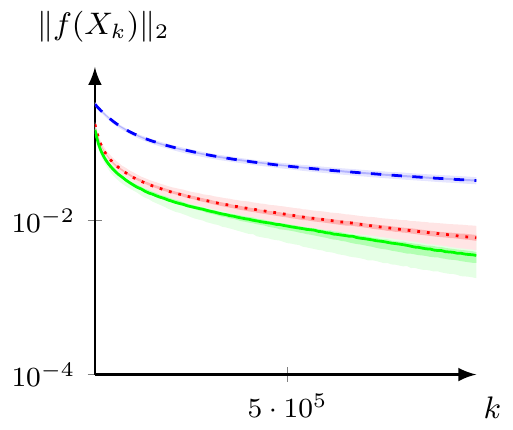}
			&
			\includegraphics{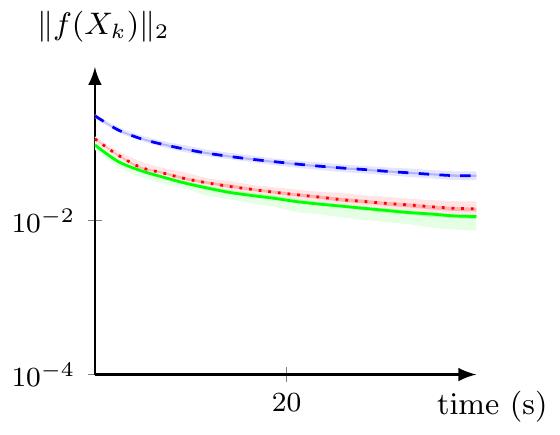}
		\end{tabular}
		
		\includegraphics{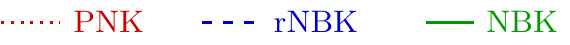}
		
		\vspace{0.3cm}
		
		\caption{Experiment 'Left stochastic decomposition problem' with $r=50, m=100$, plot of residuals $\|f(X^{(k)})\|_2$ averaged over 50 random examples against iterations (left) and computation time (right). Thick line shows median over all trials, light area is between min and max, darker area indicates 25th and 75th quantile.}
		\label{fig:LSD_50x100}
		
	\end{center}
	
\end{figure}

\begin{figure}[htb]
	
	\begin{center}

		\begin{tabular}{rl} 			
			\includegraphics{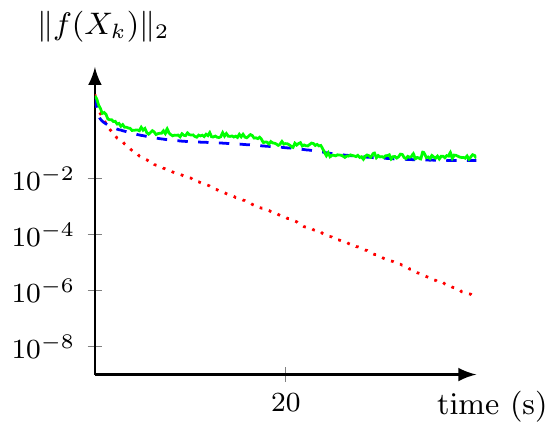}
			&
			\includegraphics{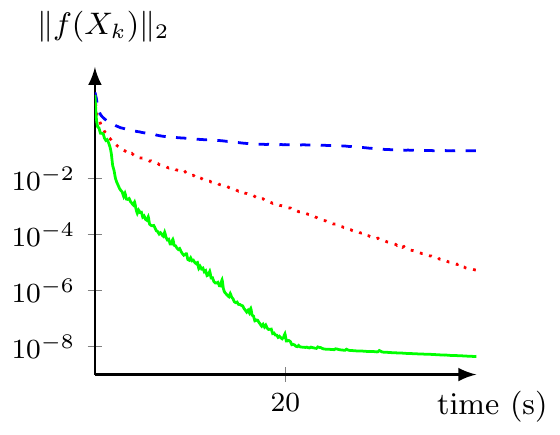} 
		\end{tabular}
		
		\includegraphics{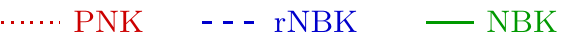}
		
		\vspace{0.3cm}
		
		\caption{Experiment 'Left stochastic decomposition problem' with $r=3, m=100$, residuals $\|f(X_k)\|_2$ against computation time. Left and right: Two random examples with different convergence behavior. Thick line shows median over all trials, light area is between min and max, darker area indicates 25th and 75th quantile.}
		\label{fig:LSD_3x100}
		
	\end{center}
	
\end{figure}

\section{Conclusions and further research}

We provided a general Bregman projection method for solving nonlinear equations, where each iteration needs only to sample one equation to make progress towards the solution. As such, the cost of one iteration scales independently of the number of equations. Our method is also a generalization of the nonlinear Kaczmarz method which allows for additional simple constraints or sparsity inducing regularizers. We provide two global convergence theorems under different settings and find a number of relevant experimental settings where instantiations of our method are efficient. 

Convergence for non-strongly convex distance generating functions $\varphi$, as well as a suitable scope of $\sigma$ in this setting, has so far not been explored.

Our work also opens up the possibility of incorporating more structure into SGD type methods in the interpolation setting as has been done in~\cite{JLZ23} for the linear case. In this setting each $f_i(x)$ is a positive loss function over the $i$th data point. If we knew in addition that some of the coordinates of $x$ are meant to be positive, or that $x$ is a discrete probability measure, then our nonlinear Bregman projection methods applied to the interpolation equations would provide new adaptive step sizes for stochastic mirror descent. Further venues for exploring would be to relax the interpolation equations, say into inequalities~\cite{slackpolyak2022}, and applying an analogous Bregman projections to incorporate more structure. We will leave this to future work.


\appendix

\section{Newton's method for line search problem \eqref{eqn:BregProj_stepsize}}

We compute the Newton update for problem~\eqref{eqn:BregProj_stepsize} for general $\varphi$ with $C^2$-smooth conjugate $\varphi^*$. 
The function $g_{i_k,x_k^*}$ from \eqref{eqn:g_function}
has first derivative
\begin{align*}
	g_{i_k,x_k^*}'(t) &= \big\langle\nabla\varphi^*(x_k^*-t\nabla f_{i_k}(x_k)), -\nabla f_{i_k}(x_k)\big\rangle + \beta_k \\
	&=\big\langle x_k - \nabla\varphi^*(x_k^* - t\nabla f_{i_k}(x_k)), \ \nabla f_{i_k}(x_k) \big\rangle - f_{i_k}(x_k)
\end{align*}
and second derivative
\begin{align*} 
	g_{i_k,x_k^*}''(t) = \big\langle \nabla^2 \varphi^*(x_k^* - t\nabla f_{i_k}(x_k)) \nabla f_{i_k}(x_k), \ \nabla f_{i_k}(x_k) \big\rangle \geq 0.
\end{align*}
If it holds $g_{f_{i_k},x_k^*}''(t)>0$, Newton's method for~\eqref{eqn:BregProj_stepsize} reads  
\begin{align*}
	t_{k,l+1} = t_{k,l} - \frac{g_{i_k,x_k^*}'(t_{k,l})}{g_{i_k,x_k^*}''(t_{k,l})}.
\end{align*}
As an initial value we use the step size $t_{k,0}:= \frac{ f_{i_k}(x_k)}{\|\nabla f_{i_k}(x_k)\|_2^2}$ from the $\ell_2$-projection of $x_k$ onto $H_k$. We propose to stop the method if $|g_{i_k,x_k^*}'(t_{k,l})|<\epsilon$. Typical values we used for our numerical examples were $\epsilon\in\{ 10^{-5}, 10^{-6}, 10^{-9}, 10^{-15}\}$. \\
It may happen that problem~\eqref{eqn:BregProj_stepsize} is ill-conditioned, in which case the Newton iterates $t_{k,l}$ may diverge quickly to $\pm\infty$ or alternate between two values. We have observed this can e.g. happen for the problem on left stochastic decomposition in Subsection~\ref{subsec:LSD}, if the number $m$ of rows of the matrix $X$ in the problem is small. 

In case that the Newton method diverges, we used the recently proposed globalized Newton method from~\cite{Mishchenko21}, which reads
\[ t_{k,l+1} = t_{k,l} - \frac{g_{i_k,x_k^*}'(t_{k,l})}{H\cdot \sqrt{|g_{i_k,x_k^*}'(t_{k,l})|} + g_{i_k,x_k^*}''(t_{k,l})} \]
with a fixed constant $H>0$. Also here, we stop if $|g_{i_k,x_k^*}'(t_{k,l})|<\epsilon$.
Convergence of the $t_{k,l}$ for $l\to\infty$ is guaranteed, if $\varphi^*$ is strongly convex, i.e. if $\varphi$ is everywhere finite with Lipschitz continuous gradient and the values $g_{i_k,x_k^*}(t_{k,l})$ are guaranteed to converge to the minimum value if $\varphi^*$ has Lipschitz continuous Hessian~\cite{Mishchenko21}. We have also observed good convergence for the negative entropy function on $\RR_{\geq 0}^d$ with this method when Newton's method is unstable. For problems constrained to the probability simplex $\Delta^{d-1}$, the globalized Newton method converged more slowly than the vanilla Newton method. For the problem in subsection~\ref{subsec:LSD} with $(r,m)=(3,100)$ we chose $H=0.1$. In addition, we performed a relaxed Bregman projection (line 10 of Algorithm \ref{alg:NBK}) with step size~\eqref{eqn:mSPS_like_stepsize} if $|t_{k,l}|>100$.

	\section*{Declarations}
\subsection*{Competing interests}
The authors have no competing interests to declare that are relevant to the content of this article.
\subsection*{Data availability}
We do not analyze or generate any datasets, because our work proceeds
within a theoretical and mathematical approach. However, the code that
generates the figures in this article can be found at \url{https://github.com/MaxiWk/Bregman-Kaczmarz}.

\bibliography{refs}

\bibliographystyle{abbrv}

\end{document}